\newtheorem{prop}{Proposition}
\newtheorem{lem}[prop]{Lemma}
\newtheorem{thm}[prop]{Theorem}
\newtheorem{theorem}{Theorem}[section]
\newtheorem{definition}[theorem]{Definition}
\theoremstyle{remark}
\theoremstyle{remark}
\numberwithin{equation}{section}
\newcommand{\Mbar}{\overline{\M}}
\newcommand{\proj}{\mathbb{P}}
\newcommand{\com}{\mathbb{C}}
\newcommand{\M}{\mathcal{M}}
\newcommand{\V}{\mathsf{V}}
\newcommand{\T}{\mathsf{T}}
\newcommand{\Hilb}{\mathsf{Hilb}^n(\mathbb{C}^2)}
\newcommand{\Sym}{\mathsf{Sym}^n}
\newcommand{\Id}{{\bf 1}}
\newcommand{\WWW}{{\mathcal{S}}}
\newcommand{\WW}{{\mathcal{R}}}
\newcommand{\hilbnc}{\mathsf{Hilb}^n(\mathbb{C}^2)}
\newcommand{\MM}{\mathsf{M}}
\newcommand{\cF}{\mathcal{F}}
\newcommand{\vac}{v_\emptyset}
\newcommand{\lv}{\left |}
\newcommand{\zz}{{\mathfrak{z}}}
\newcommand{\lang}{\left\langle}
\newcommand{\rang}{\right\rangle}
\newcommand{\blang}{\big\langle}
\newcommand{\brang}{\big\rangle}
\def\<{\left\langle}
\def\>{\right\rangle}
\newcommand{\RRR}{\mathsf{R}}
\newcommand{\oM}{\overline{\mathcal{M}}}
\newcommand{\RR}{\mathsf{A}}
\newcommand{\CC}{\mathbb{C}}
\newcommand{\Q}{\mathbb{Q}}
\newcommand{\ZZ}{\mathbb{Z}}
\newcommand{\cO}{\mathcal{O}}
\newcommand{\cI}{\mathcal{I}}
\DeclareMathOperator{\Aut}{Aut}
\DeclareMathOperator{\sgn}{sgn}
\def\ch{{\rm ch}}
\def\End{{\rm End}}
\def\b1{{\mathbf 1}}
\begin{document}

\title[Higher genus GW theory of $\Hilb$ and $\mathsf{CohFTs}$ associated to local curves]{Higher genus Gromov-Witten theory of $\Hilb$\\ and $\mathsf{CohFTs}$ associated to local curves}
\author[Pandharipande]{Rahul Pandharipande}
\address{Department of  Mathematics\\ ETH Z\"urich \\ R\"amistrasse 101 \\ 8092 Z\"urich\\ Switzerland}
\email{rahul@math.ethz.ch}

\author[Tseng]{Hsian-Hua Tseng}
\address{Department of Mathematics\\ Ohio State University\\ 100 Math Tower, 231 West 18th Ave. \\ Columbus,  OH 43210\\ USA}
\email{hhtseng@math.ohio-state.edu}

\date{May 2019}

\maketitle

{\centering \em Dedicated to A. Givental on the occasion of his 60th birthday\par}

\begin{abstract}
We study the higher genus equivariant Gromov-Witten theory of the
Hilbert scheme of $n$ points of $\mathbb{C}^2$. 
Since the equivariant quantum cohomology, computed in \cite{op},
is semisimple, the higher genus theory is determined by an
$\mathsf{R}$-matrix via the Givental-Teleman classification of 
Cohomological Field Theories
(CohFTs). We uniquely specify the required $\mathsf{R}$-matrix by explicit
data in degree $0$. As a consequence, we lift the basic triangle of
equivalences relating the equivariant
quantum cohomology of the Hilbert scheme $\Hilb$
and 
the Gromov-Witten/Donaldson-Thomas correspondence
for 3-fold theories of local curves to a triangle of
equivalences in all higher genera. 
The proof uses the analytic continuation of the
fundamental solution of the QDE of the Hilbert scheme of
points determined in \cite{op29}.
The GW/DT edge of the triangle in higher
genus concerns new CohFTs 
defined by varying the 3-fold local curve in the
moduli space of stable curves. 

The equivariant orbifold
Gromov-Witten theory of the symmetric product $\mathsf{Sym}^n(\mathbb{C}^2)$
is also shown to be equivalent to the theories of the
triangle in all genera. The result
establishes
a complete case of the crepant resolution conjecture \cite{bg,cit,cr}.
\end{abstract}

\setcounter{tocdepth}{1}
\tableofcontents

\setcounter{section}{-1}
\section{Introduction}
\subsection{Quantum cohomology}
The {\em Hilbert scheme} $\Hilb$
of $n$ points in the plane $\CC^2$ 
parameterizes ideals $\cI\subset \CC[x,y]$ of colength $n$,
$$
\dim_\CC {\CC[x,y]}/{\cI} = n \,. 
$$
The Hilbert scheme $\Hilb$ is a nonsingular, irreducible,
quasi-projective variety 
of  
dimension $2n$,
see
\cite{goe,Nak} for an introduction. 
An open dense set of $\Hilb$ parameterizes 
ideals associated to configurations of
$n$ distinct points.

The symmetries of $\CC^2$ lift to the Hilbert scheme. 
The algebraic torus 
$$\T=(\CC^*)^2$$ 
acts diagonally on $\CC^2$ by scaling coordinates,
$$
(z_1,z_2) \cdot (x,y) = (z_1 x, z_2 y)\, .
$$
The induced $\T$-action on $\Hilb$ will play basic
role here.

The Hilbert scheme  carries a 
tautological rank $n$ vector bundle, 
\begin{equation}\label{xx88}
\cO/\cI\rightarrow \Hilb\, ,
\end{equation}
with fiber
$\CC[x,y]/\cI$ over $[\cI]\in \Hilb$, see \cite{Lehn}.  
The $\T$-action on $\Hilb$ lifts canonically to the tautological bundle \eqref{xx88}.
Let $$
D = c_1 (\cO/\cI) \in H^2_T(\Hilb, {\mathbb Q})$$
be the $\T$-equivariant first Chern class.

The $\T$-equivariant quantum cohomology of $\Hilb$  has been determined  in
\cite{op}.
The {\em matrix elements} of the $\T$-equivariant quantum product 
 count{\footnote{The count is virtual.}} rational 
curves meeting three given 
subvarieties of $\Hilb$. 
The (non-negative) {\em degree} of an effective{\footnote{The $\beta=0$
is considered here effective.}} curve class 
$$\beta\in H_2(\Hilb,{\mathbb Z})$$ is defined by pairing with $D$,
 $$d=\int_\beta D.$$
Curves of degree $d$ are counted with 
weight $q^d$, where $q$ is the quantum parameter.
The ordinary multiplication in 
$\T$-equivariant cohomology is recovered by
setting $q=0$.

Let $\mathsf{M}_D$ denote the
operator of $\T$-equivariant quantum  multiplication by the 
divisor $D$. 
A central result of \cite{op} is an explicit formula for
 $\mathsf{M}_D$ an as operator on Fock space.

\subsection{Fock space formalism}\label{fsf}
We review the Fock space description of the $\T$-equivariant 
cohomology of the Hilbert scheme of points of $\CC^2$ following
the notation of \cite[Section 2.1]{op}, see also \cite{Gron, Nak}. 

By definition, the {\em Fock space} $\cF$ 
is freely generated over $\Q$ by commuting 
creation operators $\alpha_{-k}$, $k\in\ZZ_{>0}$,
acting on the vacuum vector $\vac$. The annihilation 
operators $\alpha_{k}$, $k\in\ZZ_{>0}$, kill the vacuum 
$$
\alpha_k \cdot \vac =0,\quad k>0 \,,
$$
and satisfy the commutation relations
$$
\left[\alpha_k,\alpha_l\right] = k \, \delta_{k+l}\,. 
$$

A natural basis of $\cF$ is given by 
the vectors  
\begin{equation}
  \label{basis}
  \lv \mu \rang = \frac{1}{\zz(\mu)} \, \prod_i \alpha_{-\mu_i} \, \vac \,
\end{equation}
indexed by partitions 
$\mu$. Here, $$\zz(\mu)=|\Aut(\mu)| \, \prod_i \mu_i$$ is the usual 
normalization factor. 
Let the length $\ell(\mu)$ denote the number of 
parts of the partition $\mu$.

The {\em Nakajima basis} defines a canonical isomorphism,
\begin{equation}
\cF \otimes _{\mathbb Q} {\mathbb Q}[t_1,t_2]\stackrel{\sim}{=} 
\bigoplus_{n\geq 0} H_{\T}^*(\Hilb,{\mathbb Q}).
\label{FockHilb}
\end{equation}
The Nakajima basis element corresponding to  
$\lv \mu \rang$  is
$$\frac{1}{\Pi_i \mu_i} [V_\mu]$$
where $[V_\mu]$ is (the cohomological dual of)
the class of the subvariety of $\mathsf{Hilb}^{|\mu|}(\CC^2)$
with generic element given by a union of 
schemes of lengths $$\mu_1, \ldots, \mu_{\ell(\mu)}$$ supported
at $\ell(\mu)$ distinct points{\footnote{The points and
parts of $\mu$ are considered
here to be unordered.}} of $\CC^2$. 
The vacuum vector $\vac$ corresponds to the unit in 
$$\mathsf{1}\in H_\T^*(\mathsf{Hilb}^0(\CC^2), {\mathbb Q})\, .$$
The variables $t_1$ and $t_2$ are the equivariant parameters corresponding
to the weights of the $\T$-action on the tangent
space 
$\text{Tan}_0(\CC^2)$ 
at the origin of $\CC^2$.

The subspace of $\cF\otimes_{\mathbb Q} {\mathbb Q}[t_1,t_2]$ corresponding to $H^*_\T(\Hilb,{\mathbb Q})$
is spanned by the vectors \eqref{basis} with $|\mu|=n$. The subspace can also 
be described as the $n$-eigenspace of the {\em  energy operator}: 
$$
|\cdot| = \sum_{k>0} \alpha_{-k} \, \alpha_k \,.
$$
The vector $\lv 1^n \rang$ corresponds to the unit 
$$\mathsf{1}\in H^*_\T(\Hilb,{\mathbb Q})\, .$$ 
A straightforward calculation shows 
\begin{equation}\label{DDDD}
D = - \lv 2,1^{n-2} \rang \,.
\end{equation}

The standard inner product on the $\T$-equivariant cohomology of
$\Hilb$ 
induces the following 
{\em nonstandard} inner product on Fock space after an extension of scalars:
\begin{equation}
  \label{inner_prod}
  \lang \mu | \nu \rang = 
\frac{(-1)^{|\mu|-\ell(\mu)}}{(t_1 t_2)^{\ell(\mu)}} 
\frac{\delta_{\mu\nu}}{\zz(\mu)} \,. 
\end{equation}
With respect to the inner product, 
\begin{equation}
  \label{adjoint}
  \left(\alpha_{k}\right)^* = (-1)^{k-1} (t_1 t_2)^{\sgn(k)} \, 
\alpha_{-k}\,.
\end{equation}

\subsection{Quantum multiplication by $D$}
\label{t1}

The formula of \cite{op} for the operator $\MM_D$ of  
quantum multiplication by $D$ is:
\begin{multline*}
%  \label{theM} 
\MM_D(q,t_1,t_2) = (t_1+t_2) \sum_{k>0} \frac{k}{2} \frac{(-q)^k+1}{(-q)^k-1} \,
 \alpha_{-k} \, \alpha_k\ \, -\, \frac{t_1+t_2}{2}\frac{(-q)+1}{(-q)-1}|\cdot |
 \\
 +  
\frac12 \sum_{k,l>0} 
\Big[t_1 t_2 \, \alpha_{k+l} \, \alpha_{-k} \, \alpha_{-l} -
 \alpha_{-k-l}\,  \alpha_{k} \, \alpha_{l} \Big] 
 \, .
\end{multline*}
The $q$-dependence of $\MM_D$ occurs only in the first two terms
(which acts diagonally in the basis \eqref{basis}). The two parts
of
the last sum are known respectively as the splitting and joining terms. 

Let $\mu^1$ and $\mu^2$ be partitions of $n$. The  
$\T$-equivariant Gromov-Witten invariants of  $\Hilb$ in genus $0$ 
with 3 cohomology insertions 
given (in the Nakajima basis) by $\mu^1$, $D$, and $\mu^2$ are determined by $\MM_D$:
$$\sum_{d=0}^\infty \blang \mu^1, D, \mu^2 \brang_{0,d}^{\Hilb}\, q^d\ =\ 
\big\langle \mu^1\, \big| \, \MM_D  \, \big|\,  \mu^2 \big\rangle\, .$$
Equivalently, denoting the 2-cycle $(2,1^n)$ by $(2)$, we have
\begin{equation} \label{v233}
\sum_{d=0}^\infty \blang \mu^1, (2), \mu^2 \brang_{0,d}^{\Hilb}\, q^d\ =\ 
\big\langle \mu^1\, \big| \, -\MM_D  \, \big|\,  \mu^2 \big\rangle\, .
\end{equation}

%By \cite[Corollary ?]{op},  
%The divisor class $D$ generates the small quantum ring
%$$QH^*_T(\Hilb,{\mathbb Q})$$ 
%over $\Q(q,t_1,t_2)$.  The small quantum ring structure
%is therefore determined. 

Let $\mu^1,\ldots, \mu^r\in \text{Part}(n)$. 
The $\T$-equivariant Gromov-Witten series in genus $g$,
$$\blang \mu^1, \mu^2, \ldots, \mu^r \brang_{g}^{\Hilb}
\, =\,  \sum_{d=0}^\infty 
\blang \mu^1, \mu^2, \ldots, \mu^r \brang_{g,d}^{\Hilb} q^d
\, \in \mathbb{Q}[[q]]\, ,$$
is a sum over the degree $d$ with variable $q$.
The $\T$-equivariant Gromov-Witten series in genus 0,
$$\blang \mu^1, \mu^2, \ldots, \mu^r \brang_{0}^{\Hilb}
= \sum_{d=0}^\infty 
\blang \mu^1, \mu^2, \ldots, \mu^r \brang_{0,d}^{\Hilb} q^d\, ,
$$
can be calculated from the special 3-point
invariants \eqref{v233}, see  \cite[Section 4.2]{op}.

\subsection{Higher genus}
Our first result here is a determination of the $\T$-equivariant Gromov-Witten
theory of $\mathsf{Hilb}(\mathbb{C}^2,d)$ in {\em all} higher genera $g$.
We will
use the Givental-Teleman classification of semisimple Cohomological
Field Theories (CohFTs).  The Frobenius structure determined
by the $\T$-equivariant genus $0$ theory of $\mathsf{Hilb}(\mathbb{C}^2,d)$ 
is semisimple, but {\em not}
conformal. Therefore, the $\mathsf{R}$-matrix is {\em not} determined by the
$\T$-equivariant genus $0$ theory alone. 
Fortunately,  together with the divisor equation, an evaluation of the
$\T$-equivariant higher genus theory in degree $0$ is enough
to uniquely determine the $\mathsf{R}$-matrix. 

Let
$\text{Part}(n)$ be the set of of partitions
of $n$ corresponding to the $\T$-fixed
points of $\Hilb$. For each $\eta\in \text{Part}(n)$,  let
 $\text{Tan}_\eta(\Hilb)$ be the $\T$-representation on
 the tangent space at the $\T$-fixed point
 corresponding to $\eta$.
Let
$$\mathbb{E}_g\rightarrow \oM_g$$
be the Hodge bundle of differential forms over the moduli space of
stable curves of genus $g$.

\begin{thm}\label{ff11} The $\mathsf{R}$-matrix 
of the $\T$-equivariant Gromov-Witten theory of $\Hilb$ is uniquely determined from the $\T$-equivariant genus 0 theory by the divisor equation
and the degree 0 invariants
\begin{eqnarray*}
\big\langle \mu \big\rangle_{1,0}^{\Hilb} &=& \sum_{\eta\in \text{\em Part}(n)} \mu|_\eta \int_{\oM_{1,1}}
\frac{e\left(\mathbb{E}_1^* \otimes \text{\em Tan}_\eta(\Hilb)\right)}
{e\left(\text{\em Tan}_\eta(\Hilb)\right)
}\, , 
\\
\big\langle \, \big\rangle_{g\geq 2,0}^{\Hilb} &=& \sum_{\eta\in \text{\em Part}(n)} \int_{\oM_g}
\frac{e\left(\mathbb{E}_g^* \otimes \text{\em Tan}_\eta(\Hilb)\right)}
{e\left(\text{\em Tan}_\eta(\Hilb)\right)
}\,
.
\end{eqnarray*}
\end{thm}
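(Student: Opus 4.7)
The approach is to combine the Givental--Teleman classification of semisimple CohFTs with additional constraints coming from the divisor equation and from $\T$-equivariant localization in degree $0$.

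By the explicit formula for $\MM_D$ of \cite{op}, the $\T$-equivariant genus $0$ Frobenius structure on $\Hilb$ is generically semisimple. Teleman's classification then guarantees that the full higher-genus CohFT is obtained from this Frobenius structure by acting with a unique R-matrix $R(z,q)\in\mathrm{End}(\cF_n)[[z]]$ satisfying the symplectic condition $R(z)R^*(-z)=\bone$. Givental's recursion characterizes $R$ as the asymptotic expansion in canonical coordinates of the fundamental solution to the quantum differential equation. Because the equivariant parameters $t_1$ and $t_2$ enter $\MM_D$ asymmetrically, there is no Euler field, and the recursion leaves a standard gauge ambiguity: at each odd order in $z$, the coefficient $R_k$ is determined by the genus $0$ data only up to a diagonal matrix (in the canonical basis) with entries in $\Q(t_1,t_2)[[q]]$.

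The divisor equation for $D$, together with the fact that $q\partial_q$ acts as quantum multiplication by $-D$ on the fundamental solution, converts this diagonal ambiguity into a first-order linear ODE in $q$ for the unknown diagonal entries. Once the diagonal entries are specified at $q=0$, the ODE determines them for all $q$ in a formal neighborhood of $0$, and hence determines all of $R$.

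The required initial conditions at $q=0$ are supplied by the degree $0$ evaluations. There the $\T$-equivariant GW theory of $\Hilb$ reduces to its degree $0$ part, which by virtual $\T$-localization at the $\T$-fixed points $\eta\in\text{Part}(n)$ equals a sum of twisted Hodge integrals
$$\int_{\oM_{g,r}}\frac{e(\mathbb{E}_g^*\otimes\text{Tan}_\eta(\Hilb))}{e(\text{Tan}_\eta(\Hilb))}\,,$$
weighted by restrictions $\mu|_\eta$ of insertions. Specializing to the $g=1$, one-point and $g\geq 2$, zero-point cases yields the formulas in the theorem, and by the Teleman--Givental reconstruction at $q=0$ these evaluations are precisely the scalar data required to pin down the diagonal constants of integration.

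The main obstacle will be the counting in the divisor-equation step: one must verify that the divisor equation provides exactly the right number of scalar relations to collapse the residual R-matrix ambiguity at each odd order to a single constant of integration at $q=0$, no more and no less. This requires tracking in parallel the gauge freedom of $R$ at each order and the mode decomposition of the divisor equation, and then globalizing the argument from formal to analytic $q$ using the analytic continuation of the fundamental solution established in \cite{op29}.
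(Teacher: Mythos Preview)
Your overall strategy matches the paper's: use Givental--Teleman, observe the non-conformal ambiguity is a diagonal exponential at odd $z$-orders, use the divisor equation to force that ambiguity to be $q$-independent, and fix the remaining constants by the $q=0$ (degree~$0$) CohFT, computed by $\T$-localization as Hodge integrals.

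Two points deserve correction. First, your framing of the divisor-equation step is slightly off. The paper does not produce an ODE in $q$ for the unknown diagonal entries and then integrate it. Rather, the divisor equation on the full CohFT implies directly that $\mathsf{R}^{\mathsf{Hilb}}$ satisfies
\[
-\frac{\partial}{\partial t}\,\mathsf{R}^{\mathsf{Hilb}} \;=\; q\frac{\partial}{\partial q}\,\mathsf{R}^{\mathsf{Hilb}},
\]
and then a short product-rule computation (Proposition~\ref{kkqq22}) shows that any two $\mathsf{R}$-matrices for $(\mathsf{V},\star,\eta)$ satisfying this same equation differ by $\exp\bigl(\sum_{j\ge 1}\mathsf{a}_{2j-1}z^{2j-1}\bigr)$ with each $\mathsf{a}_{2j-1}$ diagonal and lying in $\mathbb{Q}(t_1,t_2)$, i.e.\ already $q$-independent. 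There is no ``counting of relations'' to carry out: the argument is a two-line differentiation, and your worry about matching the number of constraints to the gauge freedom is a red herring.

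Second, the invocation of the analytic continuation from \cite{op29} is misplaced here. Theorem~\ref{ff11} is a purely formal statement over $\mathsf{A}=\mathbb{Q}(t_1,t_2)[[q]]$: once the diagonal ambiguity is known to be $q$-constant, the $q=0$ restriction (Proposition~\ref{vv44}) pins it down, and that restriction is the degree~$0$ CohFT computed by localization. The analytic continuation of the QDE solution enters only later, in matching $\mathsf{R}^{\mathsf{Hilb}}$ at $q=-1$ with $\mathsf{R}^{\mathsf{Sym}}$ at $u=0$ for Theorems~\ref{tt22} and~\ref{crc}; it plays no role in the proof of Theorem~\ref{ff11}.
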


The insertion $\mu$ in the genus 1 invariant
is in the Nakajima basis, and 
$\mu|_\eta$ denotes the restriction to
the $\T$-fixed point corresponding
to $\eta$ --- which can be calculated
from the 
Jack polynomial $\mathsf{J}^\eta$.
The integral of the Euler class $e$ in the formula
 may be explicitly expressed in terms of
Hodge integrals and the tangent weights of the $\T$-representation
$\text{Tan}_\eta(\Hilb)$.

Apart from $\big\langle \mu \big\rangle_{1,0}^{\Hilb}$ and $\big\langle \, \big\rangle_{g\geq 2,0}^{\Hilb}\,$ , all other degree 0 invariants 
of $\Hilb$ in positive genus vanish. Theorem \ref{ff11} can be equivalently stated in the following
form: {\em the $\mathsf{R}$-matrix 
of the $\T$-equivariant Gromov-Witten theory of $\Hilb$ is uniquely determined from the $\T$-equivariant genus 0 theory by the divisor equation
and the degree 0 invariants in positive genus}.

\subsection{Lifting the triangle of correspondences}\label{lifttr}
The calculation of the $\T$-equivariant quantum cohomology of $\Hilb$
is a basic step in the proof  \cite{bp,op,op2} of
the following triangle of equivalences:
\begin{center}
\begin{tikzpicture}
 \draw[very thick]
   (0,0) node[below left, align = center]{Gromov-Witten theory\\of $\mathbb{C}^2 \times 
\mathbb{P}^1$}
-- (1,1.732) node[above, align = center]{Quantum cohomology\\of 
$\Hilb$}
-- (2,0) node[below right, align = center]{Donaldson-Thomas theory\\of $\mathbb{C}^2 \times 
\mathbb{P}^1$}
-- (0,0);
\end{tikzpicture}
\end{center}

Our second result is a lifting of the above triangle to all higher genera.
The top vertex is replaced by the {\em 
$\T$-equivariant Gromov-Witten theory of $\Hilb$
in genus $g$ with $r$ insertions}.
The bottom vertices of the triangle are {\em new} theories which are constructed here.

Let $\overline{\mathcal{M}}_{g,r}$ be the moduli space of Deligne-Mumford stable
curves of genus $g$ with $r$ markings.{\footnote{We will always assume 
$g$ and $r$ satisfy the {\em stability} condition $2g-2+r>0$.}}
Let
$$ \mathcal{C} \rightarrow \overline{\mathcal{M}}_{g,r}\, $$
be the universal curve with sections
$$\mathsf{p}_1, \ldots, \mathsf{p}_r : \overline{\mathcal{M}}_{g,r}\rightarrow
\mathcal{C}$$
associated to the markings.
Let 
$$\pi: \mathbb{C}^2\times \mathcal{C} \rightarrow \overline{\mathcal{M}}_{g,r}$$
be the universal {\em local curve} over $\overline{\mathcal{M}}_{g,r}$. The torus $\T$ acts on
the $\com^2$ factor.
The Gromov-Witten and Donaldson-Thomas theories  
of the morphism $\pi$ are defined by the $\pi$-relative $\T$-equivariant virtual class
of the universal $\pi$-relative
 moduli spaces of stable maps and stable pairs.{\footnote{While the triangle
 of equivalences originally included the Donaldson-Thomas theory of
 ideal sheaves, the theory of stable pairs \cite{PT} is much better behaved, see
 \cite[Section 5]{mpt} for a discussion valid for $\com^2 \times \proj^1$.
 We will use the theory of stable pairs here.}}

\begin{thm} \label{tt22}
For all genera $g\geq 0$, there is a triangle of 
equivalences of $\T$-equivariant theories:
\begin{center}
\begin{tikzpicture}
 \draw[very thick]
   (0,0) node[below left, align = center]{\em Gromov-Witten theory of\\ $\pi:\mathbb{C}^2 \times 
\mathcal{C} \rightarrow
\overline{\mathcal{M}}_{g,r}$}
-- (1,1.732) node[above, align = center]{\em Gromov-Witten theory of $\Hilb$\\
\em in genus $g$ with $r$ insertions}
-- (2,0) node[below right, align = center]{\em Donaldson-Thomas theory of\\ $\pi: \mathbb{C}^2 \times 
\mathcal{C} \rightarrow\overline{\mathcal{M}}_{g,r}$ }
-- (0,0);
\end{tikzpicture}
\end{center}
\end{thm}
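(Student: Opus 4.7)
The plan is to deduce Theorem \ref{tt22} from Theorem \ref{ff11} using the Givental-Teleman classification of semisimple CohFTs. First, I would establish that each vertex of the triangle defines a CohFT on the state space $H^*_\T(\Hilb,\mathbb{Q})$ with values in $H^*(\oM_{g,r},\mathbb{Q})$ (after suitable extension of scalars in the equivariant and quantum parameters). For the $\T$-equivariant GW theory of $\Hilb$ this is standard. For the $\pi$-relative GW and DT theories over the universal local curve $\pi:\com^2\times \mathcal{C}\to\oM_{g,r}$, classes on $\oM_{g,r}$ arise by pushing forward the $\pi$-relative virtual class on the moduli of stable maps or stable pairs with boundary conditions imposed along the sections $\mathsf{p}_1,\ldots,\mathsf{p}_r$; the CohFT axioms then follow by fiberwise application of the standard degeneration/gluing formulas for these theories.

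The genus-$0$ triangle \cite{bp,op,op2}, together with the analytic continuation of the fundamental solution of the QDE supplied by \cite{op29}, identifies the three Frobenius structures. All three are therefore semisimple with the same divisor operator $\MM_D$, so Teleman's theorem assigns to each CohFT a unique $\mathsf{R}$-matrix $\mathsf{R}(z)$ relative to this common Frobenius manifold. Theorem \ref{tt22} becomes the assertion that these three $\mathsf{R}$-matrices coincide.

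To establish the coincidence, I would apply Theorem \ref{ff11}: its uniqueness argument uses only the semisimple Frobenius structure, the operator $\MM_D$, and the divisor equation, all of which are intrinsic to the triangle and therefore shared by the three vertices. Accordingly the same characterization pins down the $\mathsf{R}$-matrix at each vertex, and it suffices to check that the degree-$0$ invariants in positive genus agree across the three theories. On the local-curve sides, ``degree $0$'' refers to stable maps or stable pairs whose image in $\com^2\times \mathcal{C}$ lies fiberwise in the zero section over $\oM_{g,r}$. Virtual $\T$-localization reduces these integrals to a sum over $\T$-fixed loci indexed by $\eta\in\text{Part}(n)$; each $\eta$ contributes
$$
\int_{\oM_g}\frac{e\bigl(\mathbb{E}_g^* \otimes \text{Tan}_\eta(\Hilb)\bigr)}{e\bigl(\text{Tan}_\eta(\Hilb)\bigr)}\,,
$$
with the Hodge bundle $\mathbb{E}_g$ arising from the cotangent deformations of the constant maps/pairs and $\text{Tan}_\eta(\Hilb)$ appearing as the normal bundle of the fixed locus, matching exactly the formula of Theorem \ref{ff11}; the genus-$1$ case with one insertion is analogous.

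The main obstacle I anticipate is twofold. First, one must verify that the $\pi$-relative GW and DT constructions really do define CohFTs over $\oM_{g,r}$ --- the delicate point being the compatibility of the $\pi$-relative virtual class with the boundary restriction maps $\oM_{g_1,r_1+1}\times \oM_{g_2,r_2+1}\to\oM_{g,r}$, which rests on a family version of the GW/DT degeneration formula for $\com^2\times\Pp$ in the spirit of \cite{mpt}. Second, the degree-$0$ virtual localization on the GW and DT sides must be performed so that the Hodge contribution $e(\mathbb{E}_g^*\otimes \text{Tan}_\eta)$ emerges correctly from the deformation--obstruction theory of constant maps/pairs; the two bottom vertices must additionally be matched against each other via the GW/DT correspondence in degree $0$ before being matched against the top vertex. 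Handling these two points rigorously is the technical heart of the argument.
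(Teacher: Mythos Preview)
Your argument for the right edge of the triangle (GW of $\Hilb$ $\leftrightarrow$ DT of the local curve) is essentially the paper's: both CohFTs live over $\mathsf{A}=\mathbb{Q}(t_1,t_2)[[q]]$, both satisfy the divisor equation with respect to $q$, and the $q=0$ restrictions match by a direct comparison of moduli spaces and obstruction bundles (the paper uses the holomorphic symplectic identity $\mathsf{Tan}(\Hilb)^*\cong\mathsf{Tan}(\Hilb)\otimes[-t_1-t_2]$ to turn the stable-pairs obstruction into the constant-map obstruction). This is Proposition~\ref{pp15}.

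The genuine gap is your treatment of the left edge. The local GW CohFT $\widetilde{\Lambda}$ is a power series in $u$, where $u$ counts \emph{branch points}; it lives over $\widetilde{\mathsf{A}}=\mathbb{Q}(t_1,t_2)[[u]]$ and carries the pairing $\widetilde{\eta}$, not $\eta$. Its natural ``degree~$0$'' restriction is $u=0$, i.e.\ \emph{\'etale} covers, and the resulting invariants are Hurwitz--Hodge integrals over compactified Hurwitz spaces $\overline{\mathcal{H}}^\eta_g$ (with Hodge bundles of the \emph{covering} curves and tangent weights at the origin of $\mathbb{C}^2$), not the fixed-point Hodge integrals of Theorem~\ref{ff11}. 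Your localization formula for the GW side is therefore wrong. Under $-q=e^{iu}$ the point $q=0$ corresponds to $u\to i\infty$, so there is no direct restriction of $\widetilde{\Lambda}$ at $q=0$ to compare with.

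The paper resolves this by passing through the fourth vertex $\Sym(\mathbb{C}^2)$: it matches $\widetilde{\Lambda}$ with $\widetilde{\Omega}$ directly at $u=0$ (Proposition~\ref{pp12}, using the analogue Theorem~\ref{4444}), matches $\Lambda$ with $\Omega$ directly at $q=0$ (Proposition~\ref{pp15}, using Theorem~\ref{ff11}), and then matches $\Omega$ with $\widetilde{\Omega}$ by showing $\mathsf{R}^{\mathsf{Hilb}}|_{q=-1}=\mathsf{R}^{\mathsf{Sym}}|_{u=0}$. This last equality is the technical heart: it requires realizing $\mathsf{R}^{\mathsf{Hilb}}$ as the asymptotic expansion of an actual solution of the QDE, analytically continuing that solution along a path from $q=0$ to $q=-1$ using \cite{op29}, and then computing the asymptotics at $q=-1$ via Macdonald polynomials. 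Your proposal invokes \cite{op29} only for genus~$0$; the point is that the analytic continuation must be carried out at the level of the $\mathsf{R}$-matrix itself, and this is where the real work lies.
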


The triangle of Theorem \ref{tt22} may be viewed from different
perspectives.
First, all three vertices define CohFTs. Theorem \ref{tt22} may be
stated as simply an isomorphism of the three CohFTs.
A second point of view of the
bottom side of the triangle of Theorem \ref{tt22} is
as a GW/DT correspondence in {\em families of 3-folds} as
the complex structure of the local curve varies.
While a general GW/DT correspondence for families of 3-folds can be
 naturally formulated, there are very few
interesting{\footnote{The equivariant GW/DT correspondence
is a special case of the GW/DT correspondence for
families (and is well studied).}} cases studied.

For a pointed curve of fixed complex structure
%\begin{equation}%\label{pqq2q}
$(C,p_1,\ldots,p_r)$, 
%\end{equation}
the
triangle of Theorem \ref{tt22} is a basic result of
the papers \cite{bp,op,op2} since
$C$ can be degenerated to a curve with
all irreducible components of genus 0.

\subsection{Crepant resolution}
The Hilbert scheme of points of $\mathbb{C}^2$  is well-known to be a crepant resolution
of the symmetric product,
$$\epsilon:\Hilb \ \rightarrow\ {\Sym}(\mathbb{C}^2)= (\mathbb{C}^2)^n/S_n, .$$
Viewed as an {\em orbifold}, the symmetric product $\Sym(\mathbb{C}^2)$ has
a $\T$-equivariant
Gromov-Witten theory with insertions indexed by partitions of $n$. 
The $\T$-equivariant Gromov-Witten generating series{\footnote{A full
discussion of the definition
appears in Section \ref{diveqq}.}},
$$\blang \mu^1, \mu^2, \ldots, \mu^r \brang_{g}^{{\Sym}(\mathbb{C}^2)}
\, =\,  \sum_{b=0}^\infty 
\blang \mu^1, \mu^2, \ldots, \mu^r \brang_{g,b}^{{\Sym}(\mathbb{C}^2)} u^b
\, \in \mathbb{Q}[[u]]\, ,$$
is a sum over the number of
free ramification points $b$ with variable $u$.

The Frobenius structure determined
by the $\T$-equivariant genus $0$ theory of ${{\Sym}(\mathbb{C}^2)}$ 
is semisimple, but {\em not}
conformal. Again, the $\mathsf{R}$-matrix is {\em not} determined by the
$\T$-equivariant genus $0$ theory alone. 
The determination of the $\RRR$-matrix of 
$\Sym(\mathbb{C}^2)$
 is given by the following result parallel to Theorem \ref{ff11}.

\begin{thm}\label{4444} The $\mathsf{R}$-matrix 
of the $\T$-equivariant Gromov-Witten theory of $\Sym(\mathbb{C}^2)$ is uniquely determined from the $\T$-equivariant genus 0 theory by the divisor equation
and all the unramified invariants, 
$$\big\langle \mu^1, \ldots, \mu^r \big\rangle^{\Sym(\mathbb{C}^2)}_{g,0}\, ,$$
in positive genus.
\end{thm}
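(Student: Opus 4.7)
The plan is to follow the strategy of the proof of Theorem \ref{ff11}, with the curve degree $d$ on $\Hilb$ replaced by the number $b$ of free ramification points. Since the $\T$-equivariant genus $0$ theory of $\Sym(\com^2)$ is semisimple, the Givental-Teleman classification reconstructs the full CohFT from the genus $0$ Frobenius structure together with an $\mathsf{R}$-matrix
$$\mathsf{R}(u) = \mathrm{Id} + \sum_{k\geq 1}\mathsf{R}_k(u)\, z^k\, , \qquad \mathsf{R}(z)\,\mathsf{R}^*(-z)= \mathrm{Id}\, .$$
Because the Frobenius structure is not conformal, the genus $0$ theory alone leaves a residual freedom at each order in $z$---one scalar parameter per canonical idempotent---and the task is to show that the divisor equation together with the unramified positive-genus invariants removes exactly this freedom.

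First, I would use the divisor equation to reduce to $u=0$. On the semisimple locus, in canonical coordinates and the idempotent basis, the divisor equation for the orbifold divisor insertion (the age-$1$ sector associated to $(2,1^{n-2})$) translates, exactly as in the Hilb case, into a first-order ODE in $u$ for $\mathsf{R}(u)$ whose coefficient is (a shift by $z$ of) quantum multiplication by that divisor. The ODE propagates $\mathsf{R}(u)$ uniquely from $u=0$ throughout its domain of convergence, so it suffices to determine $\mathsf{R}(0)$.

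Next, I would pin down $\mathsf{R}(0)$ using the Givental-Teleman graph-sum reconstruction: each unramified positive-genus invariant
$$\big\langle \mu^1, \ldots, \mu^r \big\rangle^{\Sym(\com^2)}_{g,0}$$
is a universal polynomial in the coefficients of $\mathsf{R}(0)$ and the (already known) genus $0$ data. Requiring the graph sum to reproduce all the given values imposes linear conditions on the free scalar parameters in $\mathsf{R}(0)$.

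The hard part will be verifying that these conditions determine $\mathsf{R}(0)$ uniquely. The approach is a recursive, order-by-order analysis in $z$: for each idempotent and each order $k \geq 1$, exhibit an unramified invariant of suitable genus and insertion pattern in which $\mathsf{R}_k(0)$ appears through a minimal stable graph with an explicit, nonvanishing coefficient---computable as a Hodge-type integral on $\oM_{g,r}$ paralleling the integrals appearing in Theorem \ref{ff11}. Solving the resulting triangular system recursively in $k$ recovers every $\mathsf{R}_k(0)$, and combining with the ODE of the first step completes the proof.
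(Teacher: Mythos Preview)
Your overall architecture matches the paper's: first use the divisor equation to reduce the determination of $\mathsf{R}^{\mathsf{Sym}}$ to its value at $u=0$, then argue that the $u=0$ value is fixed by the unramified invariants. The paper carries out the first step via the shifted-CohFT calculation leading to the relation $\frac{\partial}{\partial\widetilde{t}}\,\mathsf{R}^{\mathsf{Sym}}=\frac{\partial}{\partial u}\,\mathsf{R}^{\mathsf{Sym}}$ and Proposition~\ref{prop:R_mat_sym}, which shows the residual ambiguity is diagonal and $u$-independent; this is essentially your ``propagate from $u=0$'' step, though your description of the ODE is a bit imprecise (it is a relation between the $\widetilde{t}$- and $u$-derivatives, not literally an ODE in $u$ with coefficient given by quantum multiplication).

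Where you diverge is in the second step. You propose to extract $\mathsf{R}(0)$ by a recursive, order-by-order graph-sum analysis, exhibiting for each idempotent and each $k$ an unramified invariant in which $\mathsf{R}_k(0)$ appears with nonzero coefficient. The paper bypasses this entirely with a single observation: the collection of unramified invariants $\langle\mu^1,\ldots,\mu^r\rangle_{g,0}^{\Sym(\com^2)}$ \emph{is} the restricted CohFT $\widetilde{\Omega}_{g,r}\big|_{u=0}$, which is itself semisimple over $\mathbb{Q}(t_1,t_2)$. Givental--Teleman uniqueness applied to this restricted CohFT then gives a unique $\mathsf{R}$-matrix, and that matrix is $\mathsf{R}^{\mathsf{Sym}}\big|_{u=0}$. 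So rather than solving a triangular system by hand, the paper invokes the black-box uniqueness statement one more time. Your recursive approach is not wrong---it is essentially re-proving a special case of Givental--Teleman uniqueness---but it is unnecessary work, and carrying it out rigorously (especially verifying the nonvanishing of the relevant coefficients) would be the genuinely delicate part of your argument.
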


For each $\eta \in \text{Part}(n)$,
Let $\mathcal{H}^\eta_g$ be the moduli
space \'etale covers with $\ell(\eta)$
connected components{\footnote{For
the definition of $\overline{\mathcal{H}}_g^\eta$,
we consider the parts of $\eta$ to
be {\em ordered} and in 
bijection with the components
of the cover.}} of
degrees
$$\eta_1, \ldots, \eta_{\ell(\eta)}$$
of a nonsingular genus $g\geq 2$ curve.
The degree of the map
$$\mathcal{H}^\eta_g \rightarrow \mathcal{M}_g$$
is an unramified Hurwitz number.
Let 
$$\overline{\mathcal{H}}^\eta_g \rightarrow \oM_g$$ 
be the compactification by 
admissible covers. 

Since
the genus of  the component  
of the cover corresponding to the
part $\eta_i$ is
$\eta_i(g-1)+1$, there is a
Hodge bundle
$$\mathbb{E}^*_{\eta_i(g-1)+1} \rightarrow
\overline{\mathcal{H}}_g^\eta$$
obtained via the corresponding component.

The simplest unramified invariants required in Theorem \ref{4444}
are
$$\big\langle \, \big\rangle_{g\geq 2,0} ^
{\Sym(\mathbb{C}^2)}= \sum_{\eta\in \text{Part}(n)} 
\frac{1}{|\text{Aut}(\eta)|}\,
\int_{\overline{\mathcal{H}}_g^{\eta}}\,
\prod_{i=1}^{\ell(\eta)}\,
\frac{e\big( \mathbb{E}^*_{\eta_i(g-1)+1} \otimes
\text{Tan}_0(\mathbb{C}^2)\big)}
{e\left(\text{Tan}_0(\mathbb{C}^2)\right)}\, .
$$
However, there are many further unramified invariants in positive genus
obtained by including insertions. Unlike the case of 
$\Hilb$, the unramified invariants with insertions for
$\Sym(\mathbb{C}^2)$ do not all vanish.

In genus $0$, the equivalence of the $\T$-equivariant
Gromov-Witten theories of $\Hilb$ and the orbifold ${\Sym}(\mathbb{C}^2)$
was proven{\footnote{The
prefactor
$(-i)^{\sum_{i=1}^r \ell(\mu^i)-|\mu^i|}$
was treated incorrectly in
\cite{bg} because of an
arthimetical error. The
prefactor here is correct.}}
in \cite{bg}. Our fourth result is a proof of
the equivalence for all genera.

\begin{thm}\label{crc} For all genera $g\geq 0$ and 
$\mu^1,\mu^2,\ldots,\mu^r\in \text{\em Part}(n)$, we have  
$$\blang \mu^1, \mu^2, \ldots, \mu^r \brang_{g}^{\Hilb} =
(-i)^{\sum_{i=1}^r \ell(\mu^i)-|\mu^i|}\blang \mu^1, \mu^2, \ldots, \mu^r \brang_{g}^{{\Sym}(\mathbb{C}^2)}$$
 after the variable change $-q=e^{iu}$.
\end{thm}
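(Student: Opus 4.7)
The plan is to combine the Givental--Teleman classification of semisimple CohFTs with Theorems \ref{ff11} and \ref{4444} to reduce the crepant resolution equality to a genus $0$ equivalence together with a matching of certain distinguished positive-genus invariants. Both $\T$-equivariant theories define semisimple (but non-conformal) CohFTs on the same graded vector space, indexed in a standard basis by $\text{Part}(n)$. Each is therefore reconstructed from its genus $0$ Frobenius structure together with an $\RRR$-matrix, so the theorem will follow once both ingredients are matched under the substitution $-q=e^{iu}$ and the prefactor $(-i)^{\sum_{i=1}^r \ell(\mu^i)-|\mu^i|}$.

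\textbf{Genus $0$ and reduction of the $\RRR$-matrix comparison.} For the Frobenius structures I would invoke the genus $0$ crepant resolution theorem of Bryan--Graber \cite{bg} (with the corrected constant noted in the introduction of the present paper). This identifies the two semisimple Frobenius manifolds under the prescribed substitution, so canonical coordinates, normalized idempotents, and pairings agree. For the $\RRR$-matrices, Theorem \ref{ff11} says that the $\RRR$-matrix of $\Hilb$ is uniquely determined from the genus $0$ theory by the divisor equation for $D$ together with the positive-genus degree $0$ invariants, while Theorem \ref{4444} says the $\RRR$-matrix of $\Sym(\CC^2)$ is uniquely determined from its genus $0$ theory by its divisor equation together with the positive-genus unramified invariants. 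Under the variable change, the divisor $D=-\lv 2,1^{n-2}\rang$ on $\Hilb$ corresponds (with the correct $(-i)$-normalization) to the $(2,1^{n-2})$-twisted sector class on $\Sym(\CC^2)$, so the two divisor equations are compatible. It therefore suffices to match the positive-genus degree $0$ invariants of $\Hilb$ with the positive-genus unramified invariants of $\Sym(\CC^2)$.

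\textbf{Main obstacle: the Hodge integral identity.} The remaining step is an identity between two families of $\T$-equivariant Hodge integrals. On the $\Hilb$ side, a degree $0$ stable map is constant at a $\T$-fixed point $\eta\in\text{Part}(n)$, and $\T$-localization produces a Hodge integral on $\overline{\mathcal{M}}_{g,r}$ of the form $e(\mathbb{E}_g^*\otimes \text{Tan}_\eta(\Hilb))/e(\text{Tan}_\eta(\Hilb))$, with tangent weights given by the classical arm/leg hook expressions. On the $\Sym(\CC^2)$ side, an unramified orbifold stable map in the $\eta$-sector is a disjoint union of étale covers collapsed to the origin of $\CC^2$, and $\T$-localization produces a product over components of Hodge integrals on $\overline{\mathcal{H}}_g^\eta$ with weights $t_1,t_2$. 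The two indexing sets are both $\text{Part}(n)$, but the integrands live on different moduli spaces and look superficially quite different. The hard part is precisely this Hodge integral identity. My approach would be to view both sides as specializations of a single universal CohFT built from local curves varying over $\overline{\mathcal{M}}_{g,r}$ --- the bottom edge of the triangle of Theorem \ref{tt22} --- and to derive the identity by combining the fiberwise genus $0$ CRC with a Hurwitz--Hodge / ELSV-type comparison along the admissible-cover map $\overline{\mathcal{H}}_g^\eta\to\oM_g$. Insertions are incorporated by attaching the appropriate evaluation classes on both sides, using the fixed-point / sector correspondence indexed by $\text{Part}(n)$. The decisive technical input is the analytic continuation of the fundamental QDE solution of \cite{op29}, which guarantees that the two $\RRR$-matrices are expansions of a common analytic object on overlapping domains in $q$ and $u$; so matching them at $q=0$ and $u=0$ respectively forces agreement everywhere under $-q=e^{iu}$.
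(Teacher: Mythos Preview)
Your overall architecture---Givental--Teleman, the genus $0$ match from \cite{bg}, compatibility of divisor equations, and reduction to an $\RRR$-matrix comparison---is correct and is exactly the paper's framework. But the ``Main obstacle'' paragraph contains a genuine gap in the logic.

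You claim that it suffices to match the positive-genus degree $0$ invariants of $\Hilb$ with the positive-genus unramified invariants of $\Sym(\CC^2)$, and you frame this as a Hodge integral identity between the two families. This reduction is incorrect. The degree $0$ invariants of $\Hilb$ determine $\mathsf{R}^{\mathsf{Hilb}}\big|_{q=0}$, while the unramified invariants of $\Sym(\CC^2)$ determine $\mathsf{R}^{\mathsf{Sym}}\big|_{u=0}$. Under the substitution $-q=e^{iu}$, the point $u=0$ corresponds to $q=-1$, \emph{not} $q=0$. So the two sets of Hodge integrals are initial data at two different points of the parameter space, and there is no reason for them to agree directly---indeed, the explicit formulas in Propositions~\ref{prophilb} and~\ref{propsym} are visibly different (arm/leg tangent weights versus weights $\mu_i t_1,\mu_i t_2$). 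The proposed ELSV/Hurwitz--Hodge comparison is therefore aimed at the wrong target.

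What the paper actually does is precisely what you mention in your last sentence but do not integrate into the argument: it constructs an explicit solution $S$ to the QDE with parameter $z$ over an open set containing both $q=0$ and $q=-1$, and shows that the asymptotic expansion of $Se^{-\mathsf{u}/z}$ equals $\mathsf{R}^{\mathsf{Hilb}}$ near $q=0$ (Proposition~\ref{prop:asymp_S}) and equals $\mathsf{R}^{\mathsf{Sym}}\big|_{u=0}$ at $q=-1$ (Proposition~\ref{prop:asymp_S2}). The latter requires the connection formula of \cite{op29}, which expresses $S|_{q=-1}$ in terms of Macdonald polynomials $\mathsf{H}^\lambda$ and Gamma functions; a careful Stirling/Kostka--Foulkes asymptotic analysis then recovers exactly the formula of Proposition~\ref{propsym}. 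So the analytic continuation is not merely ``decisive technical input'' supporting a Hodge integral identity---it \emph{replaces} the need for any such identity by transporting the $\RRR$-matrix from $q=0$ to $q=-1$.
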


The variable change of Theorem \ref{crc} is well-defined by the following 
result (which was previously proven in genus 0 
in \cite{op}).

\begin{thm}\label{crcr} For all genera $g\geq 0$ and 
$\mu^1,\mu^2,\ldots,\mu^r\in \text{\em Part}(n)$, 
the series{\footnote{As always,
$g$ and $r$ 
are required
to be in the stable range $2g-2+r>0$.}}
$$\blang \mu^1, \mu^2, \ldots, \mu^r \brang_{g}^{\Hilb}\in \mathbb{Q}(t_1,t_2)[[q]]$$
is the Taylor expansion in $q$ of a rational
function in $\mathbb{Q}(t_1,t_1,q)$.
\end{thm}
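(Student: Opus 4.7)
The strategy is to deduce the claim from the Givental-Teleman reconstruction of Theorem \ref{ff11}, using as the principal analytic input the $q$-rationality of the fundamental solution of the QDE of $\Hilb$ established in \cite{op29}. Once the genus 0 theory is shown to be rational in $q$, the rationality in higher genus will follow from the fact that the $\RRR$-matrix entries and all other ingredients feeding into the Givental-Teleman stable graph sum are themselves rational functions of $q$, and only finitely many graphs contribute in fixed genus with a fixed number of legs.

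The first step is genus 0 rationality. The matrix entries of $\MM_D$ in the Nakajima basis are explicit rational functions of $q$ with poles only at roots of unity. The fundamental solution $\Psi(q)$ of the QDE
\[
q\frac{d}{dq}\Psi = \MM_D \cdot \Psi
\]
is a priori a multivalued analytic object, but \cite{op29} produces an analytic continuation to a single-valued meromorphic function on $\proj^1$ with poles only on a finite set. Single-valued meromorphic functions on $\proj^1$ are rational, hence $\Psi \in \rQ(t_1,t_2)(q)$ after normalization, and therefore every genus 0 correlator
\[
\blang \mu^1,\ldots,\mu^r \brang_0^{\Hilb} \in \rQ(t_1,t_2)(q).
\]

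The second step is to propagate rationality to higher genus via the $\RRR$-matrix. At a generic value of $q$, semisimplicity diagonalizes $\MM_D$ into canonical coordinates; the eigenvalues and normalized idempotents are algebraic over $\rQ(t_1,t_2,q)$ but typically lie in a finite extension $K$. By Theorem \ref{ff11}, the $\RRR$-matrix is uniquely specified by the divisor equation together with the degree 0 higher genus invariants, all of which lie in $\rQ(t_1,t_2)$. The $\RRR$-matrix is governed by a first-order ODE in $q$ with coefficients in $K(q)$, so its entries in the idempotent basis lie in $K[[1/q_?]]$ with $K(q)$-rational coefficients. However, when $\RRR$ is conjugated back to the Nakajima basis and inserted into the Givental-Teleman stable graph expansion, only combinations symmetric under the Galois group $\text{Gal}(K/\rQ(t_1,t_2,q))$ survive, since the CohFT correlators are defined over $\rQ(t_1,t_2,q)$. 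Galois descent then forces each coefficient in the stable graph sum to be rational in $q$.

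The third step is assembly: the Givental-Teleman formula expresses $\langle \mu^1,\ldots,\mu^r\rangle_{g}^{\Hilb}$ as a finite sum over stable graphs of genus $g$ with $r$ legs of integrals over $\oM_{g,r}$ of products of $\psi$-classes decorated by $\RRR$-matrix entries, canonical coordinates, and translation data. The underlying tautological integrals are rational numbers, and by the previous step the decorations are rational functions of $q$ (and $t_1,t_2$). Since the sum is finite in fixed $(g,r)$, the result is a rational function in $\rQ(t_1,t_2,q)$.

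The principal obstacle is the Galois descent argument of Step 2: individual entries of the $\RRR$-matrix in the idempotent basis are not in $\rQ(t_1,t_2,q)$, and one must verify that the precise symmetric combinations appearing in the stable graph formula are Galois-invariant and that the formal expansion in $1/q$ used to define $\RRR$ can be resummed to a rational function. This rests on the uniqueness clause of Theorem \ref{ff11} together with the rationality of the fundamental solution $\Psi$ from \cite{op29}, which controls the canonical coordinates and idempotent frame simultaneously and rules out essential singularities that would otherwise obstruct descent.
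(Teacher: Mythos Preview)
Your overall architecture---show the $\RRR$-matrix has coefficients in the algebraic closure $\overline{\mathsf{Q}}$ of $\mathbb{Q}(t_1,t_2,q)$, then use Galois invariance of the CohFT correlators and finiteness of the stable graph sum---is the paper's approach. But two of your inputs are wrong or unsubstantiated, and without them the argument does not close.

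First, Step 1 rests on a false reading of \cite{op29}. The fundamental solution of the QDE is \emph{not} a single-valued meromorphic function on $\proj^1$: near $q=0$ it factors as $\mathsf{Y}^\lambda(q)\,q^{-c(\lambda;t_1,t_2)}$, and the monodromy factor $q^{-c(\lambda)}$ is genuinely multivalued. What \cite{op29} supplies is an analytic continuation and a solution of the connection problem, not rationality of $\Psi$. The genus~0 rationality you need is not obtained this way; it is \cite[Section 4.2]{op}, which shows that all genus~0 correlators are reconstructed from the $3$-point functions \eqref{v233}, and those are visibly rational in $q$ because the matrix entries of $\MM_D$ are.

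Second, Step 2 does not actually prove anything about the $\RRR$-matrix. Saying that $\RRR$ ``is governed by a first-order ODE in $q$ with coefficients in $K(q)$'' and concluding its entries are $K(q)$-rational is a non sequitur: generic linear ODEs over $K(q)$ have transcendental solutions. The paper's argument is different and more concrete. One constructs an $\RRR$-matrix $\RRR^\#$ by the recursive procedure of \cite[Prop.~1.1]{g1}, setting all free diagonal constants to zero; since each recursion step involves only algebraic operations on the quantum multiplication matrices, their eigenvalues, and the idempotent frame---all of which live in $\overline{\mathsf{Q}}[[\mathsf{V}^*]]$---so does $\RRR^\#$. Then $\RRR^{\mathsf{Hilb}}$ differs from $\RRR^\#$ by right multiplication by $\exp(\sum_j \mathsf{a}_{2j-1}z^{2j-1})$ with diagonal $\mathsf{a}_{2j-1}$ having coefficients in $\mathbb{Q}(t_1,t_2)$, by Proposition~\ref{kkqq22}. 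This is what forces $\RRR^{\mathsf{Hilb}}$ into $\overline{\mathsf{Q}}[[\mathsf{V}^*]]$ rather than merely into some transcendental extension. Your Step~3 and the Galois descent are then correct as stated.
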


Calculations in closed form
in higher genus are
not easily obtained. The first
nontrivial example occurs  in genus 1
for the Hilbert scheme of 2 points:
\begin{equation}\label{dxx12}
\big\langle \, (2)\,  \big\rangle_{1}^{\mathsf{Hilb}^2(\mathbb{C}^2)} = -\frac{1}{24}\frac{(t_1+t_2)^2}{t_1t_2} \cdot \frac{1+q}{1-q}\, .
\end{equation}
While the formula is simple, our virtual
localization \cite{grpan} calculation of the integral is
rather long. Since
\eqref{dxx12} captures all degrees, the full graph
sum must be controlled --
we use the
valuation at $(t_1+t_2)$ 
as
in \cite{op}. 

The above calculation \eqref{dxx12} for the
Hilbert scheme of 2 points 
yields new Hodge integral
calculations for the bielliptic
locus $\overline{\mathcal{H}}_1\left((2)^{2n}\right)$, the moduli space of double covers of elliptic curves with
$2n$ ordered branch points,
$$\overline{\mathcal{H}}_1
\left({(2)^{2n}}\right) \rightarrow 
\overline{\mathcal{M}}_{1,2n}\, .$$ 
Since the domain curves parameterized by 
$\overline{\mathcal{H}}_1
\left({(2)^{2n}}\right)$
have genus $n+1$, there is a Hodge bundle
$$\mathbb{E}^*_{n+1} \rightarrow
\overline{\mathcal{H}}_1
\left({(2)^{2n}}\right)\, .$$
By a direct application{\footnote{We follow
the standard convention $\lambda_i= c_i(\mathbb{E}_{n+1})$ .}} of Theorem \ref{crc},
\begin{eqnarray*}
\sum_{n=1}^\infty \frac{u^{2n-1}} {(2n-1)!}  \int_{\overline{\mathcal{H}}_1
\left({(2)^{2n}}\right)} \lambda_{n+1}\lambda_{n-1}
&=&
\frac{i}{24} \cdot \frac{1- e^{iu}}{1+e^{iu}} \\
& = & 
\frac{1}{48} u + \frac{1}{576} u^3 + \frac{1}{5760} u^5  + \ldots \, .
 \end{eqnarray*}
The $u$ and $u^3$ coefficients
can be checked geometrically using the
bielliptic calculations of \cite{pagfab}. 
The $u^5$ coefficient has been checked in \cite[Remark 5.14]{joh}.

The corresponding series
for higher $n$, 
\begin{equation*}
\big\langle \, (2,1^{n-2})\,  \big\rangle_{1}^{\mathsf{Hilb}^n(\mathbb{C}^2)}
\in \mathbb{Q}(t_1,t_2,q)\, ,
\end{equation*}
very likely has a simple closed formula.
We will return to these questions in a
future paper. 

\subsection{Plan of proof}
Theorems \ref{tt22} and \ref{crc} are
proven together by studying the $\mathsf{R}$-matrices of all four theories.
The $\mathsf{R}$-matrix of 
the CohFT associated to the local Donaldson-Thomas
theories of curves is easily proven
to coincide with the $\mathsf{R}$-matrix
of the $\T$-equivariant Gromov-Witten theory
of $\Hilb$ determined in Theorem \ref{ff11}.
Similarly, the $\mathsf{R}$-matrices of
the CohFTs associated to $\Sym(\mathbb{C}^2)$ and
the local Gromov-Witten theories of curves
are straightforward to match (with determination by Theorem \ref{4444}). The above results
require a detailed study of
the divisor equation in the
four cases.

The main step of the joint proof of
Theorems \ref{tt22} and \ref{crc} is to
 match the $\mathsf{R}$-matrices
of Theorems \ref{ff11} and \ref{4444}.
The matter is non-trivial since the former
is a function of $q$ and the latter is 
a function of $u$. The matching after
$$-q=e^{iu}$$
requires an analytic continuation.
Our method here is to express the $\mathsf{R}$-matrices in terms of the
solution of the QDE associated to
$\Hilb$. Fortunately, the analytic continuation of
the solution of the QDE proven in 
\cite{op29} is exactly what is needed,
after a careful study of asymptotic expansions,
to match the two $\mathsf{R}$-matrices.

The addition of $\Sym(\com^2)$ via 
Theorem \ref{crc} to the triangle
of Theorem \ref{tt22}
yields a tetrahedron of
equivalences of $\T$-equivariant
theories (as first formulated in genus 0 in 
\cite{bg}).

\begin{center}
\scriptsize
\begin{picture}(200,175)(-30,-50)
%\put(-100 ,-115 ){\line(1 ,0 ){340}}
%\put(-100 ,-115 ){\line(0 ,1 ){225}}
%\put(-100 ,110 ){\line(1 ,0 ){340}}
%\put(240 ,-115 ){\line(0 ,1 ){225}}
\thicklines
\put(25,25){\line(1,1){50}}
\put(25,25){\line(1,-1){50}}
\put(125,25){\line(-1,1){50}}
\put(125,25){\line(-1,-1){50}}
\put(75,-25){\line(0,1){100}}
\put(25,25){\line(1,0){45}}
\put(80,25){\line(1,0){45}}
\put(75,95){\makebox(0,0){Gromov-Witten theory of $\Hilb$}}
\put(75,85){\makebox(0,0){in genus $g$
with $r$ insertions}}
\put(75,-35){\makebox(0,0){Orbifold
Gromov-Witten theory of $\Sym(\com^2)$}}
\put(75,-45){\makebox(0,0){in genus $g$ 
with $r$ insertions}}
%\put(160,35){\makebox(0,0){Equivariant}}
\put(190,25){\makebox(0,0){Donaldson-Thomas theory of}}
\put(190,15){\makebox(0,0){
$\pi:\mathbb{C}^2 \times 
\mathcal{C} \rightarrow
\overline{\mathcal{M}}_{g,r}$
}}
%\put(-15,35){\makebox(0,0){Equivariant}}
\put(-35,25){\makebox(0,0){Gromov-Witten theory of}}
\put(-35,15){\makebox(0,0){
$\pi:\mathbb{C}^2 \times 
\mathcal{C} \rightarrow
\overline{\mathcal{M}}_{g,r}
$}}
\end{picture}
\end{center}

\normalsize

\vspace{20pt}

Before the development of the orbifold Gromov-Witten
theory of $\Sym(\com^2)$, a theory of Hurwitz-Hodge integrals
was proposed
by Cavalieri \cite{Cav}.  While the orbifold Gromov-Witten
theory is formulated
in terms of principal $S_n$-bundles over curves,
Cavalieri's theory is formulated in terms of
the associated Hurwitz covers of curves.
In fact, the virtual class of the orbifold theory of
$\Sym(\com^2)$
exactly coincides with the Hodge integrand proposed
by Cavalieri, so the two theories are {\em equal}. 
The orbifold vertex of the above
tetrahedron may therefore also be viewed via Cavalieri's definition.{\footnote{Our discussion concerns
Cavalieri's level $(0,0)$ theory extended naturally
over the moduli space of genus $g$ curves. In genus $0$, Cavalieri \cite{Cav} noted
the equivalence of his theory to the Gromov-Witten vertex.
He also defined theories of other levels $(a,b)$ which do not
exactly agree with the corresponding $(a,b)$-theories 
of the Gromov-Witten vertex (even in genus $0$). We do not
explore here the interesting geometry of
the $(a,b)$-level structure over the moduli
space of genus $g$ curves.
}}

\subsection{Acknowledgments} 
We are grateful to J.~Bryan, A.~Buryak, R. Cavalieri, T.~Graber, J. Gu\'er\'e, F.~Janda, 
Y.-P.~Lee, H. Lho,
D.~Maulik, A.~Oblomkov, A.~Okounkov, D. Oprea,
A.~Pixton,
Y. Ruan,
R.~Thomas, and D.~Zvonkine for many
conversations over the years related to the Gromov-Witten theory of
$\Hilb$, $3$-fold theories of local curves, and CohFTs.
We thank A. Givental and C. Teleman for discussions about semisimple CohFTs
and O. Costin and S. Gautam for discussions about asymptotic
expansions.
%Forschungsinstitut f\"ur Mathematik (FIM) at ETHZ.

R.~P. was partially supported by 
% SNF-200021143\-274, 
 SNF-200020-162928, 
 SNF-200020-182181,
 ERC-2012-AdG-320368-MCSK, 
 ERC-2017-AdG-786580-MACI,
 SwissMAP, and
the Einstein Stiftung. 
H.-H. ~T. was partially supported by a Simons foundation collaboration grant and NSF grant DMS-1506551.
The research presented here was
furthered during visits of H.-H. T. to ETH Z\"urich in March 2016 and March 2017. 
The project has received funding from the European Research Council (ERC)
under the European Union Horizon 2020 Research and Innovation Program (grant No. 786580).

%\section{The $\mathsf{R}$-matrix}
\section{Cohomological field theory}\label{ccfftt}

\subsection{Definitions}
The notion of a {cohomological field theory} (CohFT) was introduced in 
\cite{km}, see also \cite{m}. We follow 
closely here the treatment of \cite[Section 0.5]{ppz}, and we refer the
reader also to the survey \cite{Picm}.

Let $k$ be an algebraically closed field of characteristic $0$. Let $\RR$ be a commutative $k$-algebra. Let 
$\V$ be a free $\RR$-module of finite rank, let $$\eta: \V\otimes \V\to \RR$$ 
be an even, symmetric, non-degenerate{\footnote{The pairing
$\eta$ induces an isomorphism between $\V$ and the dual module
$\V^*= \text{Hom}(\V,\RR)$.}} pairing, and let
$\mathbf{1} \in V$ be a distinguished element.
The data $(\V,\eta, \b1)$ is the starting point for defining 
a cohomological field theory.
Given a 
basis $\{e_i\}$ of $\V$, we write the 
symmetric form as a matrix
$$\eta_{jk}=\eta(e_j,e_k) \ .$$ The inverse matrix is denoted by $\eta^{jk}$ as usual.

A {cohomological field theory} consists of 
a system $\Omega = (\Omega_{g,r})_{2g-2+r > 0}$ of elements 
$$
\Omega_{g,r} \in H^*(\oM_{g,r},\RR) \otimes (\V^*)^{\otimes r}.
$$
We view $\Omega_{g,r}$ as associating a cohomology class on $\oM_{g,r}$ 
to elements of $\V$ assigned to the $r$ markings.
The CohFT axioms imposed on $\Omega$ are:

\begin{enumerate}
\item[(i)] Each $\Omega_{g,r}$ is $S_r$-invariant, where the action of the symmetric group $S_r$ permutes both the marked points of $\oM_{g,r}$ and the
copies of $\V^*$.
\item[(ii)] Denote the basic gluing maps by
$$
q : \oM_{g-1, r+2} \to \oM_{g,r}\ ,
$$
$$
\widetilde{q}: \oM_{g_1, r_1+1} \times \oM_{g_2, r_2+1} \to \oM_{g,r}\ .
$$
The pull-backs $q^*(\Omega_{g,r})$ and 
$\widetilde{q}^*(\Omega_{g,r})$ are
 equal to the contractions of $$\Omega_{g-1,r+2}\ \ \text{and}\ \  
\Omega_{g_1, r_1+1} \otimes \Omega_{g_2, r_2+1}$$ by the bi-vector 
$$\sum_{j,k} \eta^{jk} e_j \otimes e_k$$ inserted at the two identified points. 
\item[(iii)] Let $v_1, \dots, v_r \in \V$  and let $p: \oM_{g,r+1} \to \oM_{g,r}$ be the forgetful map. We require 
$$
\Omega_{g,r+1}(v_1 \otimes \cdots \otimes v_r \otimes \b1) = p^*\Omega_{g,r} (v_1 \otimes \cdots \otimes v_r)\ ,
$$
$$\Omega_{0,3}(v_1\otimes v_2 \otimes \b1) = \eta(v_1,v_2)\ .$$
\end{enumerate}

\begin{definition}\label{defcohft}
A system $\Omega= (\Omega_{g,r})_{2g-2+r>0}$ of elements 
$$
\Omega_{g,r} \in H^*(\oM_{g,r},\RR) \otimes (V^*)^{\otimes r}
$$
satisfying properties~(i) and (ii) is a {\em cohomological field theory} ({\em CohFT}). If (iii) is also satisfied, $\Omega$ is 
a {\em CohFT with unit}.
\end{definition}

A CohFT $\Omega$ yields a {\em quantum product} $\star$ on $\V$ via 
\begin{equation}\label{gvv44}
\eta(v_1 \star v_2, v_3) = \Omega_{0,3}(v_1 \otimes v_2 \otimes v_3)\ .
\end{equation}
Associativity of $\star$ follows from (ii). The element
$\b1\in \V$ is the identity for $\star$ by (iii).

A CohFT $\omega$ composed only of degree~0 classes,
$$\omega_{g,r} \in H^0(\oM_{g,r},\RR) \otimes (\V^*)^{\otimes r}\ ,$$
 is called a {\em topological field theory}. 
Via property (ii), $\omega_{g,r}(v_1, \dots, v_r)$ 
is determined by considering stable curves with a maximal number of nodes. 
Every irreducible  
component of such a curve is 
of genus 0 with 3 special points.
The value of $\omega_{g,r}(v_1 \otimes \cdots \otimes v_r)$ is 
thus uniquely specified by the values of $\omega_{0,3}$ and by the pairing $\eta$. In other words, given $\V$ and $\eta$, a topological field theory is uniquely determined by the associated quantum product.

\subsection{Gromov-Witten theory}

Let $X$ be a nonsingular projective variety over $\mathbb{C}$. 
The stack  of stable maps{\footnote{See
\cite{FulP} for an introduction to stable maps.}} 
$\Mbar_{g,r}(X, \beta)$ of genus $g$
curves to $X$ representing the class $\beta\in H_2(X,\mathbb{Z})$
admits  an evaluation 
$$\text{ev}: \Mbar_{g,r}(X, \beta)\to X^r$$ and a forgetful map 
$$\rho: \Mbar_{g,r}(X, \beta)\to \Mbar_{g,r}\, ,$$
when $2g-2+r>0$. 
The {\em Gromov-Witten} CohFT is constructed from
the virtual classes of the moduli of stable maps of $X$, 
$$\Omega_{g,r}^{\mathsf{GW}}(v_1\otimes...\otimes v_r)
=\sum_{\beta\in H_2(X, \mathbb{Z})}q^\beta \rho_*\Big(\text{ev}^*(v_1\otimes...\otimes v_r)\cap [\Mbar_{g,r}(X, \beta)]^{vir}\Big). $$
The ground ring $\RR$ is the {\em Novikov ring}, a suitable completion of the group ring{\footnote{We take the Novikov ring with $\mathbb{Q}$-coefficients.}} associated to the semi-group of effective curve classes in $X$,
$$\V= H^*(X,\mathbb{Q}) \otimes_{\mathbb{Q}} \RR =
H^*(X,\RR)\, ,$$ 
 the pairing $\eta$ is the extension of 
the Poincar\'e pairing, and
$\mathbf{1}\in \V$ is the unit in cohomology.

\subsection{Semisimplicity}
\subsubsection{Classification}
Let $\Omega$ be a CohFT with respect to $(\V,\eta,\mathbf{1})$.
We are concerned here with theories for which the algebra $\V$ with respect
to the quantum product $\star$ defined by \eqref{gvv44} is semisimple. 
Such theories are classified in \cite{t}. 
Specifically, $\Omega$ is obtained from the algebra $\V$ via the action of an 
$\RRR$-matrix $$\RRR \in {\bf 1} + z \cdot \End(\V) [[z]],$$ satisfying the symplectic 
condition $$\RRR(z)\RRR^{\star}(-z)=\bf 1.$$ Here, $\RRR^{\star}$ denotes the 
adjoint with respect to $\eta$ and $\bf 1$ is the identity matrix. 
The explicit reconstruction of the semisimple CohFT from the $\RRR$-matrix action will 
be explained below, following \cite {ppz}.

\subsubsection {Actions on CohFTs}

Let $\Omega=(\Omega_{g, r})$ be a CohFT with respect to  $(\V, \eta,\mathbf{1})$. 
%The case we have in mind is that of a TQFT constructed from the degree zero part of a semisimple CohFT, but we do not yet place such a requirement on $\omega$. 
Fix a symplectic matrix 
$$\RRR\in {\bf 1}+z\cdot \text{End }(\V)[[z]]$$ as above. %such that $$R(z)R^{\star}(-z)={\bf 1}.$$ 
A new CohFT with respect to $(\V, \eta,\mathbf{1})$ is obtained via 
the cohomology elements $$\RRR \Omega=(\RRR\Omega)_{g, r},$$  defined as sums over stable graphs $\Gamma$ of genus $g$ with $r$ legs, with contributions coming from vertices, legs, and
edges. Specifically, \begin{equation}\label{romega}(\RRR\Omega)_{g, r}=\sum_{\Gamma} \frac{1}{|\text{Aut }(\Gamma)|} (\iota_{\Gamma})_{\star} \left(\prod_{v}\mathsf {Cont}(v)
\prod_{l}\mathsf {Cont}(l)
\prod_{e}\mathsf {Cont }(e) \right)\end{equation} where:
\begin {itemize}
\item [(i)] the vertex contribution is $$\mathsf {Cont}(v)=\Omega_{g(v), r(v)},$$ with $g(v)$ and $r(v)$ denoting the genus and number of half-edges and legs of the vertex,
\item [(ii)] the leg contribution is $$\mathsf{Cont}(l)=\RRR(\psi_l)$$ where $\psi_l$ is the cotangent class at the marking corresponding to the leg, 
\item [(iii)] the edge contribution is $${\mathsf {Cont}}(e)=\frac{\eta^{-1}-\RRR(\psi'_e)\eta^{-1} \RRR(\psi''_e)^{\top}}{\psi'_e+\psi''_e}.$$ Here $\psi'_e$ and $\psi''_e$ are the cotangent classes at the node which represents the edge $e$. The symplectic condition guarantees that the edge contribution is well-defined. 
\end {itemize}

%To simplify our formulas, we have changed Givental's and Teleman's conventions by replacing $\RRR$ with $\RRR^{-1}$. In particular, equation \eqref{romega} above determines a right group action on CohFTs, rather than a left group action as in Givental's and Teleman's papers. The differences will play no role.

A second action on CohFTs is given by translations. As before, let $\Omega$
be a CohFT with respect to $(\V, 1, \eta)$  
 and consider a power series $\T\in \V[[z]]$ with no terms of degrees $0$ and $1$: $$\T(z)=\T_2z^2+\T_3z^3+\ldots,\,\,\,\T_k\in \V.$$ A new CohFT wit respect to
  $(\V, 1, \eta)$, denoted $\T\Omega$, is defined by setting \begin{equation}\label{translation} (\T\Omega)_{g, r} (v_1\otimes \ldots \otimes v_r)=\sum_{m=0}^{\infty} \frac{1}{m!} (p_m)_{\star} \Omega_{g, r+m}(v_1\otimes \ldots\otimes v_r \otimes \T(\psi_{r+1})\otimes \ldots \otimes \T(\psi_{r+m}))\end{equation} where $$p_{m}:\oM_{g, r+m}\to \oM_{g, r}$$ is the forgetful morphism.

 \subsubsection{Reconstruction} With the above terminology understood, we can state the Givental-Teleman classification theorem \cite {t}. Fix $\Omega$ a semisimple CohFT with respect to $(\V, 1, \eta)$, and write $\omega$ for the degree $0$ topological part of the theory. 
Given any symplectic matrix $$\RRR\in {\mathbf 1}+z\cdot \End (\V)[[z]]$$ as above, we form a power series $\T$ by plugging $1 \in \V$ into $\RRR$, removing the free term, and multiplying by~$z$:
$$\T(z)=z(1-\RRR(1))\in \V[[z]].$$ 

\vspace{8pt}
\noindent {\bf Givental-Teleman classification.} 
{\em There exists a unique symplectic matrix $\RRR$ for which}  
 $$\Omega=\RRR \T\omega\, .$$

\noindent A proof of uniqueness can be found, for example, in \cite[Lemma 2.2]{MOPPZ}.

\subsection{Targets $\Hilb$ and $\Sym(\mathbb{C}^2)$} \label{tarhs}
The CohFTs determined by the
$\T$-equivariant Gromov-Witten theories of
$\Hilb$ and $\Sym(\mathbb{C}^2)$ are based on the 
algebras
\begin{equation}\label{eqn:ground_rings}
    \mathsf{A}= \mathbb{Q}(t_1,t_2)[[q]]\, , \ \ \ 
\widetilde{\mathsf{A}}= \mathbb{Q}(t_1,t_2)[[u]]\, ,
\end{equation}
and the corresponding free modules
\begin{equation}\label{eqn:vec_spaces}
\V = \mathcal{F}^n \otimes_ {\mathbb{Q}} \mathsf{A}\, , \ \ \ 
 \widetilde{\V} = \mathcal{F}^n \otimes_ {\mathbb{Q}} \widetilde{\mathsf{A}}\, ,
\end{equation}
 where $\mathcal{F}^n$ is the Fock space with basis
indexed by $\text{Part}(n)$.
While the inner product for the CohFT determined by $\Hilb$
is the inner product defined in \eqref{inner_prod},
$$\eta(\mu,\nu)= \langle \mu | \nu \rangle 
=\frac{(-1)^{|\mu|-\ell(\mu)}}{(t_1 t_2)^{\ell(\mu)}} 
\frac{\delta_{\mu\nu}}{\zz(\mu)} 
\,, $$
the inner product for the CohFT determined by $\Sym(\mathbb{C}^2)$
differs by a sign,
$$\widetilde{\eta}(\mu,\nu)= (-1)^{|\mu|-\ell(\mu)} \langle \mu | \nu \rangle =
\frac{1}{(t_1 t_2)^{\ell(\mu)}} 
\frac{\delta_{\mu\nu}}{\zz(\mu)} \, .
$$
Since the CohFTs are both
semisimple, we obtain unique $\mathsf{R}$-matrices from
the Givental-Teleman classification,
$$\mathsf{R}^{\mathsf{Hilb}} \, \ \ \text{and} \ \ \mathsf{R}^{\mathsf{Sym}}\, .$$

After rescaling the insertion $\mu$ in
the $\T$-equivariant Gromov-Witten theory of 
$\Sym(\mathbb{C}^2)$
by the factor $(-i)^{|\mu|-\ell(\mu)}$, the inner products match. Let
\begin{equation}\label{eqn:mu_tilde}
|\widetilde{\mu}\, \rangle= (-i)^{\ell(\mu)-|\mu|} |\mu\rangle \, \in
\widetilde{\mathsf{V}}.
\end{equation}
The linear transformation
$\mathsf{V} \rightarrow \widetilde{\mathsf{V}}$
defined by $$\mu \mapsto \widetilde{\mu}$$
respects the inner products $\eta$ and $\widetilde{\eta}$.
Moreover, the correspondence claimed by Theorem \ref{crc} 
then simplifies to   
$$\blang \mu^1, \mu^2, \ldots, \mu^r \brang_{g}^{\Hilb} =
\blang \widetilde{\mu}^1, \widetilde{\mu}^2, \ldots, \widetilde{\mu}^r \brang_{g}^{{\Sym}(\mathbb{C}^2)}$$
 after the variable change $-q=e^{iu}$.

To prove Theorems \ref{tt22} and \ref{crc},
we will construct an operator series 
$\mathsf{R}$ defined over an open set
of values of $q\in \mathbb{C}$
containing $q=0$ and $q=-1$ with
the two following properties
\begin{enumerate}
\item[$\bullet$]
$\mathsf{R}=\mathsf{R}^{\text{Hilb}}$
near $q=0$, 
\item[$\bullet$]
$\mathsf{R}=\mathsf{R}^{\text{Sym}}$
near $q=-1$ after the variable change $-q=e^{iu}$.
\end{enumerate}
The operator $\mathsf{R}$ is obtained from the asymptotic solution to the 
QDE of $\Hilb$. The existence of
$\mathsf{R}$ is guaranteed by results of \cite{d,g1}. The asymptotic solution is shown to be the asymptotic expansion of an actual solution to the QDE. The behavior of $\mathsf{R}$ near $q=-1$ is then studied by using the solution to the connection problem in \cite{op29}.

\section{The $\mathsf{R}$-matrix for $\Hilb$}

\subsection{The  formal Frobenius manifold}
We follow the notation of Section \ref{tarhs} for the
CohFT determined by the $\T$-equivariant Gromov-Witten
theory of $\Hilb$. The genus 0 Gromov-Witten potential,
$$\mathsf{F}^{\Hilb}_0(\gamma)=
\sum_{d=0}^\infty q^d \sum_{r=0}^\infty \frac{1}{r!}
\big\langle \underbrace{\gamma, \ldots, \gamma}_{r}  \big\rangle_{0,d}^{\Hilb}\,, \ \ \ \gamma \in \mathsf{V}$$
is a formal series in the ring
$\mathsf{A}[[\mathsf{V}^*]]$
where
$$ \mathsf{A}= \mathbb{Q}(t_1,t_2)[[q]]\, .$$
The $\T$-equivariant genus 0 potential $\mathsf{F}^{\Hilb}_0$
defines a formal Frobenius manifold 
$$(\mathsf{V}, \star, \eta)$$
at the
origin of $\mathsf{V}$.

A basic result of
\cite{op} is the {\em rationality} of the dependence
on $q$. Let
$$\mathsf{Q}=\mathbb{Q}(t_1,t_2,q)\ $$
be the field of rational functions.
By \cite[Section 4.2]{op}, 
$$\mathsf{F}^{\Hilb}_0(\gamma)\in \mathsf{Q}[[\mathsf{V}^*]]
\,.$$
We will often view the formal Frobenius manifold 
$(\mathsf{V}, \star, \eta)$
as defined
over the field
$\mathsf{Q}$ instead of the ring  $\mathsf{A}$.

\subsection{Semisimplicity} \label{semisemi}
The formal Frobenius manifold $(\mathsf{V}, \star, \eta)$ is semisimple at the 
origin after
extending $\mathsf{Q}$ to the algebraic
closure $\overline{\mathsf{Q}}$.
The semisimple
algebra $$(\text{Tan}_0\mathsf{V}, \star_0, \eta)$$ 
is the small quantum cohomology
of $\Hilb$.
The matrices of quantum multiplication of
the algebra $\text{Tan}_0(\mathsf{V})$
have coefficients in $\mathsf{Q}$.
The restriction to $q=0$ is well-defined
and semisimple with idempotents proportional
to the classes of the $\T$-fixed points
of $\Hilb$. The idempotents of the algebra
 $\text{Tan}_0(\mathsf{V})$
may be written in the Nakajima basis
after extending scalars to $\overline{\mathsf{Q}}$. The algebraic
closure{\footnote{More precisely, coefficients
lie in the ring $\overline{\mathsf{Q}} \cap 
\mathsf{A}$ since the eigenvalues lie in
$\mathsf{A}$.}}
is required since the eigenvalues
of the matrices of quantum multiplication
lie in finite extensions of $\mathsf{Q}$.

Since the formal Frobenius manifold $(\mathsf{V}, \star, \eta)$ is semisimple at 
the 
origin, the full algebra
\begin{equation}\label{fulla}
(\text{Tan}_0(\mathsf{V}) \otimes_{\mathsf{Q}}\mathsf{Q}[[\mathsf{V}^*]], \star, \eta)
\end{equation}
is also semisimple (after extending scalars
to $\overline{\mathsf{Q}}$).
The idempotents of the algebra $\text{Tan}_0(\mathsf{V})$ can be
lifted to all orders to obtain
idempotents $\epsilon_\mu$ of the
full algebra \eqref{fulla} parameterized
by partitions $\mu$ corresponding to the
$\T$-fixed points of $\Hilb$.

For the formal Frobenius manifold
$(\mathsf{V},\star,\eta)$, there are two
bases of vector fields which play a fundamental
role in the theory:
\begin{enumerate}
\item[$\bullet$] the {\em flat} vector fields
$\partial_\mu$
corresponding the basis elements 
$|\mu\rangle \in \mathsf{V}$,\\
\item[$\bullet$] the {\em normalized
idempotents}
$\widetilde{\epsilon}_\mu$
of the quantum product $\star$.
\end{enumerate}
The normalized idempotents are 
proportional to the idempotents   and satisfy
$$\eta(\widetilde{\epsilon}_\mu,
\widetilde{\epsilon}_\nu)= \delta_{\mu\nu}\, .$$
Let $\Psi$ be change of basis matrix,
$$\Psi^\nu_\mu = \eta(\widetilde{\epsilon}_\nu,\partial_\mu)\, \in \, \overline{\mathsf{Q}}[[\mathsf{V}^*]]\, $$
between the two frames.
Unique {\em canonical coordinates} at the origin,
$$\big\{\, u_\mu \in \overline{\mathsf{Q}}[[\mathsf{V}^*]]\, \big\}_{\mu\in \text{Part}(n)}\, $$
can be found satisfying
$$u_\mu(0)=0\,, \ \  \ \ \ \frac{\partial}{\partial u_\mu} = \epsilon_\mu \, .$$

By \cite[Prop. 1.1]{g1}, 
 the quantum differential equation{\footnote{We follow the notation of
the exposition in \cite{lp}. See
also \cite{d}.}} 
\begin{equation}\label{qqde}
\nabla_z \widetilde{\mathsf{S}}= 0
%\phi_\alpha\star_t S dt^\alpha
\end{equation}
has a formal fundamental solution{\footnote{Often the solution is written in the basis of
flat vector fields as
$$\mathsf{S}= \Psi^{-1} \mathsf{R}(z) e^{\mathsf{u}/z}\, .$$}}
in the basis of normalized
idempotents of the form 
\begin{equation}\label{aasymp_S}
\widetilde{\mathsf{S}}= 
\mathsf{R}(z) e^{\mathsf{u}/z}
\end{equation} 
Here, $\nabla_z$ is the Dubrovin
connection,
\begin{equation}
\label{rtrtrt}
\mathsf{R}(z)=\Id+\mathsf{R}_1z
+\mathsf{R}_2z^2+\mathsf{R}_3 z^3+ \ldots
\end{equation} is 
a series of $|\text{Part}(n)| \times
|\text{Part}(n)|$ matrices starting
with the identity matrix $\bf{1}$, and
$\mathsf{u}$ is a diagonal matrix with
the diagonal entries given by the
canonical coordinates $u_\mu$.

We view $\mathsf{R}(z)$ as
an
$\text{End}(\mathsf{V})$-valued formal power series 
in $z$
written in the basis of normalized
idempotents. The 
series $\mathsf{R}(z)$ satisfies the symplectic condition 
\begin{equation}\label{unitary}
\mathsf{R}^{\dagger}(-z)\mathsf{R}(z)=\Id,
\end{equation}
 where $\mathsf{R}^\dagger(z)$ is 
 the adjoint of 
 $\mathsf{R}(z)$ with respect to the inner
 product $\eta$. 
A detailed treatment of the construction and  properties
 of $\mathsf{R}(z)$ can found in \cite[Section 4.6 of Chapter 1]{lp}.

An {\em $\mathsf{R}$-matrix} associated
to the formal Frobenius manifold
$(\mathsf{V},\star, \eta)$ 
is a matrix series of the form \eqref{rtrtrt} which
determines a solution \eqref{aasymp_S}
of the quantum differential equation \eqref{qqde}
{\em and} satisfies the symplectic condition
\eqref{unitary}.
The two basic properties of  $\mathsf{R}$-matrices which we will use are:
\begin{enumerate}
    \item[(i)] There exists a unique
    $\mathsf{R}$-matrix $\mathsf{R}^{\mathsf{Hilb}}$
    associated to $(\mathsf{V},\star,\eta)$
      with coefficients
    in $\mathsf{A}[[\mathsf{V}^*]]$
    which generates the $\T$-equivariant
    Gromov-Witten theory of $\Hilb$ in
    all higher genus.
    \item[(ii)] Two $\mathsf{R}$-matrices    
    associated to $(\mathsf{V},\star,\eta)$
    with coefficients in $\mathsf{A}[[\mathsf{V}^*]]$
    must differ by right multiplication by 
    $$\exp\left( \sum_{j=1}^\infty \mathsf{a}_{2j-1} z^{2j-1}\right)$$
    where each $\mathsf{a}_{2j-1}$ 
    is a {\em diagonal} matrix with
    coefficients in $\mathsf{A}$.
    \end{enumerate}
Property (i) is a statement of the Givental-Teleman
classification of semisimple CohFTs applied
to the $\T$-equivariant Gromov-Witten theory
of $\Hilb$.
For properties (i) and (ii), we have 
left the field $\overline{\mathsf{Q}}$
and returned to the ring $\mathsf{A}$ because
the CohFT associated to $\Hilb$ is defined
over $\mathsf{A}$. In Section 
\ref{ppcrcr}, 
the unique $\mathsf{R}$-matrix of property (i) 
will be shown to actually
have coefficients in $\overline{\mathsf{Q}}[[\mathsf{V}^*]]$.

\subsection{Divisor equation} \label{ddtt2}
The formal Frobenius manifold $(\mathsf{V},\star,\eta)$ is actually well-defined away from the origin along the
line with coordinate $t$ determined by the vector 
$$| 2,1^{n-2} \rangle \in \mathsf{V}\,. $$
At the point $-t|2, 1^{n-2}\rangle \in \mathsf{V}$, the potential of the
Frobenius manifold is
\begin{eqnarray*}
\mathsf{F}^{\Hilb}_0\Big(
-t|2, 1^{n-2}\rangle +\gamma
\Big)&=&
\sum_{d=0}^\infty q^d \sum_{r=0}^\infty\sum_{m=0}^\infty 
\frac{t^m}{r!m!}
\big\langle \underbrace{\gamma, \ldots, \gamma}_{r}, \underbrace{D, \ldots, D}_{m}  
\big\rangle_{0,d}^{\Hilb} \\
& = & 
\sum_{d=0}^\infty q^d \sum_{r=0}^\infty\sum_{m=0}^\infty 
\frac{(dt)^m}{r!m!}
\big\langle \underbrace{\gamma, \ldots, \gamma}_{r}  
\big\rangle_{0,d}^{\Hilb} \, ,\\
& = &
\sum_{d=0}^\infty q^d e^{dt} \sum_{r=0}^\infty\frac{1}{r!}
\big\langle \underbrace{\gamma, \ldots, \gamma}_{r}\rangle_{0,d}^{\Hilb}
\end{eqnarray*}
for $\gamma \in \mathsf{V}$ and
$
D = - \lv2,1^{n-2}\rang$
as in \eqref{DDDD}.
We have used
the {\em divisor equation} of
Gromov-Witten theory in the second equality.
The potential  near the point 
$-t|2, 1^{n-2}\rangle\in \mathsf{V}$ is obtained
from the potential at $0\in \mathsf{V}$
by the substitution
$$\mathsf{F}_0^{\Hilb}
\Big(
-t|2, 1^{n-2}\rangle +\gamma
\Big)
= \mathsf{F}_0^{\Hilb}(\gamma)\Big|_{q\mapsto
qe^t}\, .$$
The Frobenius manifold is semisimple
at all the points $-t|2, 1^{n-2}\rangle\in \mathsf{V}$ .

Let $\Omega$ be the CohFT associated
to the $\T$-equivariant Gromov-Witten 
theory of $\Hilb$. The genus 0 data
of $\Omega$ is exactly given 
by the formal Frobenius manifold $(\mathsf{V},\star, \eta)$ at 
the origin. Define the $-t|2,1^{n-2}\rangle$-shifted
CohFT by
\begin{eqnarray*}
 \Omega_{g,r}^{-t|2,1^{n-2}\rangle}(\gamma \otimes \cdots \otimes \gamma)&=&\sum_{m\geq 0}\frac{t^m}{m!}\, \rho^{r+m}_{r*}\left(\Omega_{g, r+m}(\gamma \otimes\cdots \otimes \gamma\otimes D^{\otimes m})\right) \\
 &=&
 \Omega_{g,r}(\gamma \otimes \cdots \otimes \gamma)\Big|_{q\mapsto qe^t}\, .
\end{eqnarray*}
Here, $\rho^{r+m}_r$ is the forgetful map
which drops the last $m$ markings,
\begin{equation}\label{eqn:fgt_map}
\rho^{r+m}_r:\Mbar_{g,r+m}\to \Mbar_{g,r}\, .
\end{equation}
We have again used the divisor equation
of Gromov-Witten theory in the
second equality.

Let $\mathsf{R}^{\mathsf{Hilb}}$ be the unique
$\mathsf{R}$-matrix associated to the $\T$-equivariant
Gromov-Witten theory of $\Hilb$.
The shifted CohFT $\Omega_{g,r}^{-t|2,1^{n-2}\rangle}$
is obtained from the semisimple 
genus 0 data
$$\mathsf{F}_0^{\Hilb}
\Big(
-t|2, 1^{n-2}\rangle +\gamma
\Big)$$
by the unique $\mathsf{R}$-matrix
$$\mathsf{R}^{\mathsf{Hilb}}
\Big(
-t|2, 1^{n-2}\rangle +\gamma
\Big)\, .$$
On the other hand, the $\mathsf{R}$-matrix
$$\mathsf{R}^{\mathsf{Hilb}}\Big|_{q\mapsto qe^t}$$
also generates 
$\Omega_{g,r}^{-t|2,1^{n-2}\rangle}$
from the same semisimple genus 0 data.
By the uniqueness of the $\mathsf{R}$-matrix
in the Givental-Teleman classification,
$$\mathsf{R}^{\mathsf{Hilb}}
\Big(
-t|2, 1^{n-2}\rangle +\gamma
\Big) =
\mathsf{R}^{\mathsf{Hilb}}\Big|_{q\mapsto qe^t}\, .$$
We find the following differential equation:
\begin{equation}\label{j22j}
-\frac{\partial}{\partial t} \mathsf{R}^{\mathsf{Hilb}} = q\frac{\partial}{\partial q} \mathsf{R}^{\mathsf{Hilb}}\, 
.
\end{equation}

The differential equation is an {\em extra} condition
satisfied by $\mathsf{R}^{\Hilb}$
in {\em addition} to
determining a solution \eqref{aasymp_S}
of the quantum differential equation \eqref{qqde}
{and} satisfying the symplectic condition
\eqref{unitary}.

\begin{prop}
\label{kkqq22}
Two $\mathsf{R}$-matrices    
    associated to $(\mathsf{V},\star,\eta)$
    with coefficients in $\mathsf{A}[[\mathsf{V}^*]]$
    which {\em both} satisfy the
    differential equation
$$-\frac{\partial}{\partial t} \mathsf{R} = q\frac{\partial}{\partial q} \mathsf{R}
$$ 
must differ by right multiplication by 
    $$\exp\left( \sum_{i=1}^\infty \mathsf{a}_{2j-1} z^{2j-1}\right)$$
    where each $\mathsf{a}_{2j-1}$ 
    is a {\em diagonal} matrix with
    coefficients in $\mathbb{Q}(t_1,t_2)$.
\end{prop}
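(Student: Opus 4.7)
The plan is to combine property~(ii) with the Leibniz rule and the divisor equation. First, I will invoke property~(ii) to write any two $\mathsf{R}$-matrices $\mathsf{R},\mathsf{R}'$ satisfying the hypotheses as $\mathsf{R}' = \mathsf{R} \cdot \mathsf{U}$ with
$$\mathsf{U} \,=\, \exp\Bigl(\sum_{j\geq 1} \mathsf{a}_{2j-1}\, z^{2j-1}\Bigr),$$
each $\mathsf{a}_{2j-1}$ diagonal with coefficients in $\mathsf{A} = \mathbb{Q}(t_1,t_2)[[q]]$. The remaining task is to upgrade the ring of coefficients to $\mathbb{Q}(t_1,t_2)$, i.e.\ to eliminate the dependence on $q$.

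Next, I will apply the first-order operator $\mathcal{L} = \partial_t + q\partial_q$ to the factorization $\mathsf{R}' = \mathsf{R} \cdot \mathsf{U}$. By hypothesis $\mathcal{L}\mathsf{R} = \mathcal{L}\mathsf{R}' = 0$, so the Leibniz rule gives
$$0 \,=\, (\mathcal{L}\mathsf{R})\,\mathsf{U} + \mathsf{R}\,(\mathcal{L}\mathsf{U}) \,=\, \mathsf{R}\,(\mathcal{L}\mathsf{U}).$$
Since $\mathsf{R} = \mathbf{1} + O(z)$ is invertible as an $\End(\mathsf{V})$-valued power series, this forces $\mathcal{L}\mathsf{U} = 0$.

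Finally, I will exploit the fact that each $\mathsf{a}_{2j-1}$ has coefficients in $\mathsf{A}$ rather than $\mathsf{A}[[\mathsf{V}^*]]$: it is constant in every $\mathsf{V}^*$-coordinate, so $\partial_t \mathsf{a}_{2j-1} = 0$, and hence $\partial_t \mathsf{U} = 0$. The identity $\mathcal{L}\mathsf{U} = 0$ then collapses to $q\partial_q \mathsf{U} = 0$. Since the $\mathsf{a}_{2j-1}$ are simultaneously diagonal, they commute with each other and with their own $q$-derivatives, so
$$0 \,=\, q\partial_q\mathsf{U} \,=\, \mathsf{U}\cdot \sum_{j\geq 1}\bigl(q\partial_q\mathsf{a}_{2j-1}\bigr)\, z^{2j-1}.$$
Invertibility of $\mathsf{U}$ together with matching coefficients of $z$ yields $q\partial_q \mathsf{a}_{2j-1} = 0$ for every $j$, so each $\mathsf{a}_{2j-1}$ is $q$-independent with entries in $\mathbb{Q}(t_1,t_2)$, as claimed.

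No step presents a substantive obstacle; the argument is a direct consequence of property~(ii) plus the Leibniz rule applied to \eqref{j22j}. The nontrivial content is absorbed into property~(ii) itself (Givental--Teleman uniqueness for the semisimple CohFT) and into the fact that the divisor equation \eqref{j22j} holds for any $\mathsf{R}$-matrix in the hypothesis — both already established in the preceding discussion.
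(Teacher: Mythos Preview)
Your proof is correct and follows essentially the same approach as the paper's: invoke property~(ii) to write the two $\mathsf{R}$-matrices as differing by a diagonal exponential factor with coefficients in $\mathsf{A}$, use that the $\mathsf{a}_{2j-1}$ carry no dependence on the $\mathsf{V}^*$-coordinate $t$, and then combine the divisor differential equation for both matrices with invertibility to force $q\partial_q\mathsf{a}_{2j-1}=0$. The only cosmetic difference is that you package the two differentiations into the single operator $\mathcal{L}=\partial_t+q\partial_q$ and apply the Leibniz rule once, whereas the paper computes $-\partial_t$ and $q\partial_q$ of the factorization separately before comparing; the logical content is identical.
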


\begin{proof} 
Let $\widetilde{\mathsf{R}}$ and $\widehat{\mathsf{R}}$
be two $\mathsf{R}$-matrices associated to
$(\mathsf{V},\star,\eta)$ which both
satisfy the additional differential equation.
By property (ii) of
$\mathsf{R}$-matrices associated to $(\mathsf{V},\star,\eta)$,
$$
\widetilde{\mathsf{R}}= \widehat{\mathsf{R}}\cdot
\exp\left( \sum_{i=1}^\infty \mathsf{a}_{2j-1} z^{2j-1}\right)$$
    where each $\mathsf{a}_{2j-1}$ 
    is a {diagonal} matrix with
    coefficients in $$\mathsf{A}=\mathbb{Q}(t_1,t_2)[[q]]\, .$$
Differentiation yields
\begin{eqnarray*}
-\frac{\partial}{\partial t} \widetilde{\mathsf{R}} &  = &
-\frac{\partial}{\partial t} \widehat{\mathsf{R}} \cdot
\exp\left( \sum_{i=1}^\infty \mathsf{a}_{2j-1} z^{2j-1}\right) -
\widehat{\mathsf{R}}
\cdot \frac{\partial}{\partial t} 
\exp\left( \sum_{i=1}^\infty \mathsf{a}_{2j-1} z^{2j-1}\right) \\
& = & 
-\frac{\partial}{\partial t} \widehat{\mathsf{R}} \cdot
\exp\left( \sum_{i=1}^\infty \mathsf{a}_{2j-1} z^{2j-1}\right) \\
& = &
\ q\frac{\partial}{\partial q} \widehat{\mathsf{R}}
\cdot \exp\left( \sum_{i=1}^\infty \mathsf{a}_{2j-1} z^{2j-1}\right)\, .
\end{eqnarray*}
The second equality uses the independence 
of $\mathsf{a}_{2j-1}$ with respect to
the coordinate $t$. The third equality
uses the differential equation for
$\widehat{\mathsf{R}}$. Application of the operator
$q\frac{\partial}{\partial q}$ to $\widetilde{\mathsf{R}}$ yields
$$
q\frac{\partial}{\partial q} \widetilde{\mathsf{R}}  = 
q\frac{\partial}{\partial q} \widehat{\mathsf{R}} \cdot
\exp\left( \sum_{i=1}^\infty \mathsf{a}_{2j-1} z^{2j-1}\right) +
\widehat{\mathsf{R}}
\cdot q\frac{\partial}{\partial q} 
\exp\left( \sum_{i=1}^\infty \mathsf{a}_{2j-1} z^{2j-1}\right)\, . $$
Finally, using the differential equation
for $\widetilde{\mathsf{R}}$, we find
$$\widehat{\mathsf{R}}
\cdot q\frac{\partial}{\partial q} 
\exp\left( \sum_{i=1}^\infty \mathsf{a}_{2j-1} z^{2j-1}\right)=0\, . $$
By the invertibility of $\widehat{\mathsf{R}}$,
$$q\frac{\partial}{\partial q} 
\exp\left( \sum_{i=1}^\infty \mathsf{a}_{2j-1} z^{2j-1}\right)=0\, .$$ 
Hence, the matrices $\mathsf{a}_{2j-1}$
are independent of $q$.
\end{proof}

\subsection{Proof of Theorem \ref{ff11}} \label{ddtt}
Since the CohFT $\Omega_{g,r}$ associated to the 
$\T$-equivariant Gromov-Witten theory of $\Hilb$ is defined
over the ring 
$$\mathsf{A}= \mathbb{Q}(t_1,t_2)[[q]]\, ,$$
we can consider the CohFT $\Omega_{g,r}\big|_{q=0}$
defined over the ring $\mathbb{Q}(t_1,t_2)$.
The CohFT $\Omega_{g,r}\big|_{q=0}$ is semisimple with
$\mathsf{R}$-matrix given by the $q=0$ restriction,
$$\mathsf{R}^{\mathsf{Hilb}}\big|_{q=0}\, ,$$
of the  $\mathsf{R}$-matrix
 of $\Omega_{g,r}$.
As a corollary of Proposition \ref{kkqq22}, we obtain
the following characterization of 
$\mathsf{R}^{\mathsf{Hilb}}$.

\begin{prop}\label{vv44}
$\mathsf{R}^{\text{\em Hilb}}$ is the unique $\mathsf{R}$-matrix
associated to $(\mathsf{V},\star, \eta)$ which satisfies
\begin{equation*}
-\frac{\partial}{\partial t} \mathsf{R}^{\mathsf{Hilb}} = q\frac{\partial}{\partial q} \mathsf{R}^{\mathsf{Hilb}}\, 
\end{equation*}
{\em and} has $q=0$ restriction which
equals the $\mathsf{R}$-matrix of 
the restricted {\em CohFT} $\Omega_{g,r}\big|_{q=0}$.
\end{prop}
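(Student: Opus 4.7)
\textbf{Proof proposal for Proposition \ref{vv44}.} The plan is to deduce the statement as a direct corollary of Proposition \ref{kkqq22} combined with the Givental-Teleman uniqueness statement. There are two things to check: that $\mathsf{R}^{\mathsf{Hilb}}$ itself satisfies the two listed properties, and that any other $\mathsf{R}$-matrix associated to $(\mathsf{V},\star,\eta)$ which satisfies them coincides with $\mathsf{R}^{\mathsf{Hilb}}$.

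First I would verify the two properties for $\mathsf{R}^{\mathsf{Hilb}}$. The differential equation \eqref{j22j} was already established in Section \ref{ddtt2} using the divisor equation, so the first property is automatic. For the second, the key point is that restriction of the ground ring $\mathsf{A} \to \mathbb{Q}(t_1,t_2)$ by $q\mapsto 0$ commutes with the reconstruction of a semisimple CohFT from its genus $0$ data and its $\mathsf{R}$-matrix: the graph-sum formula \eqref{romega} and the translation rule \eqref{translation} are both polynomial/formal power-series operations that are stable under scalar specialization. Therefore $\mathsf{R}^{\mathsf{Hilb}}\big|_{q=0}$ is an $\mathsf{R}$-matrix which reconstructs $\Omega_{g,n}\big|_{q=0}$ from its $q=0$ genus $0$ data, and uniqueness in the Givental-Teleman classification then identifies it with the $\mathsf{R}$-matrix of the restricted CohFT.

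For uniqueness, suppose $\mathsf{R}$ is another $\mathsf{R}$-matrix associated to $(\mathsf{V},\star,\eta)$ satisfying both properties. Since both $\mathsf{R}$ and $\mathsf{R}^{\mathsf{Hilb}}$ satisfy the differential equation $-\partial_t = q\partial_q$, Proposition \ref{kkqq22} gives
\begin{equation*}
\mathsf{R} \;=\; \mathsf{R}^{\mathsf{Hilb}} \cdot \exp\!\left( \sum_{j=1}^\infty \mathsf{a}_{2j-1}\, z^{2j-1}\right),
\end{equation*}
where each $\mathsf{a}_{2j-1}$ is a diagonal matrix with coefficients in $\mathbb{Q}(t_1,t_2)$, independent of $q$. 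Specializing at $q=0$ and using the $q$-independence of the $\mathsf{a}_{2j-1}$, the second condition forces
\begin{equation*}
\mathsf{R}^{\mathsf{Hilb}}\big|_{q=0} \;=\; \mathsf{R}\big|_{q=0} \;=\; \mathsf{R}^{\mathsf{Hilb}}\big|_{q=0} \cdot \exp\!\left( \sum_{j=1}^\infty \mathsf{a}_{2j-1}\, z^{2j-1}\right).
\end{equation*}
By invertibility of $\mathsf{R}^{\mathsf{Hilb}}\big|_{q=0}$, the exponential factor equals the identity, so each $\mathsf{a}_{2j-1}$ vanishes and $\mathsf{R}=\mathsf{R}^{\mathsf{Hilb}}$.

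The only non-routine point I anticipate is making the commutation of ``$q=0$ restriction'' with ``reconstruction from the $\mathsf{R}$-matrix'' fully rigorous in the presence of canonical-coordinate power series that a priori live in $\overline{\mathsf{Q}}[[\mathsf{V}^*]]$; this is handled by working in the ring $\overline{\mathsf{Q}}\cap\mathsf{A}$ as mentioned in Section \ref{semisemi}, and then appealing to Givental-Teleman uniqueness at $q=0$. Everything else is a straightforward bookkeeping consequence of Proposition \ref{kkqq22}.
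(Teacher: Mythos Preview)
Your proposal is correct and follows essentially the same approach as the paper: the paper states Proposition~\ref{vv44} directly as a corollary of Proposition~\ref{kkqq22}, having first observed that the restricted CohFT $\Omega_{g,n}\big|_{q=0}$ is semisimple with $\mathsf{R}$-matrix equal to $\mathsf{R}^{\mathsf{Hilb}}\big|_{q=0}$. Your argument simply fills in the details the paper leaves implicit --- verifying the two properties for $\mathsf{R}^{\mathsf{Hilb}}$ and then using the $q$-independence of the diagonal correction from Proposition~\ref{kkqq22} together with the common $q=0$ restriction to force the correction to be trivial.
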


Since $\Omega_{g,r}\big|_{q=0}$ concerns constant maps
of genus $g$ curves to $\Hilb$, the CohFT can be written
explicitly in terms of Hodge integrals.
The moduli space of maps in degree 0 is
$$\oM_{g,r}(\Hilb,0) = \oM_{g,r}\times \Hilb$$
with virtual class
$$\mathbb{E}^*_g \otimes \text{Tan}(\Hilb)\, .$$
Since the Hodge bundle is pulled back from $\oM_{1,1}$ in genus 1 and
$\oM_{g}$ in higher genera,
all invariants in positive genus vanish other than
\begin{eqnarray}
\big\langle \mu \big\rangle_{1,0}^{\Hilb} &=& \sum_{\eta\in \text{Part}(n)} \mu|_\eta \int_{\oM_{1,1}}
\frac{e\left(\mathbb{E}_1^* \otimes \text{Tan}_\eta(\Hilb)\right)}
{e\left(\text{Tan}_\eta(\Hilb)\right)
}\, ,  \label{gg93}
\\ \nonumber
\big\langle \, \big\rangle_{g\geq 2,0}^{\Hilb} &=& \sum_{\eta\in \text{Part}(n)} \int_{\oM_g}
\frac{e\left(\mathbb{E}_g^* \otimes \text{Tan}_\eta(\Hilb)\right)}
{e\left(\text{Tan}_\eta(\Hilb)\right)
}\,
.
\end{eqnarray}
Theorem \ref{ff11} follows from Proposition \ref{vv44}  together
with the fact that \eqref{gg93} is the complete list of degree 0 invariants. \qed

\vspace{8pt}
The unique $\mathsf{R}$-matrix  $\mathsf{R}^{\mathsf{Hilb}}\big|_{q=0}$
of the CohFT $\Omega_{g,r}\big|_{q=0}$ 
can be explicitly written
after the
coordinates on $\mathsf{V}$
are set to 0. The formula 
is presented  in Section \ref{subsec:hilb_deg0}.

\subsection{Proof of Theorem \ref{crcr}} \label{ppcrcr}
As we have seen in Section \ref{ddtt2} using the divisor equation,
the dependence of the potential of the formal
Frobenius manifold $(\mathsf{V}, \star, \eta)$ 
at the origin,
$$\mathsf{F}_0^{\Hilb}\in \mathsf{Q}[[\mathsf{V}^*]],$$
along  $-t|2,1^{n-2}\rangle$
can be expressed as
$$ \mathsf{F}_0^{\Hilb} =
\Big(\mathsf{F}_0^{\Hilb}\big|_{t=0}\Big)_{q\mapsto qe^t}\, .$$
The same dependence on $t$ then also holds for
the matrices of quantum multiplication for $(\mathsf{V}, \star, \eta)$
and their common eigenvalues.

In the procedure{\footnote{See \cite[Section 4.6 of Chapter 1]{lp}}} for constructing an $\mathsf{R}$-matrix associated
to $(\mathsf{V},\star,\eta)$, we can take all the undetermined diagonal constants for $\mathsf{R}_{2j-1}$
equal to 0 for all $j$. The resulting associated $\mathsf{R}$-matrix $\mathsf{R}^\#$ will satisfy
\begin{equation*}
-\frac{\partial}{\partial t} \mathsf{R}^\# = q\frac{\partial}{\partial q} \mathsf{R}^\#\, 
\end{equation*}
since the same $q\mapsto qe^t$  dependence on $t$ holds for all terms in
the procedure.
By Proposition \ref{kkqq22},
\begin{equation}\label{k33k}
\mathsf{R}^{\mathsf{Hilb}} = \mathsf{R}^{\#}\cdot
\exp\left( \sum_{i=1}^\infty \mathsf{a}_{2j-1} z^{2j-1}\right)\, ,
\end{equation}
    where each $\mathsf{a}_{2j-1}$ 
    is a {\em diagonal} matrix with
    coefficients in $\mathbb{Q}(t_1,t_2)$.

By \cite[Section 4.2]{op}, the third 
derivatives of the
potential of $(\mathsf{V},\star,\eta)$
are defined over $\mathsf{Q}$,
$$\frac{\partial^3}{\partial t_{\mu^1}\partial t_{\mu^2} \partial t_{\mu^3}}\mathsf{F}_0^{\Hilb}\in \mathsf{Q}[[\mathsf{V}^*]]\, .$$
Hence, the matrices of quantum multiplication also have coefficients
in $\mathsf{Q}[[\mathsf{V}^*]]$. As we have remarked in Section \ref{semisemi},
the common eigenvalues require finite extensions of $\mathsf{Q}$.
Using the procedure for the construction of $\mathsf{R}^{\mathsf{Hilb}}$
with undetermined diagonal constants in $\mathbb{Q}(t_1,t_2)$
fixed by \eqref{k33k},
we see that the coefficients of $\mathsf{R}^{\mathsf{Hilb}}$
lie in the ring $\overline{\mathsf{Q}}[[\mathsf{V}^*]]$.

The definition of the $\mathsf{R}$-matrix action then yields
the rationality of Theorem \ref{crcr} after applying Galois
invariance:
for all genera $g\geq 0$ and 
$\mu^1,\mu^2,\ldots,\mu^r\in \text{Part}(n)$, 
the series
$$\blang \mu^1, \mu^2, \ldots, \mu^r \brang_{g}^{\Hilb}\in \mathbb{Q}(t_1,t_2)[[q]]$$
is the Taylor expansion in $q$ of a rational
function in $\mathbb{Q}(t_1,t_1,q)$. \qed

\section{The $\mathsf{R}$-matrix for $\mathsf{Sym}^n(\mathbb{C}^2)$}
\subsection{The formal Frobenius manifold}
The  $\T$-equivariant Gromov-Witten potential in genus 0, 
$$ 
\mathsf{F}_0^{\mathsf{Sym}^n(\mathbb{C}^2)}(\gamma)=\sum_{b=0}^\infty u^b\sum_{n=0}^\infty \frac{1}{r!}\big\langle\underbrace{\gamma, \ldots, \gamma}_{r} \big\rangle_{0,b}^{\mathsf{Sym}^n(\mathbb{C}^2)},\quad \gamma\in \widetilde{\V},
$$
is a formal series in the ring $\widetilde{\mathsf{A}}[[\widetilde{\V}^*]]$, where $\widetilde{\mathsf{A}}$ is given in (\ref{eqn:ground_rings}). The potential $\mathsf{F}_0^{\mathsf{Sym}^n(\mathbb{C}^2)}$ defines a formal Frobenius manifold $$(\widetilde{\V}, \widetilde{\star}, \widetilde{\eta})$$ at the origin of $\widetilde{\V}$. The Frobenius algebra $$(\text{Tan}_0 \widetilde{\V}, \widetilde{\star}_0, \widetilde{\eta})$$
is the small\footnote{The term {\em small} here refers to deformations by {\em twisted divisors}, introduced in \cite[Section 2.2]{bg}.} quantum cohomology of $\mathsf{F}_0^{\mathsf{Sym}^n(\mathbb{C}^2)}$, which is calculated in \cite[Section 3.3]{bg}. The formal Frobenius manifold $(\widetilde{\V}, \widetilde{\star}, \widetilde{\eta})$ is semisimple at the origin. The idempotents of $\text{Tan}_0\widetilde{\V}$ can be written in terms of the standard basis of the Chen-Ruan cohomology of $\mathsf{Sym}^n(\mathbb{C}^2)$ after extension of scalars. Again, the
idempotents  of $\text{Tan}_0\widetilde{\V}$ are indexed by partitions $\mu$.

We write\footnote{$\widetilde{\mathsf{A}}^{\text{cl}}$ denotes the algebraic closure of the field of fractions of  $\widetilde{\mathsf{A}}$.} $\{\widetilde{u}_\mu\in \widetilde{\mathsf{A}}^{\text{cl}}[[\widetilde{\V}^*]]\}_{\mu\in \text{Part}(n)}$ for the unique  canonical coordinates of $(\widetilde{\V}, \widetilde{\star}, \widetilde{\eta})$ satisfying that $\widetilde{u}_\mu(0)=0$ and $\partial/\partial \widetilde{u}_\mu$ is an idempotent. By \cite[Prop. 1.1]{g1}, the quantum differential equation associated to the formal Frobenius manifold $(\widetilde{\V}, \widetilde{\star}, \widetilde{\eta})$, 
$$\widetilde{\nabla}_z\widetilde{\mathsf{S}}=0,$$
admits a formal fundamental solution of the form $$\widetilde{\mathsf{S}}=\widetilde{\mathsf{R}}(z)e^{\widetilde{\mathsf{u}}/z},$$
written in the basis of normalized idempotents. Here $\widetilde{\nabla}_z$ is the Dubrovin connection associated to $(\widetilde{\V}, \widetilde{\star}, \widetilde{\eta})$, $\widetilde{\mathsf{u}}$ is the diagonal matrix with diagonal entries given by $\widetilde{u}_\mu$, and $$\widetilde{\mathsf{R}}(z)=\Id+\widetilde{\mathsf{R}}_1z+\widetilde{\mathsf{R}}_2z^2+...$$ is an $\text{End}(\widetilde{\V})$-valued formal power series in $z$ written in the basis of normalized idempotents. The symplectic condition $$\widetilde{\mathsf{R}}^\dagger(-z)\widetilde{\mathsf{R}}(z)=\Id,$$
taken with respect to the inner product $\widetilde{\eta}$, is required.

By \cite[Prop. 1.1]{g1}, two $\mathsf{R}$-matrices satisfying the quantum differential equation associated to $(\widetilde{\V}, \widetilde{\star}, \widetilde{\eta})$ and the symplectic condition must differ by right multiplication by $$\exp\left(\sum_{j=1}^\infty \widetilde{\mathsf{a}}_{2j-1}z^{2j-1} \right),$$
where each $\widetilde{\mathsf{a}}_{2j-1}$ is a diagonal matrix with coefficients in $\widetilde{\mathsf{A}}$.

\subsection{Divisor equation}\label{diveqq}

Let $\oM_{g,r}(\text{Sym}^n(\mathbb{C}^2))$ be the moduli space\footnote{The moduli stack, which parameterizes stable maps {\em with sections to all marked gerbes}, is also
used in \cite{Tseng}.}
of $n$-pointed genus $g$ stable maps to $\text{Sym}^n(\mathbb{C}^2)$. 
Let $$\text{ev}_i: \oM_{g,r}(\text{Sym}^n(\mathbb{C}^2))\to I\text{Sym}^n(\mathbb{C}^2)$$ be the $\T$-equivariant 
evaluation map at the $i$-th marked point with values in the inertia stack $$I\text{Sym}^n(\mathbb{C}^2)$$ of $\text{Sym}^n(\mathbb{C}^2)$. The inertia stack $I\text{Sym}^n(\mathbb{C}^2)$ is a disjoint union indexed by conjugacy classes of the symmetric group $S_n$. For $\mu\in \text{Part}(n)$, the component $I_\mu\subset I\text{Sym}^n(\mathbb{C}^2)$ indexed by the conjugacy class of cycle type $\mu$ is isomorphic to the stack quotient $$\left[\mathbb{C}^{2n}_\sigma/C(\sigma)\right]\, ,$$ where $\sigma\in S_n$ has cycle type $\mu$, $\mathbb{C}^{2n}_\sigma$ is the invariant part of $\mathbb{C}^{2n}$ under the action of $\sigma$, and $C(\sigma)$ is the centralizer of $\sigma\in S_n$. Let $$[I_\mu]\in H^0_\T(I_\mu)\subset  H_\T^*(I\text{Sym}^n(\mathbb{C}^2))$$ be the fundamental
class. There is an additive isomorphism $$H_\T^*(I\text{Sym}^n(\mathbb{C}^2))\simeq
\widetilde{\mathsf{V}}$$
%\mathcal{F}^n\otimes_\mathbb{Q} \mathbb{Q}[t_1, t_2]$$ 
given by sending $[I_\mu]$ to 
%$(-i)^{|\mu|-\ell(\mu)}$
$|\mu\big\rangle$. 

The {\em unramified $\T$-equivariant  Gromov-Witten invariants} are defined by  $$\big\langle \mu^1,\ldots, \mu^r\big\rangle_{g,0}^{\text{Sym}^n(\mathbb{C}^2)}=\int_{[\oM_{g,r}(\text{Sym}^n(\mathbb{C}^2))]^{vir}}\prod_{i=1}^r \text{ev}_i^*([I_{\mu^i}]).$$
Consider $$\text{ev}_{r+1}^{-1}(I_{(2, 1^{n-2})})\cap\ldots \cap \text{ev}_{r+b}^{-1}(I_{(2, 1^{n-2})})\subset \oM_{g,r+b}(\text{Sym}^n(\mathbb{C}^2)),$$ and let 
$$\oM_{g,r, b}(\text{Sym}^n(\mathbb{C}^2))=[\left(\text{ev}_{r+1}^{-1}(I_{(2, 1^{n-2})})\cap\ldots \cap \text{ev}_{r+b}^{-1}(I_{(2, 1^{n-2})}) \right)/S_b]$$ where the symmetric group $S_b$ acts by permuting the last $b$ marked points. The Gromov-Witten invariants with $b$ free ramification points are defined by 
\begin{equation*}
\big\langle \mu^1, \ldots, \mu^r\big\rangle_{g,b}^{\text{Sym}^n(\mathbb{C}^2)}=\int_{[\oM_{g,r, b}(\text{Sym}^n(\mathbb{C}^2))]^{vir}}\prod_{i=1}^r \text{ev}_i^*([I_{\mu^i}])\, .
\end{equation*} The
above definition 
can be rewritten{\footnote{See  \cite[Section 2.2]{bg} for a
parallel discussion.}}
as
\begin{equation}
\label{eqn:div_eqn_sym}
\big\langle \mu^1, \ldots, \mu^r \big\rangle_{g,b}^{\mathsf{Sym}^n(\mathbb{C}^2)}
=\frac{1}{b!}\big\langle \mu^1, \ldots, \mu^r, \underbrace{(2, 1^{n-2}), \ldots, (2, 1^{n-2})}_{b}  \big\rangle_{g,0}^{\mathsf{Sym}^n(\mathbb{C}^2)},
\end{equation}

Property \eqref{eqn:div_eqn_sym}
will be important for our
study of the Frobenius manifold $(\widetilde{\V}, \widetilde{\star}, \widetilde{\eta})$.
In particular, the {\em divisor
equation} holds:
\begin{equation*}
\big\langle \mu^1, \ldots, \mu^{r}, (2,1^{n-2}) \big\rangle_{g,b}^{\mathsf{Sym}^n(\mathbb{C}^2)}
= (b+1) \big\langle \mu^1, \ldots, \mu^r
\big\rangle_{g,b+1}^{\mathsf{Sym}^n(\mathbb{C}^2)}\, ,
\end{equation*}
or equivalently,
\begin{equation*}
\big\langle \mu^1, \ldots, \mu^{r}, (2,1^{n-2}) \big\rangle_{g}^{\mathsf{Sym}^n(\mathbb{C}^2)}
= \frac{\partial}{\partial u} \big\langle \mu^1, \ldots, \mu^r
\big\rangle_{g}^{\mathsf{Sym}^n(\mathbb{C}^2)}\, ,
\end{equation*}
for the generating series
\begin{eqnarray*}
\blang \mu^1,  \ldots, \mu^r, (2,1^{n-2}) \brang_{g}^{{\Sym}(\mathbb{C}^2)}
& = &  \sum_{b=0}^\infty 
\blang \mu^1, \ldots, \mu^r ,
(2,1^{n-2})
\brang_{g,b}^{{\Sym}(\mathbb{C}^2)} u^b
\, ,\\
\blang \mu^1, \ldots, \mu^r \brang_{g}^{{\Sym}(\mathbb{C}^2)}
& = &  \sum_{b=0}^\infty 
\blang \mu^1,  \ldots, \mu^r \brang_{g,b}^{{\Sym}(\mathbb{C}^2)} u^b
\, .
\end{eqnarray*}

%The evaluation maps %$\text{ev}_i$ for %$i=1,...,r$ on %$\oM_{g,r+b}(\text{Sym}^n(\%mathbb{C}^2))$ descend to  %$[\oM_{g,r+b}(\text{Sym}^n(%\mathbb{C}^2))/S_b]$, 
%and yield 
%the following commutative %diagram:

%\begin{displaymath}
%    \xymatrix{ 
%   \text{ev}_{r+1}^{-1}(I_{(2, 1^{n-2})})\cap\ldots \cap \text{ev}_{r+b}^{-1}(I_{(2, 1^{n-2})})\ar@{^{(}->}[r]\ar[d]_{\pi} & \oM_{g,r+b}(\text{Sym}^n(\mathbb{C}^2)) \ar[r]^{\text{ev}_i}\ar[d]_{\pi} &I\text{Sym}^n(\mathbb{C}^2) \\
%  \oM_{g,r, b}(\text{Sym}^n(\mathbb{C}^2))\ar@{^{(}->}[r] & [\oM_{g,r+b}(\text{Sym}^n(\mathbb{C}^2))/S_b]\ar[ur]_{\text{ev}_i}.&}
%\end{displaymath}
%It follows that 
%\begin{equation*}
%    \begin{split}
%    &\int_{[\oM_{g,r+b}(\text{Sym}^n(\mathbb{C}^2))]^{vir}}\prod_{i=1}^r \text{ev}_i^*([I_{\mu^i}])\cup\prod_{i=r+1}^{r+b} \text{ev}_i^*([I_{(2, 1^{n-2})}])\\
%    =&\int_{[\text{ev}_{r+1}^{-1}(I_{(2, 1^{n-2})})\cap\ldots \cap \text{ev}_{r+b}^{-1}(I_{(2, 1^{n-2})})]^{vir}}\prod_{i=1}^r \pi^*\text{ev}_i^*([I_{\mu^i}])\\
%    =&\int_{\pi_*[\text{ev}_{r+1}^{-1}(I_{(2, 1^{n-2})})\cap\ldots \cap \text{ev}_{r+b}^{-1}(I_{(2, 1^{n-2})})]^{vir}}\prod_{i=1}^r \text{ev}_i^*([I_{\mu^i}])\\
%    =&\int_{b![\oM_{g,r, b}(\text{Sym}^n(\mathbb{C}^2))]^{vir}}\prod_{i=1}^r \text{ev}_i^*([I_{\mu^i}]),
%    \end{split}
%\end{equation*}
%which is (\ref{eqn:div_eqn_sym}).

\subsection{Shifted CohFT}
Let $\widetilde{t}$ be the coordinate of the vector $[I_{(2, 1^{n-2})}]\in\widetilde{\V}$. The formal Frobenius manifold $(\widetilde{\V}, \widetilde{\star}, \widetilde{\eta})$ is well-defined at $\widetilde{t}\,[I_{(2, 1^{n-2})}]\in \widetilde{\V}$, at which the potential of the Frobenius manifold is 
\begin{equation*}
    \begin{split}
    \mathsf{F}_0^{\mathsf{Sym}^n(\mathbb{C}^2)}\left(\widetilde{t}\,[I_{(2, 1^{n-2})}]+\gamma\right)
    &=\sum_{b=0}^\infty u^b \sum_{r=0}^\infty\sum_{m=0}^\infty\frac{\widetilde{t}^m}{r!m!}\big\langle \underbrace{\gamma, \ldots, \gamma}_{r}, \underbrace{(2, 1^{n-2}), \ldots, (2, 1^{n-2})}_{m}  \big\rangle_{0,b}^{\mathsf{Sym}^n(\mathbb{C}^2)}\\
    &=\sum_{b=0}^\infty  \sum_{r=0}^\infty\sum_{m=0}^\infty\frac{u^b\,\widetilde{t}^m}{r!m!b!}\big\langle \underbrace{\gamma, \ldots, \gamma}_{n}, \underbrace{(2, 1^{n-2}), \ldots, (2, 1^{n-2})}_{m+b}  \big\rangle_{0,0}^{\mathsf{Sym}^n(\mathbb{C}^2)}\\
    &=\sum_{b=0}^\infty  \sum_{r=0}^\infty\sum_{m=0}^\infty\frac{u^b\, \widetilde{t}^m(m+b)!}{r!m!b!(m+b)!}\big\langle \underbrace{\gamma, \ldots, \gamma}_{r}, \underbrace{(2, 1^{n-2}), \ldots, (2, 1^{n-2})}_{m+b}  \big\rangle_{0,0}^{\mathsf{Sym}^n(\mathbb{C}^2)}\\
    &=\sum_{b=0}^\infty  \sum_{r=0}^\infty\sum_{m=0}^\infty\frac{u^b\,\widetilde{t}^m(m+b)!}{r!m!b!}\big\langle \underbrace{\gamma, \ldots, \gamma}_{r}  \big\rangle_{0,m+b}^{\mathsf{Sym}^n(\mathbb{C}^2)}\\
    &=\sum_{b=0}^\infty  \sum_{r=0}^\infty\sum_{m=0}^\infty\frac{u^b\,\widetilde{t}^m}{r!}\binom{m+b}{m}\big\langle \underbrace{\gamma, \ldots, \gamma}_{r}  \big\rangle_{0,m+b}^{\mathsf{Sym}^n(\mathbb{C}^2)}\\
    &=\sum_{d=0}^\infty (u+\widetilde{t}\, )^d\sum_{r=0}^\infty \frac{1}{r!}\big\langle\underbrace{\gamma, \ldots, \gamma}_{r} \big\rangle_{0,b}^{\mathsf{Sym}^n(\mathbb{C}^2)}.
    \end{split}
\end{equation*}
In the above calculation above, we have used (\ref{eqn:div_eqn_sym}) in the second and fourth equalities. 
We conclude
$$    \mathsf{F}_0^{\mathsf{Sym}^n(\mathbb{C}^2)}\left(\widetilde{t}\, [I_{(2, 1^{n-2})}]+\gamma\right)
=\mathsf{F}_0^{\mathsf{Sym}^n(\mathbb{C}^2)}\left(\gamma\right)\Big|_{u\mapsto u+\widetilde{t}}\ .
$$
Certainly the Frobenius manifold is semisimple at $\widetilde{t}\, [I_{(2, 1^{n-2})}]\in \widetilde{\V}$.

Let $\widetilde{\Omega}_{g,r}$ be the CohFT associated to the $\T$-equivariant Gromov-Witten theory of $\mathsf{Sym}^n(\mathbb{C}^2)$:
\begin{equation*}
    \begin{split}
    \widetilde{\Omega}_{g,r}(\gamma\otimes...\otimes\gamma)=\sum_{b=0}^\infty \frac{u^b}{b!} \rho_*\Big(\text{ev}^*(\gamma^{\otimes r}\otimes[I_{(2, 1^{n-2})}]^{\otimes  b})\cap [\oM_{g,r+b}(\text{Sym}^n(\mathbb{C}^2))]^{vir} \Big)\,  .
    \end{split}
\end{equation*}
 The genus 0 data of $\widetilde{\Omega}_{g,r}$ is exactly given by the formal Frobenius manifold $(\widetilde{\mathsf{V}},\widetilde{\star}, \widetilde{\eta})$ at 
the origin. Define the $\widetilde{t}\,[I_{(2, 1^{n-2})}]$-shifted CohFT by
\begin{eqnarray*}
 \widetilde{\Omega}_{g,r}^{\widetilde{t}\,[I_{(2, 1^{n-2})}]}(\gamma \otimes \cdots \otimes \gamma)&=&\sum_{m\geq 0}\frac{\widetilde{t}^m}{m!}\, \rho^{r+m}_{r*}\left(\widetilde{\Omega}_{g, r+m}(\gamma \otimes\cdots \otimes \gamma\otimes [I_{(2, 1^{n-2})}]^{\otimes m})\right) \\
 &=&
 \widetilde{\Omega}_{g,n}(\gamma \otimes \cdots \otimes \gamma)\Big|_{u\mapsto u+\widetilde{t}}\, ,
\end{eqnarray*}
where $\rho_r^{r+m}$ is given by \eqref{eqn:fgt_map}. 
%For the second equality, we have applied (\ref{eqn:div_eqn_sym}).

Let $\mathsf{R}^\mathsf{Sym}$ be the unique $\mathsf{R}$-matrix associated to the $\T$-equivariant Gromov-Witten theory of $\text{Sym}^n(\mathbb{C}^2)$. The shifted CohFT $\widetilde{\Omega}_{g,r}^{\widetilde{t}\,[I_{(2, 1^{n-2})}]}$ is obtained from the semisimple genus $0$ data $$\mathsf{F}_0^{\mathsf{Sym}^n(\mathbb{C}^2)}\left(\widetilde{t}\,[I_{(2, 1^{n-2})}]+\gamma\right)$$ by the unique $\mathsf{R}$-matrix $$\mathsf{R}^\text{Sym}\left(\widetilde{t}\, [I_{(2, 1^{n-2})}]+\gamma \right).$$
On the other hand, the $\mathsf{R}$-matrix $$\mathsf{R}^\mathsf{Sym}\Big|_{u\mapsto u+\widetilde{t}}$$ also generates $\widetilde{\Omega}_{g,r}^{\widetilde{t}\,[I_{(2, 1^{n-2})}]}$ from the same semisimple genus $0$ data. By the uniqueness of the $\mathsf{R}$-matrix in the Givental-Teleman classification, $$\mathsf{R}^\mathsf{Sym}\left(\widetilde{t}\,[I_{(2, 1^{n-2})}]+\gamma \right)=\mathsf{R}^\mathsf{Sym}\Big|_{u\mapsto u+\widetilde{t}}\ .$$
Hence, we obtain the following
differential equation:
\begin{equation}
\label{eqn:dif_eqn_sym}
\frac{\partial}{\partial\widetilde{t}}\, \mathsf{R}^\mathsf{Sym} =\frac{\partial}{\partial u}\mathsf{R}^\mathsf{Sym}\, .
\end{equation}

\begin{prop}\label{prop:R_mat_sym}
Two $\mathsf{R}$-matrices associated to $(\widetilde{\mathsf{V}},\widetilde{\star}, \widetilde{\eta})$ with coefficients in $\widetilde{\mathsf{A}}[[\widetilde{\V}^*]]$ which both satisfy the differential equation \eqref{eqn:dif_eqn_sym} must differ by right multiplication by $$\exp\left(\sum_{j=1}^\infty \widetilde{\mathsf{a}}_{2j-1}z^{2j-1} \right),$$
where each $\widetilde{\mathsf{a}}_{2j-1}$ is a diagonal matrix with coefficients in $\mathbb{Q}(t_1, t_2)$.
\end{prop}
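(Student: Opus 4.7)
The plan is to follow verbatim the template of Proposition \ref{kkqq22}, replacing the role of the pair $(t,q)$ with the pair $(\widetilde{t}, u)$. Let $\widetilde{\mathsf{R}}$ and $\widehat{\mathsf{R}}$ be two $\mathsf{R}$-matrices associated to $(\widetilde{\mathsf{V}}, \widetilde{\star}, \widetilde{\eta})$ with coefficients in $\widetilde{\mathsf{A}}[[\widetilde{\V}^*]]$, both satisfying \eqref{eqn:dif_eqn_sym}. First, invoke the general uniqueness property of $\mathsf{R}$-matrices recalled earlier in the section (from \cite[Prop. 1.1]{g1}) to write
$$
\widetilde{\mathsf{R}} \;=\; \widehat{\mathsf{R}} \cdot \exp\!\left( \sum_{j=1}^{\infty} \widetilde{\mathsf{a}}_{2j-1} \, z^{2j-1} \right),
$$
where each $\widetilde{\mathsf{a}}_{2j-1}$ is a diagonal matrix with coefficients in $\widetilde{\mathsf{A}} = \mathbb{Q}(t_1,t_2)[[u]]$.

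Next, I would differentiate both sides with respect to $\widetilde{t}$. Since $\widetilde{t}$ is the coordinate along $[I_{(2,1^{n-2})}]\in \widetilde{\V}$ and the $\widetilde{\mathsf{a}}_{2j-1}$ are functions only of $u$ (and the equivariant parameters), the exponential factor is independent of $\widetilde{t}$. Applying the differential equation \eqref{eqn:dif_eqn_sym} to $\widehat{\mathsf{R}}$ then gives
$$
\frac{\partial}{\partial \widetilde{t}}\, \widetilde{\mathsf{R}} \;=\; \frac{\partial}{\partial u}\, \widehat{\mathsf{R}} \cdot \exp\!\left( \sum_{j=1}^{\infty} \widetilde{\mathsf{a}}_{2j-1} \, z^{2j-1} \right).
$$
On the other hand, applying $\partial/\partial u$ directly to the product expression for $\widetilde{\mathsf{R}}$ yields
$$
\frac{\partial}{\partial u}\, \widetilde{\mathsf{R}} \;=\; \frac{\partial}{\partial u}\, \widehat{\mathsf{R}} \cdot \exp\!\left( \sum_{j=1}^{\infty} \widetilde{\mathsf{a}}_{2j-1}\, z^{2j-1} \right) \;+\; \widehat{\mathsf{R}} \cdot \frac{\partial}{\partial u} \exp\!\left( \sum_{j=1}^{\infty} \widetilde{\mathsf{a}}_{2j-1}\, z^{2j-1} \right).
$$
Using \eqref{eqn:dif_eqn_sym} for $\widetilde{\mathsf{R}}$ to equate the two expressions, and then using the invertibility of $\widehat{\mathsf{R}}$, I conclude
$$
\frac{\partial}{\partial u} \exp\!\left( \sum_{j=1}^{\infty} \widetilde{\mathsf{a}}_{2j-1}\, z^{2j-1} \right) \;=\; 0.
$$

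Finally, expanding the exponential and extracting coefficients order by order in $z$ shows inductively that each diagonal matrix $\widetilde{\mathsf{a}}_{2j-1}$ is independent of $u$, hence has coefficients in $\mathbb{Q}(t_1,t_2)$, as required. The argument is purely formal once the general uniqueness statement and the two differential equations are in place; there is no real obstacle, since the proof of Proposition \ref{kkqq22} already supplies the template and each step transfers mechanically under the substitution $t \leftrightarrow \widetilde{t}$ and $q\partial_q \leftrightarrow \partial_u$.
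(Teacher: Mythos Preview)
Your proof is correct and follows essentially the same approach as the paper's own proof: both invoke the general uniqueness statement from \cite[Prop.~1.1]{g1} to obtain the diagonal exponential factor with coefficients in $\widetilde{\mathsf{A}}$, then use the product rule together with the differential equation \eqref{eqn:dif_eqn_sym} for each of $\widetilde{\mathsf{R}}$ and $\widehat{\mathsf{R}}$, and the invertibility of $\widehat{\mathsf{R}}$, to conclude that the exponential factor is annihilated by $\partial/\partial u$. The paper's argument is identical in substance and, as you anticipated, is a direct transcription of the proof of Proposition~\ref{kkqq22} under $t\leftrightarrow\widetilde{t}$ and $q\partial_q\leftrightarrow\partial_u$.
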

\begin{proof}
Let $\widetilde{\mathsf{R}}$ and $\widehat{\mathsf{R}}$ be two $\mathsf{R}$-matrices associated to $(\widetilde{\mathsf{V}},\widetilde{\star}, \widetilde{\eta})$ which both satisfy (\ref{eqn:dif_eqn_sym}). Then 
$$\widetilde{\mathsf{R}}=\widehat{\mathsf{R}}\cdot \exp\left(\sum_{j=1}^\infty \widetilde{\mathsf{a}}_{2j-1}z^{2j-1} \right),$$ where each $\widetilde{\mathsf{a}}_{2j-1}$ is a diagonal matrix with coefficients in $\widetilde{\mathsf{A}}=\mathbb{Q}(t_1, t_2)[[u]]$. We will show
\begin{equation}\label{eqn:indep_u}
\frac{\partial}{\partial u} \widetilde{\mathsf{a}}_{2j-1}=0,\quad j\geq 1.
\end{equation}

By the product rule, we have 
\begin{equation*}
    \begin{split}
    \frac{\partial}{\partial \widetilde{t}}\, \widetilde{\mathsf{R}}
    =\frac{\partial}{\partial \widetilde{t}}\,\widehat{\mathsf{R}}\cdot \exp\left(\sum_{j=1}^\infty \widetilde{\mathsf{a}}_{2j-1}z^{2j-1} \right)+ \widehat{\mathsf{R}}\cdot \frac{\partial}{\partial \widetilde{t}}\, \exp\left(\sum_{j=1}^\infty \widetilde{\mathsf{a}}_{2j-1}z^{2j-1} \right)\, .
    \end{split}
\end{equation*}
Since $\widetilde{\mathsf{a}}_{2j-1}$ is independent of $\widetilde{t}$, the right side is 
\begin{equation*}
    \frac{\partial}{\partial \widetilde{t}}\, \widehat{\mathsf{R}}\cdot \exp\left(\sum_{j=1}^\infty \widetilde{\mathsf{a}}_{2j-1}z^{2j-1} \right).
\end{equation*}
Since $\widehat{\mathsf{R}}$ satisfies (\ref{eqn:dif_eqn_sym}), we obtain  $$\frac{\partial}{\partial u} \widetilde{\mathsf{R}}=\frac{\partial}{\partial u}\widehat{\mathsf{R}}\cdot \exp\left(\sum_{j=1}^\infty \widetilde{\mathsf{a}}_{2j-1}z^{2j-1} \right).$$

On the other hand, product rule also yields
\begin{equation*}
    \frac{\partial}{\partial u}\widetilde{\mathsf{R}}
    =\frac{\partial}{\partial u}\widehat{\mathsf{R}}\cdot \exp\left(\sum_{j=1}^\infty \widetilde{\mathsf{a}}_{2j-1}z^{2j-1} \right)+ \widehat{\mathsf{R}}\cdot \frac{\partial}{\partial u}\exp\left(\sum_{j=1}^\infty \widetilde{\mathsf{a}}_{2j-1}z^{2j-1} \right).
\end{equation*}
Comparing the two equations above, we find $$\widehat{\mathsf{R}}\cdot \frac{\partial}{\partial u}\exp\left(\sum_{j=1}^\infty \widetilde{\mathsf{a}}_{2j-1}z^{2j-1} \right)=0.$$
Since $\widetilde{\mathsf{R}}$ is invertible, we conclude $$\frac{\partial}{\partial u}\exp\left(\sum_{j=1}^\infty \widetilde{\mathsf{a}}_{2j-1}z^{2j-1} \right)=0\,$$
which implies
\eqref{eqn:indep_u}.
\end{proof}

As a corollary of Proposition 
\ref{prop:R_mat_sym}, we obtain
the following characterization of $\mathsf{R}^{\mathsf{Sym}}$
parallel to Proposition \ref{vv44}.

\begin{prop}\label{vv55}
$\mathsf{R}^{\mathsf{Sym}}$ is the unique $\mathsf{R}$-matrix
associated to $(\widetilde{\mathsf{V}},
\widetilde{\star}, \widetilde{\eta})$ which satisfies
\begin{equation*}
\frac{\partial}{\partial \widetilde{t}}\, \mathsf{R}^{\mathsf{Sym}} = \frac{\partial}{\partial u} \mathsf{R}^{\mathsf{Sym}}\, 
\end{equation*}
{\em and} has $u=0$ restriction which
equals the $\mathsf{R}$-matrix of 
the restricted {\em CohFT} $\widetilde{\Omega}_{g,r}\big|_{u=0}$.
\end{prop}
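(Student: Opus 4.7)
The plan is to derive Proposition \ref{vv55} as an immediate corollary of Proposition \ref{prop:R_mat_sym}, exactly mirroring the way Proposition \ref{vv44} is obtained from Proposition \ref{kkqq22}. First, one must verify that $\mathsf{R}^{\mathsf{Sym}}$ itself satisfies the two listed properties. The differential equation $\partial \mathsf{R}^{\mathsf{Sym}}/\partial \widetilde{t} = \partial \mathsf{R}^{\mathsf{Sym}}/\partial u$ is precisely \eqref{eqn:dif_eqn_sym}, which was derived earlier via the divisor equation and the uniqueness half of Givental--Teleman applied to the shifted CohFT $\widetilde{\Omega}^{\widetilde{t}\,[I_{(2,1^{n-2})}]}_{g,r}$. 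The second property is that the restriction $\mathsf{R}^{\mathsf{Sym}}|_{u=0}$ reconstructs $\widetilde{\Omega}_{g,n}|_{u=0}$: this follows because evaluating a CohFT at $u=0$ and evaluating its $\mathsf{R}$-matrix at $u=0$ are compatible operations, so $\mathsf{R}^{\mathsf{Sym}}|_{u=0}$ is an $\mathsf{R}$-matrix associated to the semisimple CohFT $\widetilde{\Omega}_{g,n}|_{u=0}$, and the Givental--Teleman theorem pins it down uniquely.

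For uniqueness, suppose $\widetilde{\mathsf{R}}$ and $\widehat{\mathsf{R}}$ are two $\mathsf{R}$-matrices associated to $(\widetilde{\mathsf{V}}, \widetilde{\star}, \widetilde{\eta})$, both satisfying the differential equation and both restricting at $u=0$ to the $\mathsf{R}$-matrix of $\widetilde{\Omega}_{g,n}|_{u=0}$. By Proposition \ref{prop:R_mat_sym},
$$\widetilde{\mathsf{R}} = \widehat{\mathsf{R}} \cdot \exp\left(\sum_{j=1}^{\infty} \widetilde{\mathsf{a}}_{2j-1} z^{2j-1}\right),$$
where each $\widetilde{\mathsf{a}}_{2j-1}$ is diagonal with coefficients in $\mathbb{Q}(t_1,t_2)$. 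The crucial point is that these coefficients are independent of $u$: this is what gives the uniqueness bite.

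Now I would restrict the identity above to $u=0$. Since $\widetilde{\mathsf{R}}|_{u=0} = \widehat{\mathsf{R}}|_{u=0}$ by hypothesis, and $\widehat{\mathsf{R}}|_{u=0}$ is invertible, I obtain $\exp(\sum_{j\geq 1} \widetilde{\mathsf{a}}_{2j-1} z^{2j-1})|_{u=0} = \mathbf{1}$. But the $\widetilde{\mathsf{a}}_{2j-1}$ do not depend on $u$ at all, so $\exp(\sum_{j\geq 1} \widetilde{\mathsf{a}}_{2j-1} z^{2j-1}) = \mathbf{1}$ everywhere, forcing $\widetilde{\mathsf{R}} = \widehat{\mathsf{R}}$.

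There is no real obstacle here beyond verifying that the $u=0$ restriction of an $\mathsf{R}$-matrix of a CohFT over $\widetilde{\mathsf{A}}$ is legitimately an $\mathsf{R}$-matrix of the specialized CohFT; this is routine once one observes that the defining relations (solving the QDE and satisfying the symplectic condition) specialize, and that Givental--Teleman uniqueness identifies this specialization with the canonical $\mathsf{R}$-matrix of $\widetilde{\Omega}_{g,n}|_{u=0}$. The whole argument is a direct transcription of the $\Hilb$ case, with the substitution rule $u\mapsto u+\widetilde{t}$ playing the role of $q\mapsto qe^t$.
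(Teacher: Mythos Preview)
Your proposal is correct and follows exactly the approach the paper intends: the paper states Proposition~\ref{vv55} as a corollary of Proposition~\ref{prop:R_mat_sym}, parallel to the derivation of Proposition~\ref{vv44} from Proposition~\ref{kkqq22}, and your argument spells out precisely those details (existence via \eqref{eqn:dif_eqn_sym} and compatibility of $u=0$ restriction with Givental--Teleman, uniqueness by restricting the $u$-independent diagonal factor to $u=0$).
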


\subsection{Proof of Theorem \ref{4444}}
The restricted
CohFT $\widetilde{\Omega}_{g,r}\Big|_{u=0}$ is defined over $\mathbb{Q}(t_1, t_2)$ and is semisimple with the $\mathsf{R}$-matrix given by $$\mathsf{R}^\text{Sym}\Big|_{u=0}.$$ 
By
Proposition \ref{vv55},  the unique $\mathsf{R}$-matrix $\mathsf{R}^\text{Sym}$ that generates the CohFT $\widetilde{\Omega}_{g,r}$ is the unique $\mathsf{R}$-matrix associated to the Frobenius manifold $(\widetilde{\mathsf{V}},\widetilde{\star}, \widetilde{\eta})$ which satisfies
\begin{equation*}
\frac{\partial}{\partial \widetilde{t}}\, \mathsf{R}^{\mathsf{Sym}} = \frac{\partial}{\partial u} \mathsf{R}^{\mathsf{Sym}}\, 
\end{equation*}
and has $u=0$ restriction equal to the $\mathsf{R}$-matrix of the CohFT $\widetilde{\Omega}_{g,r}\Big|_{u=0}$. By the definition of $\widetilde{\Omega}_{g,r}$, the $u=0$ restriction $\widetilde{\Omega}_{g,r}\Big|_{u=0}$ is equivalent to the collection of the invariants $$\big\langle \mu^1,\ldots, \mu^r \big\rangle_{g,0}^{\text{Sym}^n(\mathbb{C}^2)}$$
for
$g\geq 0$
and $\mu^1, ...,\mu^r\in \text{Part}(n)$.\qed

\noindent The $\mathsf{R}$-matrix $\mathsf{R}^\mathsf{Sym}\Big|_{u=0}$ is
written
explicitly (after the
coordinates of $\widetilde{\mathsf{V}}$
are set to 0) in Section \ref{subsec:sym_0}.

\section{Local theories of curves: stable maps}

\subsection{Local theories of curves}

Let $\overline{\mathcal{M}}_{g,r}$ be the moduli space of Deligne-Mumford stable
curves of genus $g$ with $r$ markings.{\footnote{We will always assume 
$g$ and $r$ satisfy the {\em stability} condition $2g-2+r>0$.}}
Let
$$ \mathcal{C} \rightarrow \overline{\mathcal{M}}_{g,r}\, $$
be the universal curve with sections
$$\mathsf{p}_1, \ldots, \mathsf{p}_r : \overline{\mathcal{M}}_{g,r}\rightarrow
\mathcal{C}$$
associated to the markings.
Let 
\begin{equation}\label{fddff}
\pi: \mathbb{C}^2\times \mathcal{C} \rightarrow \overline{\mathcal{M}}_{g,r}
\end{equation}
be the universal {\em local curve} over $\overline{\mathcal{M}}_{g,r}$. The torus $\T$ acts on
the $\com^2$ factor.
The Gromov-Witten and Donaldson-Thomas theories  
of the morphism $\pi$ are defined by the $\pi$-relative $\T$-equivariant virtual class
of the universal $\pi$-relative
 moduli spaces of stable maps and stable pairs.

\subsection{Stable maps}
We define a CohFT $\widetilde{\Lambda}$
via the moduli space of $\pi$-relative stable maps to the universal local
curve \eqref{fddff}
 based on the 
algebra
\begin{equation*}
\widetilde{\mathsf{A}}= \mathbb{Q}(t_1,t_2)[[u]]
\end{equation*}
and the corresponding free module
\begin{equation*}
 \widetilde{\V} = \mathcal{F}^n \otimes_ {\mathbb{Q}} \widetilde{\mathsf{A}}\, ,
\end{equation*}
where $\mathcal{F}^n$ is the Fock space with basis
indexed by $\text{Part}(n)$.
The inner product for the CohFT 
is 
$$\widetilde{\eta}(\mu,\nu)
=\frac{1}{(t_1 t_2)^{\ell(\mu)}} 
\frac{\delta_{\mu\nu}}{\zz(\mu)}\,. $$
The algebra, free module, and inner product for $\widetilde{\Lambda}$
are exactly the same as for the CohFT $\widetilde{\Omega}$ obtained
from the orbifold Gromov-Witten theory of ${\Sym}(\mathbb{C}^2)$.

Let $\mu^1,\ldots,\mu^r \in \text{Part}(n)$, and let
 $\Mbar^\bullet_h(\pi,\mu^1,\ldots,\mu^r)$ be the moduli space
of stable{\footnote{The subscript $\bullet$ indicates the domain
curves is possibly disconnected (but no connected
component is contracted to a point).
See \cite{bp}.}}
relative maps to the fibers of $\pi$,
$$\epsilon: \Mbar^\bullet_h(\pi,\mu^1,\ldots, \mu^r) \rightarrow \Mbar_{g,r}\, .$$
The fiber of $\epsilon$ over 
the moduli point
$$(C,p_1,\ldots,p_r) \in \Mbar_{g,r}\, $$
is the moduli space of stable maps of genus $h$
to $\com^2 \times C$ relative to the divisors
determined by the nodes and 
the markings of $C$
with boundary{\footnote{The cohomology weights of the boundary condition
are all the identity class.}}
condition
$\mu^i$ over 
the divisor $\com^2 \times p_i$.
The moduli space $\Mbar^\bullet_h(\pi,\mu^1,\ldots,\mu^r)$ has $\pi$-relative
virtual dimension
$$-n(2g-2+r)+\sum_{i=1}^r \ell(\mu^i)\, .$$

The CohFT $\widetilde{\Lambda}$ is defined
via the $\pi$-relative 
$\T$-equivariant
virtual class by
$$\widetilde{\Lambda}_{g,r}
(\mu^1\otimes \ldots\otimes \mu^r)
=\sum_{b\geq 0}
\frac{u^{b}}{b!}\epsilon_*\left(
\Big[\Mbar^\bullet_{h[b]}\big(\pi, \mu^1,\ldots ,\mu^r
\big)\Big]^{vir_\pi}\right). $$
%$$\widetilde{\Lambda}_{g,r}
%(\mu^1\otimes \ldots\otimes \mu^r)
%=\sum_{b\geq 0}
%\frac{u^{b}}{b!}\epsilon_*\left(
%\Big[\Mbar^\bullet_{h[b]}\big(\pi, \mu^1,\ldots ,\mu^r,
%\underbrace{(2,1^{n-2}),\ldots, (2,1^{n-2})}_{b}\big)\Big]^{vir_\pi}\right). $$
Here, summation index $b$ is the branch point number, so
$$2h[b]-2= b+n(2g-2+r)- \sum_{i=1}^r \ell(\mu^i)\, .$$
The moduli space of stable maps is
empty unless $h[b]$ is an integer.

\subsection{Axioms}
The defining axioms of a CohFT are listed
in Section \ref{ccfftt}. Axiom (i)
for $\widetilde{\Lambda}$ follows
from the symmetry of the construction.
Axiom (ii) is a consequence of the
degeneration formula of relative
Gromov-Witten theory.
Axiom (iii) requires a proof.

\begin{prop}\label{00qq}
The identity axiom holds: 
$$
\widetilde{\Lambda}_{g,r+1}(\mu^1 \otimes \cdots \otimes \mu^r \otimes \b1) = p^*\widetilde{\Lambda}_{g,r} (\mu^1 \otimes \cdots \otimes \mu^r)\, 
$$
where $p: \oM_{g,r+1} \to \oM_{g,r}$ is the forgetful map.
\end{prop}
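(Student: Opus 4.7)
The plan is to establish the identity by constructing a canonical isomorphism between the relevant relative moduli spaces, compatible with the $\pi$-relative virtual classes, and then applying flat base change under the forgetful morphism $p$.

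First, note that the unit $\mathbf{1} \in \widetilde{\V}$ corresponds to $|1^n\rangle \in \mathcal{F}^n$, and in relative Gromov-Witten theory the boundary partition $(1^n)$ at a divisor encodes the condition that the cover is étale over that divisor with $n$ simple preimages. Writing $\pi_{g,r}$ for the universal local curve over $\oM_{g,r}$, a direct calculation shows that the arithmetic genus $h[b]$ is preserved upon inserting $(1^n)$ at a new marking:
\begin{align*}
& b + n\big(2g-2+(r+1)\big) - \sum_{i=1}^r \ell(\mu^i) - \ell\big((1^n)\big) \\
&\qquad = b + n(2g-2+r) - \sum_{i=1}^r \ell(\mu^i).
\end{align*}
Hence the $b$-indexed summands in $\widetilde{\Lambda}_{g,r+1}(\mu^1\otimes\cdots\otimes\mu^r\otimes\mathbf{1})$ and $p^*\widetilde{\Lambda}_{g,r}(\mu^1\otimes\cdots\otimes\mu^r)$ correspond term by term, and the weighting $u^b/b!$ is the same on both sides.

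Second, I would exhibit for each $b$ a canonical isomorphism
$$\overline{\mathcal{M}}^\bullet_{h[b]}\big(\pi_{g,r+1},\mu^1,\ldots,\mu^r,(1^n)\big) \;\cong\; \overline{\mathcal{M}}^\bullet_{h[b]}\big(\pi_{g,r},\mu^1,\ldots,\mu^r\big) \times_{\oM_{g,r}} \oM_{g,r+1},$$
fitting into a commutative square with the forgetful maps $\epsilon$ on each side and $p$ along the bottom. The key point is that $(1^n)$ is the trivial ramification profile: the $n$ marked preimages of $p_{r+1}$ carry no ramification data and are therefore uniquely determined as the étale fiber of the cover over $p_{r+1}$. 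Given a point of the fiber product, one recovers these $n$ preimages canonically; conversely, given a relative map with the $(1^n)$ condition, forgetting the preimages (and contracting any resulting unstable components of the target expansion) yields the fiber-product datum. The $\pi$-relative obstruction theory is compatible under this identification, since the trivial condition imposes no nontrivial cotangent-line contributions. Flat base change then yields
$$\epsilon_* \big[\overline{\mathcal{M}}^\bullet_{h[b]}(\pi_{g,r+1},\mu^1,\ldots,\mu^r,(1^n))\big]^{vir_\pi} = p^* \epsilon_* \big[\overline{\mathcal{M}}^\bullet_{h[b]}(\pi_{g,r},\mu^1,\ldots,\mu^r)\big]^{vir_\pi},$$
and summing over $b$ with weight $u^b/b!$ completes the proof.

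The main obstacle will be the careful handling of boundary configurations of the base: when the new section $p_{r+1}$ collides with an existing marking $p_i$ or with a node of a fiber of $\mathcal{C}_{g,r}$, the universal curve $\mathcal{C}_{g,r+1}$ acquires an extra rational bubble relative to $p^*\mathcal{C}_{g,r}$. One must verify that the trivial $(1^n)$ ramification condition is transparent with respect to this bubble, so that Li's expansions of the relative target accommodate the bubble without altering the virtual class. I expect this to be resolved either by degeneration to configurations in which all markings are disjoint (reducing to the standard identity axiom fiberwise) or by a direct local analysis of the rubber calculus at a coincidence $p_i = p_{r+1}$, exploiting the transparency of trivial ramification in relative Gromov-Witten theory.
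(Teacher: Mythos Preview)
Your approach is different from the paper's, and the obstacle you flag at the end is exactly where your argument stalls. The claimed isomorphism
$$\overline{\mathcal{M}}^\bullet_{h[b]}\big(\pi_{g,r+1},\mu^1,\ldots,\mu^r,(1^n)\big) \;\cong\; \overline{\mathcal{M}}^\bullet_{h[b]}\big(\pi_{g,r},\mu^1,\ldots,\mu^r\big) \times_{\oM_{g,r}} \oM_{g,r+1}$$
is delicate precisely because the universal curve $\mathcal{C}_{g,r+1}$ is \emph{not} the pullback $p^*\mathcal{C}_{g,r}$: over the boundary locus where $p_{r+1}$ collides with another marking or a node, the former acquires an extra rational bubble. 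Your two suggested fixes (degeneration to disjoint markings, or a local rubber analysis) are not carried out, so the proof as written is incomplete.

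The paper sidesteps this entirely by working on the \emph{domain} rather than on the target. It starts from the standard identity for an extra marking $1$ on the source,
$$\Big[\Mbar^\bullet_{h,1}\big(\pi, \mu^1,\ldots ,\mu^r \big)\Big]^{vir_\pi} = p_\pi^*\Big[\Mbar^\bullet_{h}\big(\pi, \mu^1,\ldots ,\mu^r\big)\Big]^{vir_\pi},$$
observes that the image of the domain marking together with the $r$ relative points defines a map $\epsilon_1$ to $\oM_{g,r+1}$, and uses that the stable maps have degree $n$ to get
$$\epsilon_{1*}\Big[\Mbar^\bullet_{h,1}(\pi,\mu^1,\ldots,\mu^r)\Big]^{vir_\pi} = n\, p^*\epsilon_*\Big[\Mbar^\bullet_{h}(\pi,\mu^1,\ldots,\mu^r)\Big]^{vir_\pi}.$$
Then, universally bubbling off the image of the domain marking in the target and applying the degeneration formula yields
$$\epsilon_{1*}\Big[\Mbar^\bullet_{h,1}(\pi,\mu^1,\ldots,\mu^r)\Big]^{vir_\pi} = n\, \epsilon_*\Big[\Mbar^\bullet_{h}(\pi,\mu^1,\ldots,\mu^r,(1^n))\Big]^{vir_\pi}.$$
The two factors of $n$ cancel and the identity follows. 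This route never needs to compare $\mathcal{C}_{g,r+1}$ with $p^*\mathcal{C}_{g,r}$ directly: the degeneration formula absorbs the boundary behaviour you were worried about.
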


\begin{proof}
Consider first the
standard identity equation in Gromov-Witten theory
\begin{equation}\label{t23kk}
 \Big[\Mbar^\bullet_{h,1}\big(\pi, \mu^1,\ldots ,\mu^r \big)\Big]^{vir_\pi} =
p_\pi^*
\Big[\Mbar^\bullet_{h}\big(\pi, \mu^1,\ldots ,\mu^r\big)\Big]^{vir_\pi}\, ,
\end{equation}
where $1$ is a new marking on the domain curve
and $p_\pi$ is the map forgetting 1,
\begin{equation*}
 p_\pi:\Mbar^\bullet_{h,1}\big(\pi, \mu^1,\ldots ,\mu^r \big) \rightarrow
\Mbar^\bullet_{h}\big(\pi, \mu^1,\ldots ,\mu^r\big)\, .
\end{equation*}
By stability, the image of the
marking $1$ together with the
$r$ relative points yields a stable
$r+1$ pointed curve,
$$\epsilon_1:\Mbar^\bullet_{h,1}\big(\pi, \mu^1,\ldots ,\mu^r \big)
\rightarrow \Mbar_{g,r+1}\, .$$
Since the stable maps are of degree $n$,
we obtain
\begin{equation}\label{r23kk}
\epsilon_{1*}\left(
 \Big[\Mbar^\bullet_{h,1}\big(\pi, \mu^1,\ldots ,\mu^r \big)\Big]^{vir_\pi}\right) =
n p^*
 \epsilon_*\left(
\Big[\Mbar^\bullet_{h}\big(\pi, \mu^1,\ldots ,\mu^r\big)\Big]^{vir_\pi}\right)\, .
\end{equation}
The Proposition then follows from 
\eqref{r23kk} and the
relation
\begin{equation*}
\epsilon_{1*}\left(
\Big[\Mbar^\bullet_{h,1}\big(\pi, \mu^1,\ldots ,\mu^r \big)\Big]^{vir_\pi}\right) =
n\epsilon_*\left(
\Big[\Mbar^\bullet_{h}\big(\pi, \mu^1,\ldots ,\mu^r,
(1^{n})\big)\Big]^{vir_\pi}\right)\,
\end{equation*}
obtained from the
degeneration formula
by universally bubbling off the
image of the marking $1$ in the target.
\end{proof}

Finally, since $\widetilde{\Lambda}_{0,3}$
is nothing more than the Gromov-Witten theory of the local 3-fold $\com^2 \otimes\proj^1$ with 3 relative divisors
appearing in the original triangle
of equivalence of Section \ref{lifttr},
the property
$$\widetilde{\Lambda}_{0,3}(\mu^1\otimes \mu^2 \otimes \b1) = \widetilde{\eta}(v_1,v_2)\ $$
holds.

\subsection{The divisor equation}
The divisor equation for the CohFT  $\widetilde{\Lambda}$ will
play an important role.

\begin{prop}\label{g332}
The divisor equation holds:
\begin{equation*}
 \widetilde{\Lambda}
 _{g,r+1}
 \left(\mu^1 \otimes \cdots \otimes \mu^r\otimes (2,1^{n-2})\right)=
 \frac{\partial}{\partial u}
 \widetilde{\Lambda}
 _{g,r}
 (\mu^1 \otimes \cdots \otimes\mu^r)\, .
 \end{equation*} 
\end{prop}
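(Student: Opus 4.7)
The plan is to unpack both sides using the defining series of $\widetilde{\Lambda}$ as a sum over branch-point number $b$ weighted by $u^b/b!$, match $u^b$-coefficients after the differentiation on the right, and reduce the resulting term-by-term equality to the classical divisor equation in the relative Gromov--Witten theory of a fixed local curve. Since $\ell((2,1^{n-2}))=n-1$, the Riemann--Hurwitz dimension count gives $h_L[b]=h_R[b+1]$, so the indices shift by one. On the right, differentiation produces
\[
\frac{\partial}{\partial u}\widetilde{\Lambda}_{g,r}(\mu^1\otimes\cdots\otimes\mu^r)=\sum_{b\geq 0}\frac{u^b}{b!}\,\epsilon_{*}\bigl[\Mbar^\bullet_{h_R[b+1]}(\pi,\mu^1,\ldots,\mu^r)\bigr]^{vir_\pi},
\]
and on the left (with the implicit pushforward via $p:\oM_{g,r+1}\to\oM_{g,r}$ needed to live in the same cohomology),
\[
\sum_{b\geq 0}\frac{u^b}{b!}\,p_{*}\epsilon_{*}\bigl[\Mbar^\bullet_{h_L[b]}(\pi',\mu^1,\ldots,\mu^r,(2,1^{n-2}))\bigr]^{vir_{\pi'}},
\]
where $\pi'$ is the universal local curve over $\oM_{g,r+1}$. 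The normalizing factors $\tfrac{1}{b!}$ and $\tfrac{b+1}{(b+1)!}=\tfrac{1}{b!}$ match, so the claim reduces to equality of the two pushforwards of virtual classes for each $b$.

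The geometric content is that a $(2,1^{n-2})$ relative boundary condition at the $(r{+}1)$st marked point of the target is precisely the condition that one simple branch point of the cover has been marked. For a fixed smooth target $(C,p_1,\ldots,p_r)$, this is the classical branch-divisor identity in the relative Gromov--Witten theory of $\com^2\times C$: the branch morphism
\[
\operatorname{Br}:\Mbar^\bullet_{h}(\com^2\times C,\mu^1,\ldots,\mu^r)\longrightarrow \operatorname{Sym}^{b+1}(C)
\]
records the $b+1$ simple branch points of a cover, and its pullback along $C\hookrightarrow \operatorname{Sym}^{b+1}(C)$ coincides, with the combinatorial weight $b+1$ coming from the choice of which branch point is marked, with the virtual class of $\Mbar^\bullet_{h}(\com^2\times C,\mu^1,\ldots,\mu^r,(2,1^{n-2}))$. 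I would apply the version of this identity from \cite{bp}, which precisely balances the $(b+1)$ and $1/b!$ factors against $1/(b+1)!$ to give the stated fiberwise equality after pushforward.

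To propagate the identity from fixed smooth targets to the full $\pi$-relative setting, the branch morphism has a $\pi$-relative analog valued in $\operatorname{Sym}^{b+1}(\mathcal{C}/\oM_{g,r})$, and the $\pi$-relative virtual classes are constructed compatibly from obstruction theories that behave well in families. For nodal targets on the boundary of $\oM_{g,r}$, the degeneration axiom (ii) for $\widetilde{\Lambda}$, established above from the degeneration formula in relative Gromov--Witten theory, reduces the check to the smooth components of the normalization, on which the fiberwise argument applies. Combining the fiberwise identity with the family-wise compatibility and the boundary reduction gives the desired equality of pushforwards.

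The main obstacle will be the careful accounting of numerical factors in the passage from the fixed-target branch-morphism identity to the family-wise statement: one must verify that the generic degree $b+1$ of the forgetful map $\Mbar^\bullet_{h_L[b]}(\pi',\mu^1,\ldots,\mu^r,(2,1^{n-2}))\to\Mbar^\bullet_{h_R[b+1]}(\pi,\mu^1,\ldots,\mu^r)$ cancels correctly against the $1/b!$ normalization of the series, and that no additional contributions arise from automorphisms of the branch configuration. This is essentially combinatorial once the branch-morphism identity is invoked, but it requires care to confirm that the $\pi$-relative obstruction theories on the two moduli spaces are compatible under the forgetful comparison.
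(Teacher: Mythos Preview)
Your approach via the branch morphism is geometrically natural but differs substantially from the paper's proof and leaves a real gap. The paper does \emph{not} argue fiberwise and then extend to the family. Instead it works directly in the $\pi$-relative setting using two standard tools: the dilaton equation for a new domain marking $1$,
\[
\epsilon_*p_{\pi*}\Big(\psi_1\,\big[\Mbar^\bullet_{h,1}(\pi,\mu^1,\ldots,\mu^r)\big]^{vir_\pi}\Big)=\big(2h-2+\textstyle\sum_i\ell(\mu^i)\big)\,\epsilon_*\big[\Mbar^\bullet_{h}(\pi,\mu^1,\ldots,\mu^r)\big]^{vir_\pi},
\]
and a degeneration relation obtained by universally bubbling off the \emph{target} at the image of the marking $1$. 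That bubble produces two terms: the desired $(2,1^{n-2})$ insertion and a second term with boundary $(1^n)$ carrying a target $\psi$-class. The higher-genus contributions to the bubble vanish by the known equivariant cap computation; after applying $p_*$ and the identity axiom (Proposition~\ref{00qq}), the $(1^n)$ term contributes $n(2g-2+r)$, and the arithmetic $\big(2h-2+\sum\ell(\mu^i)\big)-n(2g-2+r)=b[h]$ yields exactly the needed equation. No fiberwise-to-family passage is required.

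The gap in your argument is precisely that passage. Equality of classes restricted to the open locus $\mathcal{M}_{g,r}$ does not imply equality in $H^*(\oM_{g,r})$, and invoking the splitting axiom (ii) only constrains restrictions to boundary strata---it does not let you reconstruct the full class from its generic-fiber value. To make your route rigorous you would need a genuine $\pi$-relative branch morphism to $\operatorname{Sym}^{b+1}(\mathcal{C}/\oM_{g,r})$ together with a compatibility of virtual classes under the forgetful map in that relative setting; this is plausible but is real work that \cite{bp} does not supply in the form you need. The paper's dilaton-plus-degeneration argument sidesteps this entirely, at the cost of the extra $(1^n)$ correction term and the cap input, which is why that term appears there and not in your outline.
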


\begin{proof}
Proposition \ref{g332} is equivalent to the following statement:
\begin{equation}\label{k23k}
p_*\epsilon_*\left(
\Big[\Mbar^\bullet_{h}\big(\pi, \mu^1,\ldots ,\mu^r,
(2,1^{n-2})\big)\Big]^{vir_\pi}\right) =
b[h]\cdot \epsilon_*\left(
\Big[\Mbar^\bullet_{h}\big(\pi, \mu^1,\ldots ,\mu^r\big)\Big]^{vir_\pi}\right)\, ,
\end{equation}
where $b[h]$ is defined by the relation
$$2h-2= b[h]+n(2g-2+r)- \sum_{i=1}^r \ell(\mu^i)\, .$$
In order to prove \eqref{k23k}, consider first the
standard dilaton equation
\begin{equation}\label{k23kk}
\epsilon_*p_{\pi*}\left(
\psi_1 \Big[\Mbar^\bullet_{h,1}\big(\pi, \mu^1,\ldots ,\mu^r \big)\Big]^{vir_\pi}\right) =
\big(2h-2+  \sum_{i=1}^r \ell(\mu^i)\big) \cdot
 \epsilon_*\left(
\Big[\Mbar^\bullet_{h}\big(\pi, \mu^1,\ldots ,\mu^r\big)\Big]^{vir_\pi}\right)\, .
\end{equation}
Here, $\psi_1$ is the cotangent line on the domain curve
of the map. Equation \eqref{k23k} then follows from 
\eqref{k23kk} and the
degeneration relation
\begin{multline*}
\epsilon_{1*}\left(
\psi_1 \Big[\Mbar^\bullet_{h,1}\big(\pi, \mu^1,\ldots ,\mu^r \big)\Big]^{vir_\pi}\right) =\\
\epsilon_*\left(
\Big[\Mbar^\bullet_{h}\big(\pi, \mu^1,\ldots ,\mu^r,
(2,1^{n-2})\big)\Big]^{vir_\pi}\right) \\
+n\psi \cdot \epsilon_*\left(
\Big[\Mbar^\bullet_{h}\big(\pi, \mu^1,\ldots ,\mu^r, (1^n) \big)\Big]^{vir_\pi}\right)\, 
\end{multline*}
obtained 
by universally bubbling off the
image of the marking $1$ in the target.
In the second term on the right in
the degeneration relation,
 $\psi$ is the cotangent line
at relative marking associated
to the boundary condition $(1^n)$.
%The second term follows then from
%the identity axiom together
%with the degree factor $n(2g-2+r)$.
There is no higher genus distribution
to the bubble as can be seen
from calculation of the equivariant cap.{\footnote{The 
equivariant cap invariant
for $(1^n)$
has been calculated in \cite[Section 2.5]{PPP}. The crucial
point is that 
$s_3$ occurs 
{\em only}
in the leading $u^{-2}$
summand. The leading summand
contributes 
the right most
term in the
above
degeneration formula.
All
other summands
of the equivariant
cap invariant
for $(1^n)$
have vanishing contributions.}}

We now apply $p_*$ to the degeneration
relation. Since $\epsilon p_\pi= p \epsilon_1$, we have
$$\epsilon_*p_{\pi*}\left(
\psi_1 \Big[\Mbar^\bullet_{h,1}\big(\pi, \mu^1,\ldots ,\mu^r \big)\Big]^{vir_\pi}\right) =
p_{*}\epsilon_{1*}\left(
\psi_1 \Big[\Mbar^\bullet_{h,1}\big(\pi, \mu^1,\ldots ,\mu^r \big)\Big]^{vir_\pi}\right)\, .$$
Using the identity axiom, we see
$$p_*\left(n\psi \cdot \epsilon_*\left(
\Big[\Mbar^\bullet_{h}\big(\pi, \mu^1,\ldots ,\mu^r, (1^n) \big)\Big]^{vir_\pi}\right)\right)=
n(2g-2+r) \cdot \epsilon_*\left(
\Big[\Mbar^\bullet_{h}\big(\pi, \mu^1,\ldots ,\mu^r \big)\Big]^{vir_\pi}\right)\, .$$
Equation \eqref{k23k} then follows.
\end{proof}

\subsection{Equivalence}
The CohFT $\widetilde{\Omega}$ obtained from
the orbifold
Gromov-Witten theory of $\Sym(\mathbb{C}^2)$
and the CohFT $\widetilde{\Lambda}$ are
equal in genus 0 (and hence both 
semisimple) by \cite{bg,bp}.
The two CohFTs
satisfy identical divisor equations.
The equality after restriction to $u=0$,
$$\widetilde{\Omega}_{g,r}(\mu^1,\ldots,\mu^r)|_{u=0} = 
\widetilde{\Lambda}_{g,r}(\mu^1,\ldots,\mu^r)|_{u=0}\, , \ \ \ 
g\geq 0\, , \ \
\mu^1, ...,\mu^r\in \text{Part}(n)\, ,$$
follows from a simple matching of moduli
spaces and obstruction theories.
By Theorem \ref{4444}, we conclude the following
equivalence.

\begin{prop} We have \label{pp12}
$\widetilde{\Omega}=\widetilde{\Lambda}$.
\end{prop}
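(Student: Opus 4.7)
The plan is to show that $\widetilde{\Omega}$ and $\widetilde{\Lambda}$ are two semisimple CohFTs with the same underlying data $(\widetilde{\V}, \widetilde{\star}, \widetilde{\eta})$ and identical $\mathsf{R}$-matrices, and then to conclude equality by the uniqueness part of the Givental--Teleman classification.

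First I would verify that $\widetilde{\Lambda}$ defines a semisimple CohFT with respect to $(\widetilde{\V}, \widetilde{\star}, \widetilde{\eta})$ whose genus $0$ potential agrees with that of $\widetilde{\Omega}$. The CohFT axioms (i) and (ii) are built into the construction via the symmetry of the moduli of $\pi$-relative stable maps and the degeneration formula, axiom (iii) is Proposition \ref{00qq}, and the matching of the unit comes from the $\widetilde{\Lambda}_{0,3}$ identification just above the statement. The genus $0$ equality
\[
\widetilde{\Lambda}_{0,r}(\mu^1 \otimes \cdots \otimes \mu^r) \;=\; \widetilde{\Omega}_{0,r}(\mu^1 \otimes \cdots \otimes \mu^r)
\]
is the content of the GW/GW genus $0$ local curves/orbifold correspondence from \cite{bg,bp}, so $\widetilde{\Lambda}$ and $\widetilde{\Omega}$ induce the same formal Frobenius manifold. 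In particular, $\widetilde{\Lambda}$ is semisimple, so the Givental--Teleman classification applies and associates a unique $\mathsf{R}$-matrix $\mathsf{R}^{\widetilde{\Lambda}}$.

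Next I would show that $\mathsf{R}^{\widetilde{\Lambda}}$ satisfies exactly the two conditions that characterize $\mathsf{R}^{\mathsf{Sym}}$ in Proposition \ref{vv55}. The first condition is the differential equation
\[
\frac{\partial}{\partial \widetilde{t}}\, \mathsf{R}^{\widetilde{\Lambda}} \;=\; \frac{\partial}{\partial u}\, \mathsf{R}^{\widetilde{\Lambda}}.
\]
This follows by repeating the divisor-equation argument of Section~4.3 verbatim, using Proposition \ref{g332} in place of the divisor equation on the orbifold side: the shifted CohFT $\widetilde{\Lambda}^{\widetilde{t}[I_{(2,1^{n-2})}]}$ equals $\widetilde{\Lambda}|_{u \mapsto u+\widetilde{t}}$, and by the uniqueness of the $\mathsf{R}$-matrix generating a given CohFT from given genus $0$ data, the shift in $\widetilde{t}$ and the shift in $u$ of the $\mathsf{R}$-matrix must coincide. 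The second condition is the $u=0$ restriction: one checks that $\mathsf{R}^{\widetilde{\Lambda}}|_{u=0}$ is the $\mathsf{R}$-matrix of $\widetilde{\Lambda}|_{u=0}$, and that
\[
\widetilde{\Lambda}_{g,r}(\mu^1 \otimes \cdots \otimes \mu^r)\big|_{u=0} \;=\; \widetilde{\Omega}_{g,r}(\mu^1 \otimes \cdots \otimes \mu^r)\big|_{u=0}
\]
for all $g$ and all $\mu^i\in \text{Part}(n)$. At $u=0$, the $\widetilde{\Lambda}$-moduli space forces the branch point number to vanish, so only \'etale covers of the base curves contribute, which are precisely the unramified configurations parameterized by the Hurwitz spaces $\overline{\mathcal{H}}^{\eta}_g$ appearing on the orbifold side; matching virtual classes and obstruction theories on both sides is a direct obstruction-theoretic comparison (the tangent-obstruction complex on each side reduces to Hodge-type contributions from the covering data twisted by $\text{Tan}_0(\com^2)$).

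The main obstacle, and the step deserving the most care, is the second comparison above: identifying $\widetilde{\Lambda}|_{u=0}$ with $\widetilde{\Omega}|_{u=0}$ at the level of CohFT classes (rather than merely numerical invariants), including a careful match of the automorphism factors $1/|\text{Aut}(\eta)|$ and of the Hodge-bundle twists attached to the connected components of degrees $\eta_1,\ldots,\eta_{\ell(\eta)}$. Once this is in hand, Proposition \ref{vv55} gives $\mathsf{R}^{\widetilde{\Lambda}} = \mathsf{R}^{\mathsf{Sym}} = \mathsf{R}^{\widetilde{\Omega}}$, and then the Givental--Teleman reconstruction, applied to the common genus $0$ data with the common $\mathsf{R}$-matrix, yields $\widetilde{\Lambda}_{g,r} = \widetilde{\Omega}_{g,r}$ for all $g,r$ in the stable range, which is the claimed equality.
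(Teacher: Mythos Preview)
Your proposal is correct and follows essentially the same route as the paper: match the genus $0$ data via \cite{bg,bp}, use the divisor equations (Proposition \ref{g332} on the local-curves side and \eqref{eqn:div_eqn_sym} on the orbifold side) to force the same differential constraint on the $\mathsf{R}$-matrices, match the $u=0$ restrictions by a direct identification of moduli spaces and obstruction theories, and then invoke the characterization of $\mathsf{R}^{\mathsf{Sym}}$. The paper phrases the last step via Theorem \ref{4444} rather than Proposition \ref{vv55}, but these are equivalent here, and the paper likewise treats the $u=0$ comparison as a straightforward moduli/obstruction-theory match without spelling out the Hurwitz-space details you flagged.
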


\section{Local theories of curves: stable pairs}

\subsection{Stable pairs}

We define a CohFT ${\Lambda}$
via the moduli space of $\pi$-relative $\T$-equivariant stable pairs on the universal local
curve, 
\begin{equation}\label{fddff2}
\pi: \mathbb{C}^2\times \mathcal{C} \rightarrow \overline{\mathcal{M}}_{g,r}\, ,
\end{equation}
 based on the 
algebra
\begin{equation*}
{\mathsf{A}}= \mathbb{Q}(t_1,t_2)[[q]]
\end{equation*}
and the corresponding free module
\begin{equation*}
 {\V} = \mathcal{F}^n \otimes_ {\mathbb{Q}} {\mathsf{A}}\, ,
\end{equation*}
where $\mathcal{F}^n$ is the Fock space with basis
indexed by $\text{Part}(n)$.
The inner product for the CohFT 
is 
$${\eta}(\mu,\nu)
=\frac{(-1)^{|\mu|-\ell(\mu)}}{(t_1 t_2)^{\ell(\mu)}} 
\frac{\delta_{\mu\nu}}{\zz(\mu)}\,. $$
The algebra, free module, and inner product for ${\Lambda}$
are exactly the same as for the CohFT ${\Omega}$ obtained
from Gromov-Witten theory of ${\Hilb}$.

Let $\mu^1,\ldots,\mu^r \in \text{Part}(n)$, and let
 $\mathsf{P}_k(\pi,\mu^1,\ldots,\mu^r)$ be the moduli space
of stable pairs
on the fibers of $\pi$,
$$\epsilon: \mathsf{P}_k(\pi,\mu^1,\ldots, \mu^r) \rightarrow \Mbar_{g,r}\, .$$
The fiber of $\epsilon$ over 
the moduli point
$$(C,p_1,\ldots,p_r) \in \Mbar_{g,r}\, $$
is the moduli space of stable pairs of Euler characteristic $k$
on $\com^2 \times C$ relative to the divisors
determined by the nodes and 
the markings of $C$
with boundary{\footnote{The cohomology weights of the boundary condition
are all the identity class.}}
condition
$\mu^i$ over 
the divisor $\com^2 \times p_i$.
The moduli space $\mathsf{P}_k(\pi,\mu^1,\ldots,\mu^r)$ has $\pi$-relative
virtual dimension
$$-n(2g-2+r)+\sum_{i=1}^r \ell(\mu^i)\, .$$

The CohFT ${\Lambda}$ is defined
via the $\pi$-relative 
$\T$-equivariant virtual class by
$${\Lambda}_{g,r}
(\mu^1\otimes \ldots\otimes \mu^r)
=\sum_{d\geq 0}
q^d\epsilon_*\left(
\Big[\mathsf{P}_{d+n(1-g)}\big(\pi, \mu^1,\ldots ,\mu^r \big)\Big]^{vir_\pi}\right). $$

\subsection{Axioms
and the divisor equation}
The defining axioms of a CohFT are listed
in Section \ref{ccfftt}. Axiom (i)
for ${\Lambda}$ follows
from the symmetry of the construction.
Axiom (ii) is a consequence of the
degeneration formula of relative
stable pairs theory.
The proof of Axiom (iii)
is parallel to the proof of Proposition
\ref{00qq}.

\begin{prop}
The identity axiom holds: 
$$
{\Lambda}_{g,r+1}(\mu^1 \otimes \cdots \otimes \mu^r \otimes \b1) = p^*{\Lambda}_{g,r} (\mu^1 \otimes \cdots \otimes \mu^r)\, 
$$
where $p: \oM_{g,r+1} \to \oM_{g,r}$ is the forgetful map.
\end{prop}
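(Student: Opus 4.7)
The plan is to argue in strict parallel with Proposition \ref{00qq}, replacing the moduli of relative stable maps with the moduli of relative stable pairs and the relative Gromov--Witten degeneration formula with the degeneration formula for relative stable pairs of Maulik--Pandharipande--Thomas. Since $\b1 \in \V$ corresponds to the Nakajima vector $|1^n\rangle$, it will suffice to establish, for every $d\geq 0$, the identity
\begin{equation*}
\epsilon'_*\left[\mathsf{P}_{d+n(1-g)}(\pi', \mu^1,\ldots,\mu^r, (1^n))\right]^{vir_{\pi'}} \,=\, p^*\,\epsilon_*\left[\mathsf{P}_{d+n(1-g)}(\pi, \mu^1,\ldots,\mu^r)\right]^{vir_{\pi}}\, ,
\end{equation*}
where $\pi':\com^2\times \mathcal{C}' \to \Mbar_{g,r+1}$ is the universal local curve over the $(r+1)$-pointed moduli and $\epsilon'$ is its associated stabilization morphism.

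I would then verify the above equality stratum-by-stratum over $\Mbar_{g,r+1}$. Over the open locus where $p$ does not contract any component, the new marking $\mathsf{p}_{r+1}$ is a smooth point of the stabilization $(C',p_1,\ldots,p_r)$, and the relative condition $(1^n)$ imposes only that the stable pair restrict transversely to the fiber $\com^2\times p_{r+1}$. The cap triviality for $(1^n)$ in relative stable pairs theory then identifies both the moduli space and its $\pi'$-relative perfect obstruction theory with the $p$-pullback of the base moduli space. Over the boundary locus where $p$ contracts an unstable rational bubble containing $\mathsf{p}_{r+1}$, the degeneration formula of \cite{mpt} splits the left side into a sum, indexed by a gluing partition $\nu$, of a main-component contribution on the stabilized curve and a rubber contribution on $\com^2\times \proj^1$ carrying the two relative conditions $\nu$ and $(1^n)$. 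The evaluation of the $(1^n)$-cap collapses the rubber contribution to the diagonal pairing on $\nu$, reconstructing $p^*\epsilon_*[\mathsf{P}]^{vir_\pi}$.

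The main obstacle is the explicit evaluation of the equivariant stable-pair cap with relative boundary condition $(1^n)$---the PT analog of the GW cap referenced in the footnote to Proposition \ref{g332}. The required statement is that, up to the Fock-space normalization already absorbed into $\b1$, this equivariant cap equals the identity endomorphism of $\V$ at leading order, while all remaining terms in its equivariant expansion carry classes of positive codimension on $\Mbar_{g,r}$ and hence vanish after application of $p^*$. This evaluation is part of the equivariant vertex package underlying the GW/DT correspondence for local curves in \cite{bp,op2}; once invoked, the two strata match and the identity axiom follows in direct parallel with the stable maps case of Proposition \ref{00qq}.
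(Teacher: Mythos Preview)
Your overall template---mirror Proposition \ref{00qq}, replacing relative stable maps by relative stable pairs and invoking the $(1^n)$-cap---is correct, but the execution differs from the paper's and contains a structural gap.

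The paper does not work directly with $\mathsf{P}(\pi',\ldots,(1^n))$ over $\Mbar_{g,r+1}$. Instead it introduces the universal target
\[
p_\pi:\mathcal{T}_k \longrightarrow \mathsf{P}_k(\pi,\mu^1,\ldots,\mu^r)
\]
on the $r$-pointed side, with the descendent $\mathrm{ch}_2(\mathcal{F})$ of the universal sheaf playing the role of the extra domain marking. This is the stable-pairs substitute for the $1$-pointed space $\Mbar^\bullet_{h,1}$ in the Gromov--Witten argument: pushing $\mathrm{ch}_2(\mathcal{F})\cap[\mathcal{T}_k]^{vir_\pi}$ down along $p_\pi$ produces the factor $n$, while the stabilization $\epsilon_1:\mathcal{T}_k\to\Mbar_{g,r+1}$ together with universal bubbling of the target produces the $(1^n)$ relative condition. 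The two resulting identities combine exactly as \eqref{r23kk} and the final display of Proposition \ref{00qq} do.

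The gap in your approach is the phrase ``stratum-by-stratum over $\Mbar_{g,r+1}$.'' An equality of cohomology classes on $\Mbar_{g,r+1}$ cannot be verified by checking it separately on the open locus where $p$ is an isomorphism and on the boundary where a rational tail is contracted; those restrictions do not determine the class. The reason the paper's argument closes is that both sides are realized as pushforwards from the single global object $\mathcal{T}_k$, so the comparison is made upstairs before pushing to $\Mbar_{g,r+1}$. Your boundary analysis via degeneration is essentially the content of the bubbling step, but without the universal-target intermediary there is nothing tying the open and boundary computations into one identity. Your description of the $(1^n)$-cap as ``identity at leading order with higher-codimension corrections vanishing under $p^*$'' is also not the relevant statement; what is actually used is that, after bubbling, only the $(1^n)$ term survives with its gluing factor.
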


\noindent
Instead of using the 1-pointed stable
map space, we consider the universal
target 
\begin{equation}\label{ft999}
p_\pi:\mathcal{T}_k\rightarrow \mathsf{P}_k(\pi, \mu^1,\ldots,\mu^r)
\end{equation}
with the universal curve expressed
as the descendent $\text{ch}_2(\mathcal{F})$ of the universal sheaf $\mathcal{F}$ on $\mathcal{T}_k$. Otherwise, the proof is identical.

The divisor equation for $\Lambda$ is
identical to the divisor equation
for the Gromov-Witten theory of 
$\Hilb$. The proof is parallel to the proof of 
Proposition \ref{g332}.

\begin{prop}\label{x332}
The divisor equation holds: 
$$
-{\Lambda}_{g,r+1}(\mu^1 \otimes \cdots \otimes \mu^r \otimes (2,1^{n-2})) = q\frac{\partial}{\partial q} {\Lambda}_{g,r} (\mu^1 \otimes \cdots \otimes \mu^r)\, .
$$
\end{prop}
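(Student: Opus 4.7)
The plan is to mirror Proposition \ref{g332} step for step, with the $1$-pointed stable map moduli space replaced by the universal target $p_\pi:\mathcal{T}_k\to \mathsf{P}_k(\pi,\mu^1,\ldots,\mu^r)$ of \eqref{ft999} and the cotangent class $\psi_1$ replaced by the descendent $\text{ch}_2(\mathcal{F})$ of the universal sheaf $\mathcal{F}$ on $\mathcal{T}_k$. First, Proposition \ref{x332} is equivalent to the virtual class identity
\begin{equation*}
p_*\epsilon_*\left(\Big[\mathsf{P}_{d+n(1-g)}\big(\pi, \mu^1,\ldots ,\mu^r,(2,1^{n-2})\big)\Big]^{vir_\pi}\right)=d\cdot\epsilon_*\left(\Big[\mathsf{P}_{d+n(1-g)}\big(\pi, \mu^1,\ldots ,\mu^r\big)\Big]^{vir_\pi}\right)
\end{equation*}
on $\oM_{g,r}$ for every $d\geq 0$, since summation against $q^d$ converts the factor $d$ into $q\,\partial/\partial q$.

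The first ingredient, playing the role of the dilaton identity \eqref{k23kk}, comes from the geometric fact that $\text{ch}_2(\mathcal{F})$ represents the universal support curve of the pair. Combined with the projection formula, the relation
\begin{equation*}
p_{\pi*}\text{ch}_2(\mathcal{F})=d\cdot\big[\mathsf{P}_{d+n(1-g)}(\pi,\mu^1,\ldots,\mu^r)\big],
\end{equation*}
and the factorization $\epsilon\,p_\pi=\epsilon_1$, where $\epsilon_1:\mathcal{T}_k\to \oM_{g,r+1}$ uses the point of the target as the new marked point on the base curve, this produces
\begin{equation*}
\epsilon_*p_{\pi*}\left(\text{ch}_2(\mathcal{F})\cap p_\pi^*\Big[\mathsf{P}_{d+n(1-g)}(\pi,\mu^1,\ldots,\mu^r)\Big]^{vir_\pi}\right)= d\cdot\epsilon_*\left(\Big[\mathsf{P}_{d+n(1-g)}(\pi,\mu^1,\ldots,\mu^r)\Big]^{vir_\pi}\right).
\end{equation*}

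The second ingredient, parallel to the degeneration identity in Proposition \ref{g332}, is the bubbling expansion
\begin{multline*}
\epsilon_{1*}\left(\text{ch}_2(\mathcal{F})\cap p_\pi^*\Big[\mathsf{P}_{d+n(1-g)}(\pi,\mu^1,\ldots,\mu^r)\Big]^{vir_\pi}\right)=\\
\epsilon_*\left(\Big[\mathsf{P}_{d+n(1-g)}(\pi,\mu^1,\ldots,\mu^r,(2,1^{n-2}))\Big]^{vir_\pi}\right)+n\psi\cdot\epsilon_*\left(\Big[\mathsf{P}_{d+n(1-g)}(\pi,\mu^1,\ldots,\mu^r,(1^n))\Big]^{vir_\pi}\right)
\end{multline*}
obtained by universally bubbling off the image of the new marked point in the base. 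The first term records the configuration in which the support of $\text{ch}_2(\mathcal{F})$ lies on the bubbled component, carrying the relative boundary $(2,1^{n-2})$, and the second records the trivial degeneration with relative boundary $(1^n)$, where $\psi$ is the cotangent class at the relative marking. Applying $p_*$, the identity axiom evaluates the $(1^n)$-bubble to $n(2g-2+r)$ times the unpointed virtual class; subtracting from the dilaton identity above then isolates the target formula.

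The main obstacle is verifying the bubbling expansion rigorously on the stable pairs side: one must show that the descendent $\text{ch}_2(\mathcal{F})$ distributes cleanly between the main component and the bubble, and that only the leading coefficient of the equivariant stable pairs cap for $(1^n)$ contributes, exactly as on the Gromov-Witten side. The cap calculation of \cite[Section 2.5]{PPP} together with the GW/DT correspondence for caps established in \cite{mpt} provides the analog of the input invoked in Proposition \ref{g332}.
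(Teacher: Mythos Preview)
Your strategy is the right one, but you have chosen the wrong descendent. The stable pairs analogue of the domain cotangent class $\psi_1$ is $\mathrm{ch}_3(\mathcal{F})$, not $\mathrm{ch}_2(\mathcal{F})$. The class $\mathrm{ch}_2(\mathcal{F})$ is the cycle class of the support curve; since $p_\pi:\mathcal{T}_k\to \mathsf{P}_k$ has relative dimension $3$, pushing forward a codimension-$2$ class does not produce a multiple of the fundamental class, and in any case the curve class carries the number $n$, not the number $d$. So the displayed identity $p_{\pi*}\mathrm{ch}_2(\mathcal{F})=d\cdot[\mathsf{P}_{d+n(1-g)}]$ is false both dimensionally and numerically. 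The actual dilaton equation in the stable pairs theory \cite[Section 3.2]{PRP} reads
\[
\epsilon_*p_{\pi*}\!\left(\mathrm{ch}_3(\mathcal{F})\,[\mathcal{T}_{d+n(1-g)}]^{vir_\pi}\right)
=\Big(d+n(1-g)-\tfrac{n}{2}(2-2g)\Big)\,\epsilon_*\!\left([\mathsf{P}_{d+n(1-g)}(\pi,\mu^1,\ldots,\mu^r)]^{vir_\pi}\right),
\]
and the bracket on the right equals $d$.

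There is a second discrepancy. Even accepting your two displayed identities, your arithmetic does not close: equating them and solving for the $(2,1^{n-2})$ term gives $d-n(2g-2+r)$, not $d$. What actually happens on the stable pairs side is that the $(1^n)$ bubble contribution \emph{vanishes}: the equivariant stable pairs cap for $(1^n)$ has no $s_3$ term (cf.\ the calculation in \cite[Section 2.5]{PPP}), so the degeneration relation for $\mathrm{ch}_3$ has only the single term
\[
\epsilon_{1*}\!\left(\mathrm{ch}_3(\mathcal{F})\,[\mathcal{T}_{d+n(1-g)}]^{vir_\pi}\right)
=\epsilon_*\!\left([\mathsf{P}_{d+n(1-g)}(\pi,\mu^1,\ldots,\mu^r,(2,1^{n-2}))]^{vir_\pi}\right).
\]
Combining this with the dilaton identity and $p\,\epsilon_1=\epsilon\,p_\pi$ immediately yields the required equality. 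In short, replace $\mathrm{ch}_2$ by $\mathrm{ch}_3$ throughout, drop the $(1^n)$ term from the bubbling expansion, and the argument becomes correct and in fact simpler than the Gromov--Witten analogue.
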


\begin{proof}
Instead of using the 1-pointed stable
map space with $\psi_1$, we consider the universal
target \eqref{ft999} 
with 
the descendent $\text{ch}_3(\mathcal{F})$ of the universal sheaf $\mathcal{F}$ on $\mathcal{T}_k$. The notation here will be parallel
to the stable maps case,
$$\epsilon_1:\mathcal{T}_k \rightarrow \Mbar_{g,r+1}\,
.$$

Proposition \ref{x332} is equivalent to the following statement:
\begin{equation*}%\label{xk23k}
p_*\epsilon_*\left(
\Big[\mathsf{P}_{d+n(1-g)}\big(\pi, \mu^1,\ldots ,\mu^r,
(2,1^{n-2})\big)\Big]^{vir_\pi}\right) =
d\cdot \epsilon_*\left(
\Big[\mathsf{P}_{d+n(1-g)}\big(\pi, \mu^1,\ldots ,\mu^r\big)\Big]^{vir_\pi}\right)\, .
\end{equation*}
Consider first the
 dilaton equation{\footnote{See \cite[Section 3.2]{PRP}.}} in the 
 theory of stable pairs
\begin{equation*}%\label{xk23kk}
\epsilon_*p_{\pi*}\left(
\ch_3(\mathcal{F}) \Big[\mathcal{T}_{d+n(1-g)}\Big]^{vir_\pi}\right) =
\left(d+n(1-g)-\frac{n}{2}(2-2g)\right)
\cdot
 \epsilon_*\left(
\Big[\mathsf{P}_{d+n(1-g)}\big(\pi, \mu^1,\ldots ,\mu^r\big)\Big]^{vir_\pi}\right)\, .
\end{equation*}
We have also the
degeneration relation
$$
\epsilon_{1*}\left(
\text{ch}_3(\mathcal{F}) \Big[\mathcal{T}_{d+n(1-g)}\Big]^{vir_\pi}\right) =
-\epsilon_*\left(
\Big[\mathsf{P}_{d+n(1-g)}\big(\pi, \mu^1,\ldots ,\mu^r,
(2,1^{n-2})\big)\Big]^{vir_\pi}\right) $$
obtained 
by universally bubbling off the
descendent in the target.
The contribution of
the boundary
condition $(1^n)$
vanishes as can be seen
from calculation of the equivariant cap.{\footnote{The equivariant cap invariant
for $(1^n)$
has been calculated in \cite[Section 2.5]{PPP}. For stable pairs, 
$s_3$ does not occur.
All
 summands
have vanishing contributions.}}
We conclude
\begin{eqnarray*}
p_*\epsilon_*\left(
\Big[\mathsf{P}_{d+n(1-g)}\big(\pi, \mu^1,\ldots ,\mu^r,
(2,1^{n-2})\big)\Big]^{vir_\pi}\right) &=&
p_*\epsilon_{1*}\left(
\text{ch}_3(\mathcal{F}) \Big[\mathcal{T}_{d+n(1-g)}\Big]^{vir_\pi}\right) \\
& = &
\epsilon_{*}p_{\pi*} \left(
\text{ch}_3(\mathcal{F}) \Big[\mathcal{T}_{d+n(1-g)}\Big]^{vir_\pi}\right)\\& =& 
d\cdot \epsilon_*\left(
\Big[\mathsf{P}_{d+n(1-g)}\big(\pi, \mu^1,\ldots ,\mu^r\big)\Big]^{vir_\pi}\right)\, ,
\end{eqnarray*}
completing the proof of the divisor equation.
\end{proof}

\subsection{Equivalence}
The CohFT ${\Omega}$ obtained from
the 
Gromov-Witten theory of $\Hilb$
and the CohFT ${\Lambda}$ are
equal in genus 0 (and hence both 
semisimple) by \cite{op29,op2}.
The two CohFTs
satisfy identical divisor equations.
The equality after restriction to $q=0$,
$${\Omega}_{g,r}(\mu^1,\ldots,\mu^r)|_{q=0} = 
{\Lambda}_{g,r}(\mu^1,\ldots,\mu^r)|_{q=0}\, , \ \ \ 
g\geq 0\, , \ \
\mu^1, ...,\mu^r\in \text{Part}(n)\, ,$$
follows from a simple matching of moduli
spaces and obstruction theories.
By Theorem \ref{ff11}, we conclude the following
equivalence.

\begin{prop} \label{pp15} We have
${\Omega}={\Lambda}$.
\end{prop}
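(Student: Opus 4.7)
The plan is to imitate the argument for Proposition \ref{pp12} verbatim, using Theorem \ref{ff11} (via its reformulation Proposition \ref{vv44}) in place of Theorem \ref{4444}. Since the ground ring $\mathsf{A}$, the free module $\mathsf{V}$, the inner product $\eta$, and the unit of the two CohFTs $\Omega$ and $\Lambda$ coincide by construction, it suffices to match them as systems of classes on $\overline{\mathcal{M}}_{g,r}$, and by the Givental--Teleman classification this in turn reduces to matching the associated $\mathsf{R}$-matrices on the genus $0$ Frobenius structure.

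First I would record the agreement of genus $0$ data. The genus $0$ parts of $\Omega$ and $\Lambda$ both realize the formal Frobenius manifold $(\mathsf{V},\star,\eta)$ attached to the $\T$-equivariant quantum cohomology of $\Hilb$: for $\Omega$ this is by definition, and for $\Lambda$ this is the genus $0$ PT/Hilb correspondence proven in \cite{op29,op2}. Consequently both CohFTs are semisimple and each admits a unique $\mathsf{R}$-matrix in the sense of Section \ref{tarhs}. Denote these $\mathsf{R}^{\Omega}=\mathsf{R}^{\mathsf{Hilb}}$ and $\mathsf{R}^{\Lambda}$.

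Next I would check that $\mathsf{R}^{\Lambda}$ satisfies the two characterizing properties of Proposition \ref{vv44}. For the differential equation, the argument of Section \ref{ddtt2} applies verbatim: Proposition \ref{x332} is the divisor equation for $\Lambda$, so the shift of $\Lambda$ along $-t\lvert 2,1^{n-2}\rangle$ is recovered from $\Lambda$ by the substitution $q\mapsto qe^{t}$, and uniqueness in the Givental--Teleman classification then gives
\begin{equation*}
-\frac{\partial}{\partial t}\mathsf{R}^{\Lambda} = q\frac{\partial}{\partial q}\mathsf{R}^{\Lambda}\,.
\end{equation*}
For the $q=0$ restriction, one observes that $\mathsf{P}_{n(1-g)}(\pi,\mu^1,\ldots,\mu^r)$ at $d=0$ parametrizes a product of $n$ relative points on $\com^2\times\mathcal{C}$ over $\overline{\mathcal{M}}_{g,r}$, whose $\pi$-relative virtual class matches, under the standard Nakajima identification, the class of degree $0$ maps from $\overline{\mathcal{M}}_{g,r}$ to $\Hilb$ computed by the Hodge-bundle formula of Theorem \ref{ff11}. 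This is a direct comparison of moduli and obstruction theories, exactly analogous to the $u=0$ identification used in the proof of Proposition \ref{pp12}, so $\Lambda_{g,r}|_{q=0}=\Omega_{g,r}|_{q=0}$ and hence $\mathsf{R}^{\Lambda}|_{q=0}=\mathsf{R}^{\Omega}|_{q=0}$.

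Given these two properties, Proposition \ref{vv44} (which is the content of Theorem \ref{ff11}) forces $\mathsf{R}^{\Lambda}=\mathsf{R}^{\mathsf{Hilb}}=\mathsf{R}^{\Omega}$, and the reconstruction formula \eqref{romega} together with the $\mathsf{T}$-translation \eqref{translation} then gives $\Lambda=\Omega$. The only step requiring genuine care is the degree $0$ identification of $\mathsf{P}_{n(1-g)}(\pi,\mu^1,\ldots,\mu^r)$ with the relevant stratum of constant maps to $\Hilb$ including virtual classes; everything else is a formal transport of the proof of Proposition \ref{pp12}, with Proposition \ref{x332} replacing Proposition \ref{g332} and Proposition \ref{vv44} replacing the uniqueness statement coming from Theorem \ref{4444}.
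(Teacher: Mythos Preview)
Your proposal is correct and follows essentially the same approach as the paper: match the genus $0$ data via \cite{op29,op2}, invoke the divisor equation for $\Lambda$ (Proposition \ref{x332}) to get the differential equation in $q$ and $t$, match the $q=0$ restrictions by a direct comparison of moduli spaces and obstruction theories, and then apply the uniqueness statement Proposition \ref{vv44} (equivalently Theorem \ref{ff11}) to identify the $\mathsf{R}$-matrices and hence the full CohFTs.

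One remark on the step you flag as requiring care: your description of the $d=0$ stable pairs space as ``a product of $n$ relative points'' is not quite the right picture. The paper identifies $\mathsf{P}_{n(1-g)}(\pi,\mu^1,\ldots,\mu^r)$ directly with $\prod_i \text{ev}_i^{-1}(\mu^i)\subset \overline{\mathcal{M}}_{g,r}(\Hilb,0)$, and the matching of obstruction bundles is not tautological: the stable pairs obstruction is $\mathsf{Tan}(\Hilb)^*\otimes\mathbb{E}_g^*\otimes[t_1+t_2]$ whereas the Gromov--Witten obstruction is $\mathsf{Tan}(\Hilb)\otimes\mathbb{E}_g^*$, and these agree only because of the $\mathsf{T}$-equivariant holomorphic symplectic identity $\mathsf{Tan}(\Hilb)^*\cong\mathsf{Tan}(\Hilb)\otimes[-t_1-t_2]$. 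This is the substantive content of the $q=0$ comparison, and it is the one place where the argument is genuinely different from (and more interesting than) the $u=0$ matching in Proposition \ref{pp12}.
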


Since the matching of obstruction theories
here in the $q=0$ case is interesting, we discuss the
matter in more detail.
The moduli spaces
$$\prod_{i=1}^r \text{ev}_i^{-1}(\mu^i) \subset \overline{\mathcal{M}}_{g,r}(\Hilb,0) 
\ \ \ \ \ \text{and} \ \ \ \ \
\mathsf{P}_{n(1-g)}(\pi, \mu^1, \ldots, \mu^r)$$
are isomorphic. The $\mathsf{T}$-equivariant 
obstruction bundle for the Gromov-Witten theory
of $\Hilb$ in the $q=0$ case is
\begin{equation}\label{zz88}
\mathsf{Tan}(\Hilb) \otimes \mathbb{E}^*_g\, .
\end{equation}
The $\mathsf{T}$-equivariant 
obstruction bundle{\footnote{The calculation
is by Serre duality and standard techniques.}} for $\pi$-relative stable pairs theory 
in the $q=0$ case is
\begin{equation}
\label{zz888}
\mathsf{Tan}(\Hilb)^* \otimes 
\mathbb{E}^*_g \otimes [t_1+t_2]\, , 
\end{equation}
where $[t_1+t_2]$ denotes a trivial bundle
of weight $t_1+t_2$.
The matching of \eqref{zz88} and \eqref{zz888} follows
from the holomorphic symplectic condition which
takes the following $\mathsf{T}$-equivariant form:
$$\mathsf{Tan}(\Hilb)^*  =
\mathsf{Tan}(\Hilb) \otimes [-t_1-t_2]\, .$$

\section{The restricted CohFTs}

\subsection{The Hilbert scheme $\Hilb$}\label{subsec:hilb_deg0}
Following \cite[Chapter VI, Section 10]{mac}, let
$$J_\lambda\in \mathcal{F}\otimes_\com \com(t_1,t_2)$$ be 
an integral form of the Jack symmetric function depending on the parameter $\alpha=1/\theta$. Define $$\mathsf{J}^\lambda=t_2^{|\lambda|}t_1^{\ell(\cdot )}J_\lambda|_{\alpha=-t_1/t_2}\, .$$
The vector
$\mathsf{J}^\lambda$ in Fock space corresponds to the $\T$-equivariant class
of the $\T$-fixed point of $\Hilb$ associated to $\lambda$. See also
\cite[Section 2.2]{op29}.

The Bernoulli numbers $B_{m}$ are defined by 
\begin{equation}\label{bern}
\frac{t}{e^t-1}=\sum_{m=0}^\infty \frac{B_m}{m!}t^m\, .
\end{equation}
For $\lambda \in \text{Part}(n)$, define 
\begin{equation*}
N_{2m-1, \lambda}(t_1,t_2)= \sum_{s\in \mathsf{D}_\lambda} \left(\frac{1}{(a(s)t_2-(l(s)+1)t_1)^{2m-1}}+\frac{1}{(l(s)t_1-(a(s)+1)t_2)^{2m-1}}\right)\, .
\end{equation*}
Here, the sum is
over all boxes $s$
in 
the Young diagram
$\mathsf{D}_\lambda$  corresponding to $\lambda$.
The standard
arm and leg lengths
are
$a(s)$ and $l(s)$
respectively.

\begin{prop}\label{prophilb}
After the coordinates
of $\mathsf{V}$ are set to 0,
the matrix $\mathsf{R}^\mathsf{Hilb}\Big|_{q=0}$ in the basis $$\Big\{\, 
\ \mathsf{J}^\lambda\, \Big|\, \lambda\in \text{\em Part}(n)\, \Big\}$$ of\, $\mathsf{V}$ is diagonal with entries 
\begin{equation*}
\exp\left(\sum_{m>0}\frac{B_{2m}}{2m(2m-1)}z^{2m-1}N_{2m-1, \lambda}(t_1,t_2) \right)\, .
\end{equation*}
\end{prop}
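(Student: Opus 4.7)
The plan is to compute $\mathsf{R}^{\mathsf{Hilb}}|_{q=0}$ directly from the $\T$-equivariant CohFT $\Omega_{g,r}|_{q=0}$, exploiting the fact that at $q=0$ the Gromov-Witten theory of $\Hilb$ reduces to constant maps and can be fully diagonalized by virtual $\T$-equivariant localization on the target $\Hilb$. The uniqueness of the $\mathsf{R}$-matrix reconstructing a semisimple CohFT (the uniqueness half of Givental--Teleman used already in Section~\ref{ddtt}) means it suffices to exhibit any symplectic $\mathsf{R}(z)$ whose Givental action on the topological part recovers $\Omega_{g,r}|_{q=0}$; the proposed diagonal matrix will then be forced to equal $\mathsf{R}^{\mathsf{Hilb}}|_{q=0}$.

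First I would identify the topological part. The small quantum product of $\Hilb$ at $q=0$ is the classical $\T$-equivariant cup product; after extending scalars to the algebraic closure this algebra is diagonalized precisely by the classes of the $\T$-fixed points, i.e.\ by the Jack vectors $\mathsf{J}^\lambda$ indexed by $\lambda \in \text{Part}(n)$. Normalizing with respect to the inner product $\eta$ of \eqref{inner_prod} produces the orthonormal idempotent frame in which the sought-for $\mathsf{R}$-matrix is diagonal, so the claim reduces to computing a single scalar series $\mathsf{R}^\lambda(z)$ for each $\lambda$.

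Next I would compute the CohFT at each fixed point. The moduli space of degree $0$ stable maps is $\oM_{g,r}\times\Hilb$ with virtual class obtained from the obstruction bundle $\mathbb{E}_g^*\otimes\mathsf{Tan}(\Hilb)$, so virtual $\T$-localization on $\Hilb$ expresses $\Omega_{g,r}|_{q=0}(\mathsf{J}^{\lambda_1}\otimes\cdots\otimes \mathsf{J}^{\lambda_r})$ as a sum indexed by $\lambda \in \text{Part}(n)$ of
\[
\delta_{\lambda_1\lambda}\cdots\delta_{\lambda_r\lambda}\cdot \prod_{s\in\mathsf{D}_\lambda}\frac{e(\mathbb{E}_g^*\otimes w_s^1)\,e(\mathbb{E}_g^*\otimes w_s^2)}{w_s^1\,w_s^2}\,,
\]
where $w_s^1=a(s)t_2-(l(s)+1)t_1$ and $w_s^2=l(s)t_1-(a(s)+1)t_2$ are the standard Ellingsrud--Str{\o}mme tangent weights of $\Hilb$ at the fixed point $\lambda$. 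This is exactly the shape of a Hodge-weighted CohFT localized at $\lambda$.

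The main step is then the Mumford/Faber--Pandharipande formula, which expresses
\[
\frac{e(\mathbb{E}_g^*\otimes w)}{w^g} \;=\; \exp\!\left(\sum_{m\geq 1}\frac{B_{2m}}{2m(2m-1)}\,\frac{1}{w^{2m-1}}\bigl(\kappa_{2m-1}+\text{boundary corrections}\bigr)\right),
\]
which in Givental's language is precisely the statement that the CohFT on $\oM_{g,r}$ obtained by integrating $e(\mathbb{E}_g^*\otimes w)/w$ against $\oM_{g,r}$ is the $\mathsf{R}$-matrix translate of the trivial one-dimensional theory by
\[
\mathsf{R}_w(z) \;=\; \exp\!\left(\sum_{m\geq 1}\frac{B_{2m}}{2m(2m-1)}\,\frac{z^{2m-1}}{w^{2m-1}}\right).
\]
Multiplying the contributions from each tangent weight at $\lambda$ (the Givental action being multiplicative on tensor products of one-dimensional theories), the diagonal $\lambda$-entry becomes $\prod_{s\in\mathsf{D}_\lambda}\mathsf{R}_{w_s^1}(z)\,\mathsf{R}_{w_s^2}(z)$, which is exactly
\[
\exp\!\left(\sum_{m\geq 1}\frac{B_{2m}}{2m(2m-1)}\,z^{2m-1}\,N_{2m-1,\lambda}(t_1,t_2)\right).
\]
Finally, one checks the symplectic condition $\mathsf{R}^\dagger(-z)\mathsf{R}(z)=\mathbf{1}$, which is immediate because only odd powers of $z$ appear in the exponent and the matrix is diagonal. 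By the uniqueness of Givental--Teleman, this candidate agrees with $\mathsf{R}^{\mathsf{Hilb}}|_{q=0}$.

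The main obstacle is purely bookkeeping: one must verify that the tangent weights at $\lambda$ as computed from the Nakajima/Ellingsrud--Str{\o}mme description of $\mathsf{Tan}_\lambda(\Hilb)$ are exactly the pair $\{w_s^1,w_s^2\}_{s\in \mathsf{D}_\lambda}$ appearing in the definition of $N_{2m-1,\lambda}$, and that the Jack-basis normalization matches the normalized idempotent frame in which the Givental $\mathsf{R}$-matrix is stated. Once those identifications are made, the rest is a direct application of the Mumford formula together with the uniqueness of the semisimple $\mathsf{R}$-matrix.
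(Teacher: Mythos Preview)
Your proposal is correct and follows essentially the same route as the paper: localize the degree~$0$ theory to the $\T$-fixed points, recognize the resulting CohFT at each $\lambda$ as a Hodge-twisted theory of a point, and read off the diagonal $\mathsf{R}$-matrix from the Givental symplectic operator governing that twist. The only cosmetic difference is packaging: the paper invokes the Coates--Givental quantum Riemann--Roch theorem \cite{cg} directly, whereas you unwind it as the Mumford/Faber--Pandharipande Hodge formula; also note that what you call the ``tangent weights'' $w_s^1,w_s^2$ are actually the negatives of the weights in \eqref{eqn:tangent_weights}, which is precisely the sign bookkeeping you flag at the end and which accounts for why your $\mathsf{R}_w(z)$ has $z^{2m-1}/w^{2m-1}$ rather than $(-z/\mathsf{w})^{2m-1}$.
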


\begin{proof}
The $\mathsf{R}$-matrix $\mathsf{R}^\mathsf{Hilb}\Big|_{q=0}$ is the $\mathsf{R}$-matrix classifying the CohFT $\Omega\big|_{q=0}$. The CohFT $\Omega\big|_{q=0}$ 
concerns
the $\mathsf{T}$-equivariant Gromov-Witten theory of $\Hilb$
in degree 0.
The $\mathsf{T}$-fixed locus $$\Hilb^\mathsf{T}\subset \Hilb$$ is a set of isolated points indexed by partitions of $n$ (see, for example, \cite[Section 5.2]{Nak}).  A partition $$\lambda=(\lambda_1, \lambda_2, \lambda_3, \ldots)\in \text{Part}(n)$$ defines a Young diagram $\mathsf{D}_\lambda$ whose $i$-th row has $\lambda_i$ boxes. The point in $\Hilb^\mathsf{T}$ indexed by $\lambda$ is defined by the monomial ideal $\mathcal{I}_\lambda\subset \mathbb{C}[x,y]$ generated by $$(y^{\lambda_1}, xy^{\lambda_2}, x^2y^{\lambda_3},\ldots, x^{i-1}y^{\lambda_i},\ldots)\, .$$
The Young diagram $\mathsf{D}_\lambda$
also determines the conjugate partition $$\lambda'=(\lambda_1', \lambda_2',\lambda'_3,\ldots)$$ obtained by setting $\lambda_i'$ to be the number of boxes in the $i$-th column of $\mathsf{D}_\lambda$. 

For $s=(i,j)\in \mathsf{D}_\lambda$, the box on the $i^{th}$ row and the $j^{th}$ column, define the arm
and leg lengths by $$
a(s)=\lambda_j'-i\ \ \ \ \text{and} \ \ \ \
l(s)=\lambda_i-j$$ 
respectively.
By \cite[Proposition 5.8]{Nak}, the 
weights
%{\footnote{Our
%conventions for
%$t_1$ and $t_2$
%are differ from
%the conventions
%of \cite{Nak}
%by a sign.}}
associated
to $s\in \mathsf{D}_\lambda$
of the $\mathsf{T}$-action on the tangent space $\text{Tan}_{\lambda}(\hilbnc)$ are
\begin{equation}\label{eqn:tangent_weights}
-a(s)t_2+(l(s)+1)t_1\,, \ \ \ -l(s)t_1+(a(s)+1)t_2\, .
\end{equation}
As $s$ varies in 
$\mathsf{D}_\lambda$,
we obtain $2n$ tangent
weights.

By virtual localization, the  $\mathsf{T}$-equivariant Gromov-Witten theory of $\Hilb$ 
in degree 0 
equals  the Gromov-Witten theory of $\Hilb^\mathsf{T}$ twisted\footnote{In the sense of \cite{cg}.} by the inverse $\mathsf{T}$-equivariant Euler class of the rank $2n$ bundle on which $\mathsf{T}$ acts with weights (\ref{eqn:tangent_weights}).

The $\mathsf{R}$-matrix $\mathsf{R}^\mathsf{Hilb}\Big|_{q=0}$ is the unique matrix which transforms the Gromov-Witten theory of 
$\Hilb^\mathsf{T}$ to the aforementioned twisted Gromov-Witten theory of $\Hilb^\mathsf{T}$. The latter theory consists of Hodge integrals over $\overline{\mathcal{M}}_{g,r}$, which have been calculated in \cite{FaP,mumford}. To identify the $\mathsf{R}$-matrix, we consider the twisted theory in the setting of \cite{cg} and apply the quantum Riemann-Roch theorem.  Then, $\mathsf{R}^\mathsf{Hilb}\Big|_{q=0}$ coincides with the symplectic transformation in the quantum Riemann-Roch theorem\footnote{Since $\mathsf{R}^\mathsf{Hilb}\Big|_{q=0}=\text{Id}+O(z)$, we may discard the scalar factors in quantum Riemann-Roch theorem.}, which is 
the claim of the
the Proposition.
\end{proof}

\subsection{The symmetric product $\Sym(\mathbb{C}^2)$}\label{subsec:sym_0}

For $\Sym(\CC^2)$, the
vector
%$|\widetilde{\mu}\rangle=i^{\ell(\mu)-|\mu|}
$|\mu\rangle\in \widetilde{\mathsf{V}}$ in Fock space corresponds
to the fundamental
class $[I_\mu]$ of the
component
$$I_\mu \subset
I\Sym(\CC^2)$$
of the inertial stack.

\begin{prop}\label{propsym}
After the coordinates of $\widetilde{\mathsf{V}}$ are set to 0, the matrix $\mathsf{R}^\mathsf{Sym}\Big|_{u=0}$ in the basis 
$$\Big\{\,|\mu\rangle\, \Big|\, \mu\in \text{\em Part}(n)\, \Big\}$$ 
of \, $\widetilde{\mathsf{V}}$
is diagonal with entries 
\begin{equation*}
\exp\left(-\sum_{m>0}\frac{B_{2m}}{2m(2m-1)} \sum_{i=1}^{\ell(\mu)}\left(\frac{1}{(\mu_it_1)^{2m-1}}+\frac{1}{(\mu_it_2)^{2m-1}}\right)z^{2m-1} \right).
\end{equation*}
\end{prop}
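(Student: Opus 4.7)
The plan is to mirror the proof of Proposition~\ref{prophilb}, replacing the geometry of the $\mathsf{T}$-fixed points of $\Hilb$ by the geometry of the $\mathsf{T}$-fixed points of the inertia stack $I\Sym^n(\mathbb{C}^2)$, and then to apply the quantum Riemann--Roch theorem in the orbifold/Hurwitz setting.

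First, by Proposition~\ref{vv55}, the matrix $\mathsf{R}^{\mathsf{Sym}}\big|_{u=0}$ is the unique $\mathsf{R}$-matrix classifying the restricted CohFT $\widetilde{\Omega}_{g,r}\big|_{u=0}$, which is the collection of unramified $\mathsf{T}$-equivariant orbifold Gromov--Witten invariants
$$\big\langle \mu^1,\ldots,\mu^r\big\rangle_{g,0}^{\Sym^n(\mathbb{C}^2)}\, .$$
The moduli space supporting these invariants is the space of stable maps to $\Sym^n(\mathbb{C}^2)$ with no free ramification, and the evaluation maps land in the inertia components $I_{\mu^i}$. I would next identify the $\mathsf{T}$-fixed loci: the only $\mathsf{T}$-fixed point of $I_\mu = [\mathbb{C}^{2n}_\sigma/C(\sigma)]$ is the origin, and the weights of the $\mathsf{T}$-action on its tangent space $\mathbb{C}^{2\ell(\mu)}$ are exactly $\mu_i t_1$ and $\mu_i t_2$ for $i=1,\ldots,\ell(\mu)$ (one pair $(t_1,t_2)$ rescaled by $\mu_i$ per cycle of $\sigma$). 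This is the direct analogue of \eqref{eqn:tangent_weights}.

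Second, virtual localization together with the standard identification of unramified stable maps to $\Sym^n(\mathbb{C}^2)$ with admissible covers realizes the restricted theory as a Hurwitz-space theory: the $\mathsf{T}$-fixed locus with insertion $\mu$ is the space $\overline{\mathcal{H}}^\mu_g$ of admissible covers with $\ell(\mu)$ components of genera $\mu_i(g-1)+1$, and the virtual class is twisted by the inverse Euler class of the $\mathsf{T}$-equivariant obstruction bundle. Serre duality identifies this obstruction with $\bigoplus_{i=1}^{\ell(\mu)} \mathbb{E}^{*}_{\mu_i(g-1)+1}\otimes \bigl(\text{Tan}_0(\mathbb{C}^2)\bigr)$, pulled back from each component of the cover; the resulting integrals are precisely those appearing in Theorem~\ref{4444}, which are Hodge integrals on $\overline{\mathcal{M}}_{h_i}$ for $h_i=\mu_i(g-1)+1$, and are controlled by Mumford's formula and the Faber--Pandharipande calculation of Hodge integrals.

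Third, I would apply the quantum Riemann--Roch theorem of Coates--Givental to the resulting twisted theory. The input data is the collection of characters $\{\mu_i t_1,\mu_i t_2\}_{i=1}^{\ell(\mu)}$ and the twisting class is the dual Hodge bundle; since $\mathsf{R}^{\mathsf{Sym}}\big|_{u=0}=\mathrm{Id}+O(z)$, the scalar prefactors in quantum Riemann--Roch can be discarded, and the resulting symplectic transformation is the diagonal exponential of a series in $z$ whose coefficients are Bernoulli numbers times inverse powers of the character weights. The minus sign in the exponent (compared to Proposition~\ref{prophilb}) comes from the fact that the obstruction is $\mathbb{E}^{*}$ rather than $\mathbb{E}$, i.e.\ from the sign $(-1)^{m-1}$ in the expansion $\operatorname{ch}_{2m-1}(\mathbb{E}^{*})=-\operatorname{ch}_{2m-1}(\mathbb{E})$, combined with the standard $\tfrac{B_{2m}}{2m(2m-1)}$ coefficient.

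The main obstacle I anticipate is bookkeeping rather than substance: one has to match the combinatorics of admissible covers (and the automorphism factors $|\mathrm{Aut}(\mu)|$) with the normalized-idempotent basis used in the Givental--Teleman classification, and to be careful about the $\mathbb{E}$ versus $\mathbb{E}^{*}$ twist producing exactly the single overall minus sign asserted in the statement. Once these identifications are made precise, the formula falls out of quantum Riemann--Roch exactly as in Proposition~\ref{prophilb}.
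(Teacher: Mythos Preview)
Your overall strategy---identify the restricted CohFT as a twisted theory and read off the $\mathsf{R}$-matrix from quantum Riemann--Roch---is the same as the paper's, but two of your key computations are wrong, and the detour through Hurwitz spaces is not what makes the argument go.

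First, your tangent-weight claim is incorrect. The tangent space to $I_\mu=[\mathbb{C}^{2n}_\sigma/C(\sigma)]$ at the origin is the $\sigma$-invariant subspace $\mathbb{C}^{2n}_\sigma$; for a cycle of length $\mu_i$ acting on $(\mathbb{C}^2)^{\mu_i}$, the invariant part is the diagonal copy of $\mathbb{C}^2$ with $\mathsf{T}$-weights $t_1,t_2$, not $\mu_i t_1,\mu_i t_2$. So the weights on $I_\mu$ are just $t_1,t_2$, each with multiplicity $\ell(\mu)$. The factors of $\mu_i$ you need do \emph{not} come from rescaled tangent weights.

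Second, the correct tool is Tseng's \emph{orbifold} quantum Riemann--Roch, not Coates--Givental. The paper views $\widetilde{\Omega}\big|_{u=0}$ as the Gromov--Witten theory of $BS_n$ twisted by the inverse $\mathsf{T}$-equivariant Euler class of the trivial rank $2n$ bundle with weights $t_1,t_2$ (each repeated $n$ times). The orbifold QRR operator is diagonal in the $\{[I_\mu]\}$ basis, and on the sector $I_\mu$ the formula involves the $\sigma$-eigenspace decomposition of $(\mathbb{C}^2)^n$: the eigenvalues are $\exp(2\pi\sqrt{-1}\,l/\mu_i)$ for $0\le l\le \mu_i-1$, each with a rank $2$ eigenspace of $\mathsf{T}$-weights $t_1,t_2$. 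This produces sums of Bernoulli \emph{polynomials} $B_m(l/\mu_i)$ at fractional arguments; the factors of $\mu_i$ then emerge from the identity
\[
\sum_{l=0}^{r-1} B_m\!\left(\tfrac{l}{r}\right)=\frac{B_m}{r^{m-1}},
\]
which converts the orbifold expression into the claimed diagonal entries. This identity is the step your proposal is missing entirely.

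Finally, your sign discussion is a red herring: written in terms of the actual tangent weights $\mathsf{w}$, the formula in Proposition~\ref{prophilb} is also $\exp\bigl(-\sum_{m>0}\frac{B_{2m}}{2m(2m-1)}\sum_{\mathsf{w}}\mathsf{w}^{-(2m-1)}z^{2m-1}\bigr)$, so there is no sign discrepancy to explain. The Hurwitz/admissible-cover description you sketch is geometrically correct for the invariants themselves, but it does not directly place you in a Coates--Givental twisted theory on $\overline{\mathcal{M}}_{g,r}$, and carrying it through would require substantially more work than simply invoking orbifold QRR.
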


\begin{proof}
The $\mathsf{R}$-matrix $\mathsf{R}^\mathsf{Sym}\Big|_{u=0}$ classifies the restricted CohFT $\widetilde{\Omega}_{g,r}\Big|_{u=0}$. By   definition, the CohFT $\widetilde{\Omega}_{g,r}\Big|_{u=0}$ is obtained
from  the $\mathsf{T}$-equivariant Gromov-Witten theory of $\text{Sym}^n(\mathbb{C}^2)$ -- the Gromov-Witten theory of the classifying orbifold $BS_n$ twisted\footnote{In the sense of \cite{Tseng}.} by the inverse $\mathsf{T}$-equivariant Euler class of the rank $2n$ trivial bundle on which $\mathsf{T}$ acts with weights $t_1$ and $t_2$ (each appearing $n$ times). By the orbifold quantum Riemann-Roch theorem of \cite{Tseng}, the twisted
theory is obtained from the Gromov-Witten theory of $BS_n$ by the action of a symplectic operator $\mathsf{Q}$. The operator $\mathsf{Q}$ coincides with $\mathsf{R}^\text{Sym}\Big|_{u=0}$ in the basis $\{[I_\mu]\}_{\mu\in \text{Part}(n)}$ of $$H^*_\mathsf{T}(IBS_n)=H^*_\mathsf{T}(I\Sym(\mathbb{C}^2))\, .$$

Next, we identify $\mathsf{Q}$. Consider the conjugacy class 
$\text{Conj}(\mu)$ of $S_n$ corresponding to the partition $$\mu=(\mu_1,\mu_2,\mu_3,\ldots)\, .$$ Let $\sigma\in S_n$ 
be an element of $\text{Conj}(\mu)$.
Then,
$\sigma$ can be written as a product of disjoint cycles of lengths $\mu_i$. The vector space $(\mathbb{C}^2)^{\oplus n}$ decomposes into a direct sum of $\sigma$-eigenspaces. The eigenvalues and ranks of these eigenspaces depend only on the conjugacy class, not $\sigma$. The eigenvalues are $$\exp\left(2\pi\sqrt{-1}\frac{l}{\mu_i}\right), \quad 0\leq l\leq \mu_i-1,\quad i=1,2,3, \ldots .$$ Each 
such{\footnote{If $\mu$
has equal parts,
the eigenspaces
increase by the multiplicity factor.}}
eigenvalue has a rank $2$ eigenspace with $\T$-weights $t_1, t_2$. 

By \cite{Tseng}, the operator $\mathsf{Q}$ is diagonal in the basis $\{ [I_\mu]\}$ of $H^*_\mathsf{T}(IBS_n)$ 
with entry 
at $[I_\mu]$ given by
\begin{equation*}
\exp\left(\sum_{m>1}\frac{-1}{m(m-1)}\sum_{i=1}^{\ell(\mu)} \left\{B_m(0)+B_m\left(\frac{1}{\mu_i}\right)+
\ldots+B_m\left(\frac{\mu_i-1}{\mu_i}\right)\right\}z^{m-1}\left(\frac{1}{t_1^{m-1}}+\frac{1}{t_2^{m-1}}\right) \right).
\end{equation*}
Here, $B_m(x)$ is the
Bernoulli polynomial,
$$\frac{te^{xt}}{e^t-1}=\sum_{m=0}^\infty B_m(x)\frac{t^m}{m!}\, .$$
From the identity $$\sum_{l=0}^{r-1}\frac{te^{tl/r}}{e^t-1}=\frac{t}{e^t-1}\frac{e^t-1}{e^{t/r}-1}=\frac{t/r}{e^{t/r}-1}\cdot r,$$
and the definition \eqref{bern} of the Bernoulli
numbers, we see
$$\sum_{l=0}^{r-1}B_m
\left(\frac{l}{r}\right)=\frac{B_m}{r^{m-1}}\, .$$
The above
expression for
the diagonal elements
of $\mathsf{Q}$ 
can be written as
\begin{equation*}
%\label{qrr_entry2}
\exp\left(-\sum_{m>0}\frac{B_{2m}}{2m(2m-1)} \sum_{i=1}^{\ell(\mu)}\left(\frac{1}{(\mu_it_1)^{2m-1}}+\frac{1}{(\mu_it_2)^{2m-1}}\right)z^{2m-1} \right)
\end{equation*}
which completes the proof.
\end{proof}

The product structure on $H_{\mathsf{T}}^*(I\text{Sym}^n\mathbb{C}^2)$ is described in \cite[Section 3.3]{bg}
in terms of the representation
theory of the symmetric group.
The {\em normalized} idempotents of $H_{\mathsf{T}}^*(I\text{Sym}^n\mathbb{C}^2)$ are 
\begin{equation}\label{eqn:idemp}
\mathsf{I}^\lambda=\sum_{\mu\in \text{Part(n)}}\chi_{\lambda}(\mu) I_\mu (t_1t_2)^{\ell(\mu)/2},
\end{equation}
where $\chi_{\lambda}(\mu)$ is the character of $S_n$.
The action of the matrix $\mathsf{R}^\text{Sym}\Big|_{u=0}$ in the idempotent basis of $H_{\mathsf{T}}^*(I\text{Sym}^n\mathbb{C}^2)$ is given by  
\begin{multline}
\mathsf{R}^\mathsf{Sym}\Big|_{u=0}(\mathsf{I}^\lambda)=\\
\sum_{\mu}\chi_{\lambda}(\mu) \exp\left(-\sum_{m>0}\frac{B_{2m}}{2m(2m-1)} \sum_{i=1}^{\ell(\mu)}\left(\frac{1}{(\mu_it_1)^{2m-1}}+\frac{1}{(\mu_it_2)^{2m-1}}\right)z^{2m-1} \right)
 (t_1t_2)^{\ell(\mu)/2}
 \, \Big|\mu\Big\rangle
 \, 
\end{multline}
written in $\widetilde{\mathsf{V}}$. Under the identification{\footnote{We
use $\sqrt{-1}$ here instead of $i$ for clarity in the
formulas (since $i$
also occurs as an index of summation).}} $\mathsf{V}\to \widetilde{\mathsf{V}}$ given by (\ref{eqn:mu_tilde}), we have
\begin{multline}\label{eqn:R_sym0}
\mathsf{R}^\mathsf{Sym}\Big|_{u=0}(\mathsf{I}^\lambda)=\\
\sum_{\mu}\chi_{\lambda}(\mu) \exp\left(-\sum_{m>0}\frac{B_{2m}}{2m(2m-1)} \sum_{i=1}^{\ell(\mu)}\left(\frac{1}{(\mu_it_1)^{2m-1}}+\frac{1}{(\mu_it_2)^{2m-1}}\right)z^{2m-1} \right)\\
\vspace{5pt}
\cdot
\sqrt{-1}^{\ell(\mu)-|\mu|} (t_1t_2)^{\ell(\mu)/2}\,
\Big|\mu\Big\rangle
\,
\end{multline}
written in $\mathsf{V}$.

\section{Quantum differential equations}
We recall the quantum differential equation for $\hilbnc$ calculated in \cite{op} and further studied in \cite{op29}. We follow here the exposition \cite{op,op29}.

Consider the Fock space
introduced
in Section \ref{fsf}
(after extension
of scalars to $\com$),
\begin{equation}\label{ffkk}
\cF \otimes _{\mathbb C} {\mathbb C}[t_1,t_2]\stackrel{\sim}{=} 
\bigoplus_{n\geq 0} H_{\T}^*(\Hilb,{\mathbb C})\, ,
\end{equation}
 freely generated by commuting creation operators $\alpha_{-k}$
 for $k\in \mathbb{Z}_{>0}$ acting on the vacuum vector $v_{\emptyset}$. 
 The intersection pairing on the $\T$-equivariant cohomology of $\hilbnc$ induces a pairing
 on Fock space,
  $$\eta( \mu,\nu)=\frac{(-1)^{|\mu|-\ell(\mu)}}{(t_1t_2)^{\ell(\mu)}}\frac{\delta_{\mu\nu}}{\mathfrak{z}(\mu)}\, .$$
  The paper \cite{op29} also uses a Hermitian pairing $\<-,-\>_{H}$ on the Fock space \eqref{ffkk} defined by the three following
  properties
\begin{enumerate}
\item[$\bullet$]
$\langle \mu|\nu\rangle_{H}=\frac{1}{(t_1t_2)^{\ell(\mu)}}\frac{\delta_{\mu\nu}}{\mathfrak{z}(\mu)}$ ,

\vspace{7pt}
\item[$\bullet$]
$\langle a f, g\rangle_H=a\langle f,g\rangle_H, \quad a\in \mathbb{C}(t_1, t_2)$,

\vspace{7pt}
\item[$\bullet$]
$\langle f,g\rangle_H=\overline
{\langle g,f\rangle}_H, \text{ where } \overline{a(t_1,t_2)}=a(-t_1, -t_2)$ .
\end{enumerate}

The quantum differential equation (QDE) for the Hilbert schemes of points on $\mathbb{C}^2$ is
given by
\begin{equation}\label{qde}
q\frac{d}{dq}\Phi=\mathsf{M}_D\Phi\, , \quad \Phi\in \mathcal{F}\otimes_\mathbb{C} \mathbb{C}(t_1, t_2),
\end{equation}
where $\mathsf{M}_D$
is the operator of quantum multiplication by $D$,
\begin{multline}\label{operator_M}
\mathsf{M}_D=
(t_1+t_2)\sum_{k>0}\frac{k}{2}\frac{(-q)^k+1}{(-q)^k-1}\alpha_{-k}\alpha_k
\, -\, \frac{t_1+t_2}{2}\frac{(-q)+1}{(-q)-1}|\cdot |
\\
+\frac{1}{2}\sum_{k,l>0}\Big[t_1t_2\alpha_{k+l}\alpha_{-k}\alpha_{-l}-\alpha_{-k-l}\alpha_k\alpha_l\Big]\, .
\end{multline}
While the quantum differential equation
\eqref{qde} has a regular singular point at $q=0$, the point $q=-1$ is regular.

The quantum differential equation considered in Givental's theory contains a parameter $z$. In the case of the Hilbert schemes of points on $\mathbb{C}^2$, the QDE with parameter $z$ is \begin{equation}\label{qde_z}
zq\frac{d}{dq}\Phi=\mathsf{M}_D\Phi, \quad \Phi\in \mathcal{F}\otimes_\mathbb{C} \mathbb{C}(t_1, t_2)\, .
\end{equation}
For $\Phi\in \mathcal{F}\otimes_\mathbb{C}\mathbb{C}(t_1,t_2)$, define 
$$\Phi_z=\Phi\left(\frac{t_1}{z}, \frac{t_2}{z}, q\right)\, .$$ Define $\Theta\in \text{Aut}(\mathcal{F})$ by $$\Theta|\mu\rangle=z^{\ell(\mu)}|\mu\rangle\, .$$
The following Proposition allows us to use the results in \cite{op29}.

\begin{prop}\label{ll44}
If $\Phi$ is a solution of (\ref{qde}), then $\Theta\Phi_z$ is a solution of (\ref{qde_z}).
\end{prop}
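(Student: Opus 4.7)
The plan is to verify the intertwining relation
\[
\Theta \, \mathsf{M}_D\big|_{(t_1,t_2)\mapsto (t_1/z,t_2/z)}\, \Theta^{-1} \;=\; \frac{1}{z}\,\mathsf{M}_D
\]
and then push this through the differential equation. Once this identity is in hand, the Proposition follows in one line: starting from $q\frac{d}{dq}\Phi = \mathsf{M}_D\Phi$, replacing $(t_1,t_2)$ by $(t_1/z,t_2/z)$ gives $q\frac{d}{dq}\Phi_z = \mathsf{M}_D|_{(t_1,t_2)\mapsto(t_1/z,t_2/z)}\Phi_z$, and then applying $\Theta$ on the left and using the intertwining yields $q\frac{d}{dq}(\Theta\Phi_z) = \frac{1}{z}\mathsf{M}_D(\Theta\Phi_z)$, which is exactly \eqref{qde_z}.

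The core computation is the intertwining relation, which reduces to the commutation rules between $\Theta$ and the Nakajima operators. Since $\alpha_{-k}$ raises $\ell(\mu)$ by $1$ and $\alpha_k$ lowers it by $1$ for $k>0$, we have
\[
\Theta\,\alpha_{-k}\,\Theta^{-1}\;=\;z\,\alpha_{-k}, \qquad \Theta\,\alpha_{k}\,\Theta^{-1}\;=\;z^{-1}\,\alpha_{k}, \qquad k>0.
\]
Now examine the three kinds of summands in the formula \eqref{operator_M} for $\mathsf{M}_D$. After $(t_1,t_2)\mapsto(t_1/z,t_2/z)$, the diagonal terms carry a factor $(t_1+t_2)/z$, and the conjugation by $\Theta$ leaves $\alpha_{-k}\alpha_k$ (and hence also the energy operator $|\cdot|$) unchanged, producing an overall factor $1/z$. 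In the cubic splitting term, the coefficient becomes $t_1t_2/z^2$ and $\Theta(\alpha_{k+l}\alpha_{-k}\alpha_{-l})\Theta^{-1} = z\cdot\alpha_{k+l}\alpha_{-k}\alpha_{-l}$, so the overall factor is again $1/z$. In the cubic joining term, the coefficient is unaffected by the substitution, while $\Theta(\alpha_{-k-l}\alpha_k\alpha_l)\Theta^{-1} = z^{-1}\cdot\alpha_{-k-l}\alpha_k\alpha_l$, again giving $1/z$.

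Thus all three summand types acquire the same homogeneous factor $1/z$ under the combined operation, which establishes the intertwining identity. This homogeneity is essentially the statement that $\mathsf{M}_D$ is a weight-$1$ operator with respect to the grading that assigns weight $1$ to each $\alpha_{-k}$, weight $-1$ to each $\alpha_k$, and weight $1$ to $t_1$ and $t_2$; the substitution $(t_1,t_2)\mapsto(t_1/z,t_2/z)$ together with conjugation by $\Theta$ is precisely the torus action by $z^{-1}$ under this grading. There is no real obstacle: the only point requiring care is bookkeeping the three $z$-powers across the three summands of $\mathsf{M}_D$, and verifying that the overall scalings cancel uniformly.
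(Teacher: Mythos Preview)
Your proof is correct and follows essentially the same approach as the paper. The paper states the commutation relations $\Theta\alpha_k=\frac{1}{z}\alpha_k\Theta$ and $\Theta\alpha_{-k}=z\alpha_{-k}\Theta$ for $k>0$ as a separate lemma and then says the Proposition follows immediately; you have simply written out explicitly the term-by-term verification of the intertwining identity that the paper leaves implicit.
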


\noindent Proposition
\ref{ll44} follow immediately from the following direct computation.

\begin{lem}
For $k>0$, we have $\Theta\alpha_k=\frac{1}{z}\alpha_k\Theta$ and $\Theta\alpha_{-k}=z\alpha_{-k}\Theta$.
\end{lem}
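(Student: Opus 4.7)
The plan is to verify both identities by direct computation on the Nakajima basis $\{|\mu\rangle\}$ of Fock space, using the defining property that $\alpha_{-k}$ raises the length of a partition by one and $\alpha_k$ lowers it by one. Since $\Theta$ acts diagonally in this basis with eigenvalue $z^{\ell(\mu)}$, the factors of $z$ will be forced on us by bookkeeping the lengths.

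For the creation operator, I would observe that the definition
$$|\mu\rangle = \frac{1}{\mathfrak{z}(\mu)}\prod_i \alpha_{-\mu_i}\, v_\emptyset$$
shows that $\alpha_{-k}|\mu\rangle$ is a scalar multiple of $|\mu \cup \{k\}\rangle$, where $\mu \cup \{k\}$ denotes the partition obtained by appending a part of size $k$. This partition has length $\ell(\mu)+1$, so applying $\Theta$ introduces exactly one extra factor of $z$ compared with $\alpha_{-k}\Theta|\mu\rangle = z^{\ell(\mu)}\alpha_{-k}|\mu\rangle$. This yields $\Theta\alpha_{-k} = z\alpha_{-k}\Theta$.

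For the annihilation operator, the argument is only slightly more involved. Using the commutation relation $[\alpha_k,\alpha_{-l}] = k\,\delta_{k+l,0}$ and $\alpha_k v_\emptyset = 0$, I would commute $\alpha_k$ to the right through the product $\prod_i \alpha_{-\mu_i}$: the only surviving terms arise from contracting $\alpha_k$ against some factor $\alpha_{-\mu_i}$ with $\mu_i = k$. Each such term removes one creation operator, producing a basis vector $|\nu\rangle$ with $\ell(\nu) = \ell(\mu)-1$. Writing $\alpha_k|\mu\rangle = \sum_\nu c_\nu|\nu\rangle$ with $\ell(\nu) = \ell(\mu)-1$, we get
$$\Theta\alpha_k|\mu\rangle = z^{\ell(\mu)-1}\alpha_k|\mu\rangle = z^{-1}\alpha_k\Theta|\mu\rangle,$$
which is the second identity.

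There is no real obstacle here; the only point requiring attention is confirming that $\alpha_k$ strictly decreases the length by one for $k>0$, which is immediate from the Heisenberg commutation relations. Linearity in $|\mu\rangle$ then extends both identities from basis elements to all of $\mathcal{F}\otimes_\mathbb{C}\mathbb{C}(t_1,t_2)$.
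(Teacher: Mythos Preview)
Your argument is correct and is exactly the direct computation the paper has in mind: since $\Theta$ is diagonal on the Nakajima basis with eigenvalue $z^{\ell(\mu)}$ and the operators $\alpha_{\mp k}$ shift $\ell(\mu)$ by $\pm 1$, the commutation relations with $\Theta$ follow immediately. One small typo: the Heisenberg relation should read $[\alpha_k,\alpha_{-l}]=k\,\delta_{k,l}$ (equivalently $k\,\delta_{k-l,0}$), not $k\,\delta_{k+l,0}$; your subsequent sentence shows you are using the correct relation.
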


\section{Solutions of the QDE}

\subsection{Preparations}
In what follows, we fix an integer $n\geq 1$ and
consider the solutions of the QDE for $\Hilb$.
The equivariant parameters $t_1, t_2$ are treated as complex numbers varying in a Euclidean open domain
of $\com$. We work with the energy $n$ subspace
of Fock space. For notational simplicity, we often omit $n$ from the formulas.

\subsection{At the origin}
At $q=0$, the operator $\mathsf{M}_D(0)$ has distinct eigenvalues $$-c(\lambda; t_1, t_2)=-\sum_{(i,j)\in \lambda}[(j-1)t_1+(i-1)t_2].$$ So $\mathsf{M}_D(q)$ has distinct eigenvalues for small for $q\in \com$
with $|q|$ small. Furthermore, the values of $q$ for which $\mathsf{M}_D(q)$ has repeated eigenvalues are roots of polynomial equations, so there are only finitely many such $q$. Therefore, we can find a path 
$$\gamma: [0,1]\to \mathbb{C}\,, \  \ \ \ \ \gamma(0)=0\, , \ \gamma(1)=-1\, ,$$ and an open neighborhood $\mathcal{U}\subset \mathbb{C}$ containing $\gamma$ such that
\begin{enumerate}
\item[(i)]
$\mathsf{M}_D(q)$ has distinct eigenvalues for $q\in \mathcal{U}$,
\item[(ii)]
As $q$ moves along $\gamma$ to $q=-1$, the function $q^{-c(\lambda; t_1,t_2)}$ is transported to $\exp(\pi\sqrt{-1}c(\lambda;t_1,t_2))$.
\end{enumerate}
For $q\in \mathcal{U}$, the eigenvalues of $\mathsf{M}_D(q)$, denoted by $v(\lambda; q)$, are analytic in $q$. Near $q=0$, we have a power series expansion $$v(\lambda; q)=-c(\lambda;t_1, t_2)+O(q)\,.$$
Therefore, the canonical coordinates $$u(\lambda;q)=\int v(\lambda; q) d\log q\, ,$$ are analytic in $q$. We choose the integration constants so that in $q\to 0$ we have the asymptotics
\begin{equation}\label{lem:asym_u}
u(\lambda;q)=-c(\lambda;t_1, t_2)\log q +O(q).
\end{equation}
%\begin{lem}
%In the $q\to 0$ limit, we have $q^{-c(\lambda; t_1, t_2)/z}\sim \exp(u(\lambda; q)/z)$.
%\end{lem}
\noindent Let $\mathsf{u}$ be the diagonal matrix with entries $u(\lambda; q)$.

\subsection{Formal solutions}
By \cite[Proposition 1.1]{g1},  we have
the following results (see also \cite{d} and \cite[Theorem 1]{lp}).
\begin{prop}\label{prop:asym_sol}
There is an open neighborhood $\mathcal{U}_0\subset \mathcal{U}$ of $0$ on which the QDE (\ref{qde_z}) has a formal solution of the form 
\begin{equation}\label{eqn:asym_sol}
\mathsf{R}^{\mathsf{Hilb}} e^{\mathsf{u}/z}\, .
\end{equation}
\end{prop}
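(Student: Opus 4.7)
The plan is to reduce this to the standard Dubrovin--Givental construction of asymptotic fundamental solutions at a semisimple point of a Frobenius manifold, then verify the analytic / formal framework matches our setting. The input from the previous discussion is: on the open set $\mathcal{U}$, the operator $\mathsf{M}_D(q)$ has simple spectrum, so the corresponding Frobenius multiplication is semisimple, the normalized idempotents $\widetilde{\epsilon}_\mu(q)$ and the change-of-basis matrix $\Psi(q)$ with $\Psi^\nu_\mu = \eta(\widetilde{\epsilon}_\nu, \partial_\mu)$ are holomorphic in some open subset $\mathcal{U}_0 \subset \mathcal{U}$ containing $0$, and the canonical coordinates $u(\lambda;q)$ have been constructed above with the asymptotics \eqref{lem:asym_u}.

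First I would rewrite the QDE \eqref{qde_z} in the basis of normalized idempotents. Setting $\Phi = \Psi^{-1} \widetilde{\Phi}$, the connection $z q\partial_q - \mathsf{M}_D$ becomes $z q \partial_q - V(q) - z A(q)$ for some matrix $A(q) = \Psi (q\partial_q \Psi^{-1})$ that is holomorphic on $\mathcal{U}_0$, where $V(q) = \Psi \mathsf{M}_D \Psi^{-1} = q\partial_q \mathsf{u}$ is the diagonal matrix of eigenvalues. I then seek $\widetilde{\Phi} = \mathsf{R}(z) e^{\mathsf{u}/z}$ with $\mathsf{R}(z) = \mathrm{Id} + \sum_{k\ge 1} \mathsf{R}_k z^k$. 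Substituting into the QDE and using $q\partial_q e^{\mathsf{u}/z} = (V/z) e^{\mathsf{u}/z}$, the exponential factor cancels and one obtains the term-by-term recursion
\begin{equation*}
[\mathsf{R}_k, V] \;=\; q\partial_q \mathsf{R}_{k-1} \;-\; A \, \mathsf{R}_{k-1}, \qquad k \ge 1,
\end{equation*}
with the convention $\mathsf{R}_0 = \mathrm{Id}$.

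Next I would solve the recursion inductively. Since $V$ is diagonal with distinct entries on $\mathcal{U}_0$, the commutator $[\,\cdot\,, V]$ is invertible on the off-diagonal part and annihilates the diagonal part. The recursion therefore determines the off-diagonal entries of $\mathsf{R}_k$ uniquely (and holomorphically in $q \in \mathcal{U}_0$) from $\mathsf{R}_{k-1}$, while a compatibility check using the diagonal part of $q\partial_q \mathsf{R}_{k-1} - A \mathsf{R}_{k-1}$ must vanish --- this is the standard Frobenius-manifold identity that the diagonal entries of $A$ equal $q\partial_q$ applied to something compatible with the structure (equivalent to flatness of the Dubrovin connection, which we know holds since $\mathsf{M}_D$ comes from a genuine quantum product). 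The diagonal part of $\mathsf{R}_k$ is then fixed (up to an additive constant) by imposing $q\partial_q (\mathsf{R}_k)_{\mathrm{diag}} = (A \mathsf{R}_{k-1} - \mathsf{R}_{k-1} A - \text{off-diag terms})_{\mathrm{diag}}$; the free constants at each step are pinned down by the symplectic condition $\mathsf{R}^\dagger(-z) \mathsf{R}(z) = \mathrm{Id}$ modulo an overall diagonal factor of the form $\exp(\sum \mathsf{a}_{2j-1} z^{2j-1})$ with constant coefficients (cf.\ property (ii) of $\mathsf{R}$-matrices stated in Section 2). This is precisely the ambiguity acknowledged in \cite[Prop.~1.1]{g1}.

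The only thing to verify beyond the pure formal-series construction is that each $\mathsf{R}_k$ is holomorphic on a common neighborhood $\mathcal{U}_0$. This follows because (a) $\Psi$, $\Psi^{-1}$, and $V$ are holomorphic on $\mathcal{U}$ (eigenvalues are simple there and $\mathsf{M}_D(q)$ is polynomial in $q$, so standard perturbation theory gives holomorphic eigenvectors on a neighborhood of any point of $\mathcal{U}$); (b) the canonical coordinates $u(\lambda;q)$ are holomorphic in $q$ once one subtracts the logarithmic piece of \eqref{lem:asym_u}; (c) inverting $[\,\cdot\,, V]$ on off-diagonal entries amounts to dividing by $v(\lambda;q) - v(\mu;q)$, which is nowhere vanishing on $\mathcal{U}$. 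The main potential obstacle is the vanishing of the diagonal compatibility condition in the recursion, but this is a formal consequence of the fact that $\mathsf{M}_D$ is quantum multiplication in a Frobenius structure, and hence of the semisimplicity arguments already invoked in Section 2 --- no separate analytic argument is required. Combining these ingredients yields the desired formal solution \eqref{eqn:asym_sol} on $\mathcal{U}_0$, and uniqueness up to right multiplication by $\exp(\sum \mathsf{a}_{2j-1} z^{2j-1})$ is exactly property (ii) of $\mathsf{R}$-matrices used throughout the paper.
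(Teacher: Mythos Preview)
Your proposal is correct and follows the same approach as the paper: the paper does not give an independent proof but simply invokes \cite[Proposition~1.1]{g1} (with cross-references to \cite{d} and \cite{lp}), and what you have written is precisely a reconstruction of Givental's argument --- passing to the normalized-idempotent frame, deriving the commutator recursion for the $\mathsf{R}_k$, and checking that the coefficients are holomorphic on $\mathcal{U}_0$ because the eigenvalues of $\mathsf{M}_D(q)$ are distinct there. One small point worth tightening: your construction produces the full family of formal solutions (parametrized by the diagonal constants of integration), whereas the Proposition names the specific matrix $\mathsf{R}^{\mathsf{Hilb}}$; you should note explicitly that $\mathsf{R}^{\mathsf{Hilb}}$, as defined in Section~2 via the Givental--Teleman classification, is by construction one member of this family (its coefficients are rational in $q$ by Section~2.5, hence holomorphic on $\mathcal{U}_0$), so the formal solution $\mathsf{R}^{\mathsf{Hilb}} e^{\mathsf{u}/z}$ is indeed realized.
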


In \cite[Proposition 1.1]{g1}, the asymptotics takes the form 
\begin{equation}\label{n55}
\Psi \mathsf{R} e^{\mathsf{u}/z}\end{equation}
where $\Psi(q)$ is the matrix{\footnote{In \cite[Theorem 1]{lp}, 
a different convention is followed: the same
transition matrix is denoted
$\Psi^{-1}$.}}
whose columns are length-$1$ eigenvectors of $\mathsf{M}_D(q)$.
In other word, $\Psi$ is the transition matrix from the canonical basis to the flat basis. 
In \eqref{n55}, the matrix $\mathsf{R}$
is in canonical coordinates.
Together $\Psi \mathsf{R}$ is the $\mathsf{R}$-matrix in
flat coordinates. 

After the restriction
$q=0$, canonical and flat 
coordinates are 
the given by the respective bases 
$$\Big\{\,\mathsf{J}^\lambda\, \Big|\, \lambda\in \text{Part}(n)\, \Big\}\ \ \ 
\text{and}\ \ \
\Big\{\,|\mu\rangle\, \Big|\, \mu\in \text{Part}(n)\, \Big\}$$ 
of Fock space.

%In our 
%convention,
% we absorb this transition matrix into $R$.

%\subsubsection{Asymptotics}

The
 asymptotics in the $z\to 0$ limit
 will play a
 crucial role.
 For 
 our study,
 we must specify how $z$ approaches $0\in \com$. Let $$\mathfrak{R}\subset\mathbb{C}$$ be a ray emanating from $0$ satisfying
 the following four conditions:  
\begin{enumerate}
\item[$\bullet$]
For $z\in \mathfrak{R}$ or $z\in -\mathfrak{R}$,
\begin{equation}\label{t_condition_z}
\Big|\arg\left(\frac{t_1}{z}\right)\Big|< \pi,\quad  \Big|\arg\left(\frac{t_2}{z}\right)\Big|< \pi.
\end{equation}
\item[$\bullet$]
For $z\in \mathfrak{R}$ or $z\in -\mathfrak{R}$, and for any partition $\lambda$ of $n$, and $s\in \mathsf{D}_\lambda$, we have
\begin{equation}\label{eqn:w_condition_z}
\Big|\arg\left(\frac{(l(s)+1)t_1-a(s)t_2}{z}\right)\Big|< \pi,\quad  \Big|\arg\left(\frac{-l(s)t_1+(a(s)+1)t_2}{z}\right)\Big|< \pi.
\end{equation}
\item[$\bullet$]
For $z\in \mathfrak{R}$ or $z\in -\mathfrak{R}$, and for any two partitions $\lambda, \lambda'$ of $n$, we have
\begin{equation}\label{l_condition_z}
\arg\left(\frac{-c(\lambda; t_1, t_2)+c(\lambda';t_1, t_2)}{z}\right)\notin \frac{\pi}{2}+\mathbb{Z}\pi.
\end{equation}
\item[$\bullet$]
For $z\in \mathfrak{R}$,
\begin{equation}\label{real_condition_z}
\text{Re}(\sqrt{-1}t_1/z)<0, \quad \text{Re}(\sqrt{-1}t_2/z)>0.
\end{equation}
\end{enumerate}
For suitable $t_1, t_2$ varying in small
enough domains,
we can find $\mathfrak{R}\subset \com$ satisfying these conditions.

\subsection{Solutions}
We recall the solution of QDE (\ref{qde}) constructed in \cite{op29}. 

As in Section \ref{subsec:hilb_deg0},
let $J_\lambda\in \mathcal{F}\otimes_\com \com(t_1,t_2)$ be the integral form of the Jack symmetric function depending on the parameter $\alpha=1/\theta$ of \cite{mac,op29}. Then $$\mathsf{J}^\lambda=t_2^{|\lambda|}t_1^{\ell(\cdot )}J_\lambda|_{\alpha=-t_1/t_2}$$ is an eigenfunction of $\mathsf{M}_D(0)$ with eigenvalue $-c(\lambda;t_1,t_2)$. The coefficient of $$|\mu\rangle\in \mathcal{F}\otimes_\com \com(t_1,t_2)$$ in the expansion of $\mathsf{J}^\lambda$ is $(t_1t_2)^{\ell(\mu)}$ times a polynomial in $t_1$ and $t_2$ of degree $|\lambda|-\ell(\mu)$.

By  direct calculation, detailed in Section \ref{subsec:scalar_prod}, we find 
\begin{equation}\label{eqn:jack_pairings}
\<\mathsf{J}^\lambda, \mathsf{J}^\mu\>_H=\eta(\mathsf{J}^\lambda, \mathsf{J}^\mu)\, .
\end{equation}
Since $\mathsf{J}^\lambda$ corresponds to the $\T$-equivariant class
of the $\T$-fixed point of $\Hilb$ associated to $\lambda$,
\begin{equation}\label{eqn:jack_lengths}
||\mathsf{J}^\lambda||^2=||\mathsf{J}^\lambda||_H^2=\prod_{\mathsf{w}: \text{ tangent weights at $\lambda$}} \mathsf{w}\,
\end{equation}
see \cite{op29}. 
The tangent weights are given by \eqref{eqn:tangent_weights}.

There are solutions\footnote{See, for example, \cite[Chapter XIX]{ince} for a discussion of how these solutions are constructed.} to (\ref{qde}) of the form $$\mathsf{Y}^\lambda(q)q^{-c(\lambda;t_1,t_2)}, \quad \mathsf{Y}^\lambda(q)\in \mathcal{F}\otimes_\mathbb{C}\mathbb{C}(t_1, t_2)[[q]],$$ which converge for $|q|<1$ and satisfy $\mathsf{Y}^\lambda(0)=\mathsf{J}^\lambda$. 

By \cite[Corollary 1]{op29}, 
\begin{equation}\label{eqn:length_Y}
\langle \mathsf{Y}^\lambda(q), \mathsf{Y}^\mu(q)\rangle_H=\delta_{\lambda\mu} ||\mathsf{J}^\lambda||_H^2=\langle \mathsf{J}^\lambda, \mathsf{J}^\mu\rangle_H.
\end{equation}

As in \cite[Section 3.1.3]{op29}, let $\mathsf{Y}$ be the matrix whose column vectors are $\mathsf{Y}^\lambda$. Let $\mathsf{J}$ be the matrix whose column vectors are $\mathsf{J}^\lambda$. Let $\mathsf{G}_{\text{DT}}(t_1, t_2)$ be the diagonal matrix with eigenvalues $$q^{-c(\lambda;t_1,t_2)}\prod_{\mathsf{w}: \text{ tangent weights at $\lambda$}}\frac{1}{\Gamma(\mathsf{w}+1)}.$$
Define the following further diagonal matrices:
\begin{center}
\begin{tabular}[c]{|l|r|}
\hline
{Matrix} & {Eigenvalues} \\
\hline
$L$ & $z^{-|\lambda|}\prod_{\mathsf{w}: \text{ tangent weights at $\lambda$}} \mathsf{w}^{1/2}$\\
\hline
$L_0$ & $q^{-c(\lambda;t_1,t_2)/z}$\\
\hline
$A$ & $\prod_{\mathsf{w}: \text{ tangent weights at $\lambda$}}(\mathsf{w}/z)^{\mathsf{w}/z}e^{-\mathsf{w}/z}$\\
\hline
\end{tabular}
\end{center}

Consider the following solution to (\ref{qde_z}),
\begin{equation}\label{eqn:sol}
S=(2\pi)^{|\cdot|}\Theta\mathsf{Y}_z\mathsf{G}_{\text{DT}z}A\, ,
\end{equation}
where $S$ is defined over $\mathcal{U}$. As before,
$$\mathsf{Y}_z=\mathsf{Y}\left(\frac{t_1}{z}, \frac{t_2}{z}, q\right)\, , \ \ \ \
\mathsf{G}_{\text{DT}z}=\mathsf{G}_{\text{DT}}
\left(\frac{t_1}{z}, \frac{t_2}{z}, q\right)\, .$$

\begin{prop}\label{prop:asymp_S}
As $z\to 0$ along $\mathfrak{R}$, the operator $Se^{-\mathsf{u}/z}|_{q=0}$ has the asymptotics $$\mathsf{R}^{\text{\em Hilb}}\Big|_{q=0}.$$
\end{prop}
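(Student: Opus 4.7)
The plan is to perform a direct asymptotic computation combining (a) the explicit form of $S|_{q=0}$ obtained by unpacking the definition \eqref{eqn:sol} at $q=0$, (b) Stirling's asymptotic expansion of the Gamma function, and (c) the explicit formula for $\mathsf{R}^{\mathsf{Hilb}}|_{q=0}$ from Proposition \ref{prophilb}.

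First, I would evaluate $Se^{-\mathsf{u}/z}$ at $q=0$. Near $q=0$ one has $\mathsf{Y}^\lambda(q) = \mathsf{J}^\lambda + O(q)$ and, by \eqref{lem:asym_u}, $u(\lambda;q) = -c(\lambda;t_1,t_2)\log q + O(q)$. Since $c$ is linear in the equivariant parameters, the divergent factor $q^{-c(\lambda;t_1/z,t_2/z)} = q^{-c(\lambda;t_1,t_2)/z}$ in $\mathsf{G}_{\mathsf{DT}z}$ cancels against the leading part $q^{c(\lambda;t_1,t_2)/z}$ of $e^{-u(\lambda;q)/z}$, while the $O(q)$ remainders vanish. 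Combined with $\mathsf{Y}^\lambda(0) = \mathsf{J}^\lambda$, this yields
\begin{equation*}
Se^{-\mathsf{u}/z}\Big|_{q=0} \;=\; (2\pi)^{n}\,\Theta\, \mathsf{J}_z\; \cdot\; \mathrm{diag}_\lambda\!\left(\prod_{\mathsf{w}} \frac{(\mathsf{w}/z)^{\mathsf{w}/z}\,e^{-\mathsf{w}/z}}{\Gamma(\mathsf{w}/z+1)}\right),
\end{equation*}
where $\mathsf{w}$ runs over the $2n$ tangent weights at the $\mathsf{T}$-fixed point indexed by $\lambda$ given in \eqref{eqn:tangent_weights}.

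Next I would apply Stirling's asymptotic expansion
\begin{equation*}
\frac{x^x e^{-x}}{\Gamma(x+1)} \;\sim\; \frac{1}{\sqrt{2\pi x}}\exp\!\left(-\sum_{m\geq 1}\frac{B_{2m}}{2m(2m-1)}\,x^{-(2m-1)}\right),\quad |x|\to\infty,\;|\arg x|<\pi\,.
\end{equation*}
Conditions \eqref{t_condition_z}--\eqref{eqn:w_condition_z} on $\mathfrak{R}$ guarantee $|\arg(\mathsf{w}/z)|<\pi$ for every tangent weight $\mathsf{w}$ as $z\to 0$ along $\mathfrak{R}$, so Stirling applies term-by-term in the product. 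Multiplying over the $2n$ weights and using the sign identity $(at_2-(l+1)t_1)^{-(2m-1)} = -((l+1)t_1 - at_2)^{-(2m-1)}$ for odd exponent, one finds $\sum_{\mathsf{w}}\mathsf{w}^{-(2m-1)} = -N_{2m-1,\lambda}(t_1,t_2)$, so the Bernoulli part of Stirling produces exactly the $\lambda$-th diagonal entry of $\mathsf{R}^{\mathsf{Hilb}}|_{q=0}$ from Proposition \ref{prophilb}.

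What remains is matching the normalization prefactors. The square-root contributions from Stirling, with branches fixed by \eqref{real_condition_z} and the evaluation \eqref{eqn:jack_lengths} of $\|\mathsf{J}^\lambda\|^2$, combine to $\prod_{\mathsf{w}}(2\pi\mathsf{w}/z)^{-1/2} = z^{n}/((2\pi)^{n}\prod_\mathsf{w}\mathsf{w}^{1/2})$. Meanwhile, the stated homogeneity of the Jack integral form (the coefficient of $|\mu\rangle$ in $\mathsf{J}^\lambda$ has total degree $|\lambda|+\ell(\mu)$ in $(t_1,t_2)$) gives $\Theta\mathsf{J}^\lambda_z = z^{-|\lambda|}\mathsf{J}^\lambda = z^{-n}\mathsf{J}^\lambda$, so
\begin{equation*}
(2\pi)^n\,\Theta\mathsf{J}_z \cdot \mathrm{diag}\!\left(\prod_\mathsf{w}\frac{1}{\sqrt{2\pi\mathsf{w}/z}}\right) \;=\; \mathsf{J}\cdot\mathrm{diag}\!\left(\frac{1}{\prod_\mathsf{w}\mathsf{w}^{1/2}}\right),
\end{equation*}
whose columns are the normalized idempotents $\mathsf{J}^\lambda/\|\mathsf{J}^\lambda\|$ of the classical product at $q=0$. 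Absorbing this as the Givental prefactor $\Psi|_{q=0}$ in the form $\Psi\cdot\mathsf{R}\cdot e^{\mathsf{u}/z}$ (cf.\ the discussion around \eqref{n55}) identifies the asymptotic expansion of $Se^{-\mathsf{u}/z}|_{q=0}$ with $\mathsf{R}^{\mathsf{Hilb}}|_{q=0}$. The hardest step will be this last one: carefully tracking powers of $z$ and square-root branches so that the prefactor $(2\pi)^n\Theta\mathsf{J}_z$ together with the Stirling square-roots matches the Givental transition matrix $\Psi|_{q=0}$. The ray conditions on $\mathfrak{R}$ are imposed precisely to make this branch identification unambiguous.
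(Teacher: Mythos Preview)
Your proposal is correct and follows essentially the same approach as the paper: both evaluate $S$ at $q=0$ using $\mathsf{Y}|_{q=0}=\mathsf{J}$, cancel the $q^{-c/z}$ factors against $e^{-\mathsf{u}/z}$ via \eqref{lem:asym_u}, apply Stirling to the Gamma factors in $\mathsf{G}_{\mathrm{DT}z}$, and identify the residual $\mathsf{J}^\lambda/\|\mathsf{J}^\lambda\|$ columns with $\Psi|_{q=0}$. The paper packages the bookkeeping via auxiliary diagonal matrices $L,L_0$ in the factorization $S=(\Theta\mathsf{Y}_zL^{-1})\bigl((2\pi)^{|\cdot|}LL_0^{-1}\mathsf{G}_{\mathrm{DT}z}A\bigr)L_0$, whereas you compute the same quantities directly, but the content is identical (one small note: the branch control you need for the square roots comes from \eqref{eqn:w_condition_z}, not \eqref{real_condition_z}).
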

\begin{proof}
We write $S$ as
\begin{equation*}
\begin{split}
(2\pi)^{|\cdot|}\Theta\mathsf{Y}_z\mathsf{G}_{\text{DT}z}A&= (2\pi)^{|\cdot|}\Theta\mathsf{Y}_zL^{-1}LL_0L_0^{-1}\mathsf{G}_{\text{DT}z}A.\\
&=(\Theta\mathsf{Y}_zL^{-1})((2\pi)^{|\cdot|}LL_0^{-1}\mathsf{G}_{\text{DT}z}A)L_0.
\end{split}
\end{equation*}
At $q=0$, the columns of $\Theta\mathsf{Y}_zL^{-1}$ are 
\begin{eqnarray*}
\Theta \mathsf{Y}^\lambda\left(0;\frac{t_1}{z}, \frac{t_2}{z}\right)z^{|\lambda|}\prod_{\mathsf{w}: \text{ tangent weights at
$\lambda$}} \mathsf{w}^{-1/2}&
=&\Theta \mathsf{J}^\lambda\left(\frac{t_1}{z}, \frac{t_2}{z}\right)z^{|\lambda|}\prod_{\mathsf{w}: \text{ tangent weights
at $\lambda$}} \mathsf{w}^{-1/2}\\
&=&\mathsf{J}^\lambda(t_1, t_2)\prod_{\mathsf{w}: \text{ tangent weights at $\lambda$}} \mathsf{w}^{-1/2}.
\end{eqnarray*}
So $\Psi|_{q=0}=\Theta\mathsf{Y}_zL^{-1}$.
By \eqref{lem:asym_u}, $L_0e^{-\mathsf{u}/z}\Big|_{q=0}=\Id$,
where $\Id$ is the identity matrix.

It remains to calculate the asymptotics of $(2\pi)^{|\cdot|}LL_0^{-1}\mathsf{G}_{\text{DT}z}A$. Recall the Stirling asymptotics for Gamma function (see, for example, \cite{ww}):
\begin{equation}\label{eqn:stirling}
\frac{1}{\Gamma(x+1)}\sim \frac{x^{-1/2}x^{-x}e^x}{\sqrt{2\pi}}\exp\left(\sum_{m>0}\frac{B_{2m}}{2m(2m-1)}\left(\frac{-1}{x}\right)^{2m-1}\right), \quad  |x|\to \infty, |\text{arg}(x)|<\pi.
\end{equation}
By condition \eqref{eqn:w_condition_z}, 
the formula \eqref{eqn:stirling} is applicable to $\mathsf{G}_{\text{DT}}$ as $z\to 0$ along $\mathfrak{R}$. We 
conclude that the asymptotics of $L_0^{-1}\mathsf{G}_{\text{DT}z}$ is a diagonal matrix with eigenvalues 
\begin{multline*}
\prod_{\mathsf{w}: \text{ tangent weights at $\lambda$}} \frac{(\mathsf{w}/z)^{-1/2}(\mathsf{w}/z)^{-(\mathsf{w}/z)}e^{\mathsf{w}/z}}{\sqrt{2\pi}}\exp\left(\sum_{m>0}\frac{B_{2m}}{2m(2m-1)}\left(\frac{-1}{(\mathsf{w}/z)}\right)^{2m-1}\right)\\
=\frac{z^{|\lambda|}}{(2\pi)^{|\lambda|}}\prod_{\mathsf{w}: \text{ tangent weights at $\lambda$}} \mathsf{w}^{-1/2}(\mathsf{w}/z)^{-\mathsf{w}/z}e^{\mathsf{w}/z}\exp\left(\sum_{m>0}\frac{B_{2m}}{2m(2m-1)}\left(\frac{-z}{\mathsf{w}}\right)^{2m-1}\right).
\end{multline*} 
Therefore, $(2\pi)^{|\cdot|}LL_0^{-1}\mathsf{G}_{\text{DT}z}A$ has asymptotics given by a diagonal matrix with eigenvalues
\begin{equation*}
\prod_{\mathsf{w}: \text{ tangent weights at $\lambda$}} \exp\left(\sum_{m>0}\frac{B_{2m}}{2m(2m-1)}\left(\frac{-z}{\mathsf{w}}\right)^{2m-1}\right),
\end{equation*}
which coincides, by Proposition \ref{prophilb},  with $\mathsf{R}^{\text{Hilb}}\Big|_{q=0}$ written as a matrix
with both domain and range in the {canonical} basis.

Taken all together, $Se^{-\mathsf{u}/z}|_{q=0}$ is
$$(\Theta\mathsf{Y}_zL^{-1})((2\pi)^{|\cdot|}LL_0^{-1}\mathsf{G}_{\text{DT}z}A)L_0 \exp(-\mathsf{u}/z)|_{q=0}$$
and has the asymptotics $\mathsf{R}^{\mathsf{Hilb}}|_{q=0}$ written as a matrix with domain
in the canonical basis and
range 
in the {flat} basis.
\end{proof}

\subsection{Asymptotics of solutions}

%Applying \cite[Theorem 26.3]{wasow}, we have the following  

\begin{prop}\label{prop:sol_w_asym}
For every $p\in \gamma$, there exists an open neighborhood $$\mathcal{U}_p\subset \mathcal{U}$$ 
on which the system (\ref{qde_z}) has a fundamental solution $\WWW$ satisfying the following
property: in the $z\to 0$ limit along $\mathfrak{R}$, $\WWW$ has  asymptotics of the form $$\WWW \sim  \WW e^{\mathsf{u}/z}$$ where $\WW=\Id+\WW_1z+\WW_2z^2+\ldots$ is an operator-valued $z$-series
with coefficients analytic in $q$.
\end{prop}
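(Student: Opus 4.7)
The plan is to realize $\WWW$ as the Borel resummation, in the direction $\mathfrak{R}$, of the formal fundamental solution $\mathsf{R}^{\mathsf{Hilb}}e^{\mathsf{u}/z}$ furnished by Proposition \ref{prop:asym_sol}. Viewed through the Dubrovin connection, the system \eqref{qde_z} has $z=0$ as an irregular singular point of Poincar\'e rank one whose leading symbol in the basis of normalized idempotents is the diagonal matrix $\mathsf{u}(q)$. First I would shrink $\mathcal{U}$ to a neighborhood $\mathcal{U}_p$ of $p$ on which the eigenvalues $v(\lambda;q)$, hence the canonical coordinates $u(\lambda;q)$, remain analytic and uniformly pairwise distinct; this places us in the standard ``generic'' setting for irregular singularities of linear systems.

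Next I would use the defining conditions of $\mathfrak{R}$, in particular \eqref{l_condition_z}, to check that $\mathfrak{R}$ is not a Stokes direction: the Stokes rays at $z=0$ are precisely those along which some pair $\exp(u(\lambda;q)/z)$ and $\exp(u(\lambda';q)/z)$ has equal modulus, i.e., those with $\arg((u(\lambda;q)-u(\lambda';q))/z)\in \tfrac{\pi}{2}+\mathbb{Z}\pi$. Since $u(\lambda;q)-u(\lambda';q)\to -c(\lambda;t_1,t_2)+c(\lambda';t_1,t_2)$ as $q\to 0$ by \eqref{lem:asym_u}, condition \eqref{l_condition_z} together with continuity keeps $\mathfrak{R}$ strictly inside an admissible sector $\Sigma$ at $z=0$, uniformly for $q$ in a small enough $\mathcal{U}_p$ along $\gamma$.

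Third, I would apply the classical Hukuhara-Turrittin-Sibuya existence theorem for linear systems of Poincar\'e rank one whose leading matrix has pairwise distinct eigenvalues -- the same input underlying the uniqueness statements in \cite{d,g1,lp}: in the admissible sector $\Sigma$ there is a unique holomorphic fundamental solution $\WWW(z,q)$ of \eqref{qde_z} whose asymptotic expansion as $z\to 0$ in $\Sigma$ is the formal solution $\mathsf{R}^{\mathsf{Hilb}}e^{\mathsf{u}/z}$. The parametric version of this theorem, whose hypotheses are the analytic $q$-dependence of the coefficients and the uniform eigenvalue separation just secured, upgrades $\WWW$ to a function jointly holomorphic in $(z,q)$ on $\Sigma\times \mathcal{U}_p$ after possibly shrinking $\mathcal{U}_p$. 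Setting $\WW:=\WWW\, e^{-\mathsf{u}/z}$, uniqueness of asymptotic expansions in $\Sigma$ identifies the coefficients $\WW_m(q)$ with those of $\mathsf{R}^{\mathsf{Hilb}}$, which are built recursively from the entries of $\mathsf{M}_D(q)$ and the reciprocals of the uniformly nonzero eigenvalue differences and hence are analytic on $\mathcal{U}_p$.

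The principal obstacle is the parametric aspect: one must verify that Borel resummation commutes with the analytic parameter $q$. This is standard once \eqref{l_condition_z} secures uniform separation from Stokes rays, and may be established either by a contraction-mapping argument in a Banach space of Borel transforms with $q$-holomorphic coefficients, or directly by Sibuya's parametric extension of the Hukuhara-Turrittin theorem.
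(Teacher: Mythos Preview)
Your overall strategy---constructing $\WWW$ sectorially at $z=0$ via Hukuhara--Turrittin--Sibuya or Borel resummation---is a reasonable alternative in spirit, but the paper takes a different route: for $p\neq 0$ it changes variables $q=pe^{-x}$ to obtain $z\,d\Phi/dx=M\Phi$ and applies Wasow's analytical simplification \cite[Theorems 26.1, 26.3]{wasow}, which reduces the system to decoupled scalar ODEs and yields the asymptotics along \emph{any} direction, so no Stokes-ray condition is needed there. For $p=0$ the paper invokes Russell--Sibuya \cite{rs1} separately, precisely because $q=0$ is a \emph{regular singular point} of \eqref{qde_z} in $q$, and standard singular perturbation results do not cover the confluence of a small parameter with a Fuchsian singularity; condition \eqref{l_condition_z} enters only in this case.

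Your argument has two concrete gaps. First, \eqref{qde_z} is an ODE in $q$ with $z$ a parameter; to speak of $z=0$ as an irregular singular point of Poincar\'e rank one you would need a $z$-direction equation, but the Frobenius structure here is explicitly non-conformal, so the usual Euler-field $z$-connection is not available---you must instead set up the exact-WKB/Borel framework for singularly perturbed systems directly, which is not quite the Hukuhara--Turrittin--Sibuya theorem you cite. Second, and more seriously, you treat $p=0$ uniformly with $p\neq 0$; the regular singularity at $q=0$ means the parametric sectorial existence you invoke does not apply there without the additional input of \cite{rs1}. Incidentally, your limit $u(\lambda;q)-u(\lambda';q)\to -c(\lambda)+c(\lambda')$ is wrong: by \eqref{lem:asym_u} the difference behaves like $(-c(\lambda)+c(\lambda'))\log q$, which diverges; and for $p\neq 0$ condition \eqref{l_condition_z} says nothing about $u(\lambda;p)-u(\lambda';p)$, so your Stokes-ray argument does not control the sector at a general point of $\gamma$.
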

\begin{proof}
For $p\neq 0$, the result follows immediately from \cite[Theorem 26.3]{wasow}. More precisely, we use the change of variables $q=pe^{-x}$ to transform the system (\ref{qde_z}) into a system of the form $$z\frac{d}{dx}\Phi= M\Phi.$$ We use an analytical simplification, which exists according to \cite[Theorem 26.1]{wasow}, to transform the latter
system to a collection of $1$-dimensional ODEs of the form $$z\frac{df}{dx}=a(x, t_1, t_2)f\, ,$$
which can be easily solved. Combining the solutions of these ODEs with the analytical simplification gives the solution $\mathcal{S}$. The $z\to 0$ limit can be taken along $\mathfrak{R}$ because the proof of \cite[Theorem 26.1]{wasow} allows the $z\to 0$ limit to be taken along any direction.

For $p=0$ the above argument is still valid. The needed analytical simplification exists because of the condition (\ref{l_condition_z}) allows us to apply \cite[Theorem 3]{rs1}. We then transform (\ref{qde_z}) to a collection of $1$-dimensional ODEs of the form $$zq\frac{df}{dq}=a(q, t_1, t_2)f\, .$$ Combining the solutions of these ODEs with the analytical simplification gives the solution $\mathcal{S}$. By condition (\ref{l_condition_z}), the $z\to 0$ limit can be taken along $\mathfrak{R}$.
\end{proof}

\begin{prop}\label{prop:sol_w_asym2}
There exists an open neighborhood $\mathcal{U}'\subset \mathcal{U}$ of $\gamma$ on which the solution $S$ has the  asymptotics 
\begin{equation}\label{eqn:S_asymp_exp}
{\text{$S\sim \mathsf{R} e^{\mathsf{u}/z}$ as  $z\to 0$ along $\mathfrak{R}$,}}
\end{equation}
where $\mathsf{R}=\Id+\mathsf{R}_1z+\mathsf{R}_2z^2+...$ is an operator-valued $z$-series with coefficients analytic in $q$ and $\mathsf{R}=\mathsf{R}^{\mathsf{Hilb}}$ in a neighborhood of $q=0$. Moreover, $\mathsf{R}$ is symplectic,
$$\mathsf{R}^\dagger(-z)\mathsf{R}(z)=\Id\, $$
with the adjoint taken with respect to the pairing $\eta$.
\end{prop}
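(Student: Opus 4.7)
My plan is to construct $\mathsf{R}$ by patching together the local asymptotic expansions provided by Proposition \ref{prop:sol_w_asym} along $\gamma$, identify the resulting $\mathsf{R}$ with $\mathsf{R}^{\mathsf{Hilb}}$ in a neighborhood of $q=0$ using Proposition \ref{prop:asymp_S}, and deduce the symplectic condition from the self-adjointness of $\mathsf{M}_D$ with respect to $\eta$.

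First, at each $p \in \gamma$, Proposition \ref{prop:sol_w_asym} supplies a neighborhood $\mathcal{U}_p \subset \mathcal{U}$ and a fundamental solution $\WWW_p$ of \eqref{qde_z} with $\WWW_p \sim \WW_p\, e^{\mathsf{u}/z}$ along $\mathfrak{R}$. Since $S$ is another fundamental solution on $\mathcal{U}_p$, we have $S = \WWW_p\, C_p(z)$ for some invertible matrix $C_p(z)$ independent of $q$. Because the canonical eigenvalues $u(\lambda;q)$ are pairwise distinct on $\mathcal{U}$ and the conditions \eqref{t_condition_z}--\eqref{real_condition_z} ensure that $\mathfrak{R}$ is non-Stokes, the classical theory of formal fundamental solutions of meromorphic linear systems with distinct leading eigenvalues forces $C_p(z)$ to be formally diagonal; consequently $S$ itself admits an asymptotic expansion $S \sim \mathsf{R}_p\, e^{\mathsf{u}/z}$ with $\mathsf{R}_p := \WW_p\cdot D_p(z)$ having coefficients analytic in $q$ on $\mathcal{U}_p$. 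Compactness of $\gamma$ yields a finite subcover; uniqueness of asymptotic expansions in a fixed direction forces the $\mathsf{R}_p$ to agree on overlaps, so they glue to a single operator-valued series $\mathsf{R}$ on a neighborhood $\mathcal{U}' \subset \mathcal{U}$ of $\gamma$.

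Next, Proposition \ref{prop:asymp_S} identifies $\mathsf{R}\big|_{q=0} = \mathsf{R}^{\mathsf{Hilb}}\big|_{q=0}$, while both $\mathsf{R}\, e^{\mathsf{u}/z}$ and $\mathsf{R}^{\mathsf{Hilb}}\, e^{\mathsf{u}/z}$ are formal fundamental solutions of the QDE on a neighborhood of $q=0$. The uniqueness of such formal solutions up to right multiplication by $q$-independent diagonal matrix series in odd powers of $z$ (in the spirit of Proposition \ref{kkqq22}), together with the matching at $q=0$, forces $\mathsf{R} = \mathsf{R}^{\mathsf{Hilb}}$ in a neighborhood of $q=0$.

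Finally, for the symplectic property, commutativity of the quantum product makes $\mathsf{M}_D$ self-adjoint with respect to $\eta$; a direct QDE calculation then shows that $S(-z)^T \eta\, S(z)$ is $q$-constant on $\mathcal{U}$. Comparing with the asymptotic $S \sim \mathsf{R}\, e^{\mathsf{u}/z}$ and using the distinctness of the entries of $\mathsf{u}$ to rule out off-diagonal contributions (which would produce essential singularities under conjugation by $e^{\pm\mathsf{u}/z}$), I deduce that $\mathsf{R}(-z)^T \eta\, \mathsf{R}(z)$ is a diagonal, $q$-constant formal $z$-series on $\mathcal{U}'$. On the subdomain near $q=0$ where $\mathsf{R} = \mathsf{R}^{\mathsf{Hilb}}$, the Givental--Teleman symplecticity of $\mathsf{R}^{\mathsf{Hilb}}$ gives $\mathsf{R}(-z)^T \eta\, \mathsf{R}(z) = \eta$; by the $q$-constancy, this extends throughout $\mathcal{U}'$, which is precisely $\mathsf{R}^\dagger(-z) \mathsf{R}(z) = \Id$. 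The main obstacle will be the first step: the non-Stokes direction conditions \eqref{t_condition_z}--\eqref{real_condition_z} on $\mathfrak{R}$ must be used in an essential way to guarantee that $S$ itself (and not merely the auxiliary $\WWW_p$) admits a global asymptotic expansion of the required form along the full arc $\gamma$.
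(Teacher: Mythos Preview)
Your overall strategy---cover $\gamma$ by the neighborhoods of Proposition~\ref{prop:sol_w_asym}, compare $S$ to the local fundamental solutions $\WWW_p$, and glue---is the paper's strategy, and your treatment of the symplectic condition matches the shortcut the paper records in a footnote. However, there is a genuine gap in your first step.

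You write $S = \WWW_p\, C_p(z)$ and assert that Stokes-type considerations force $C_p(z)$ to be formally diagonal. This does not follow. The uniqueness statement you are invoking says that \emph{formal} fundamental solutions of the shape $\WW\, e^{\mathsf{u}/z}$ are unique up to $q$-independent diagonal right multiplication; it says nothing about the connection matrix between an arbitrary actual fundamental solution $S$ and the sectorial solution $\WWW_p$. For a generic fundamental solution $S' = \WWW_p N$ with $N$ a fixed non-diagonal matrix, the connection matrix is $N$, and $S'$ admits no asymptotic of the form $\mathsf{R}\, e^{\mathsf{u}/z}$ at all. At this stage your argument uses no property of $S$ beyond its being \emph{some} fundamental solution, so it cannot yield the claimed conclusion. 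The conditions \eqref{t_condition_z}--\eqref{real_condition_z} do not help here: they are imposed so that Stirling's formula applies to the Gamma factors in $S$ and so that the analytic simplification at $q=0$ in Proposition~\ref{prop:sol_w_asym} goes through, not to make arbitrary connection matrices diagonal.

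The paper closes this gap by using Proposition~\ref{prop:asymp_S} as the \emph{starting} input rather than as an after-the-fact identification. That proposition is an explicit Stirling computation, using the concrete definition $S=(2\pi)^{|\cdot|}\Theta\mathsf{Y}_z\mathsf{G}_{\mathrm{DT}z}A$, which shows $S e^{-\mathsf{u}/z}\big|_{q=0}\sim \mathsf{R}^{\mathsf{Hilb}}\big|_{q=0}$ along $\mathfrak{R}$. This is the anchor: on $\mathcal{U}_0$ one writes $SC=\WWW$ with $C$ independent of $q$, and evaluating at $q=0$ gives $C\sim (\mathsf{R}^{\mathsf{Hilb}}|_{q=0})^{-1}(\WW|_{q=0})$, which is diagonal because Givental's uniqueness (applied to the two \emph{formal} solutions $\WW\,e^{\mathsf{u}/z}$ and $\mathsf{R}^{\mathsf{Hilb}}e^{\mathsf{u}/z}$) makes $(\mathsf{R}^{\mathsf{Hilb}})^{-1}\WW$ diagonal and $q$-independent. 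Since $C$ is $q$-independent, this determines its asymptotics throughout $\mathcal{U}_0$, whence $S\sim \mathsf{R}^{\mathsf{Hilb}} e^{\mathsf{u}/z}$ there. The induction then propagates along the finite cover by choosing a basepoint $p_l\in\mathcal{U}_{<l}\cap\mathcal{U}_l$ where the asymptotics of $S$ are already established and repeating the comparison. In short, Proposition~\ref{prop:asymp_S} is not a consistency check but the seed that makes the propagation possible; you have it in the wrong place in the argument.
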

\begin{proof}
After intersection, we may assume the two open sets $\mathcal{U}_0$ in Propositions \ref{prop:asym_sol} and \ref{prop:sol_w_asym} are the same.

Consider the open sets $\mathcal{U}_q$ of Proposition \ref{prop:sol_w_asym}. Certainly $\gamma\subset \bigcup_{q\in \gamma} \mathcal{U}_q$. Since $\gamma$ is compact, there exist finitely
many points
$$t_0=0< t_1<t_2<...< t_k=1$$
for which $\gamma\subset \bigcup_{i=0}^k\mathcal{U}_{\gamma(t_i)}$. Define $$\mathcal{U}_i=\mathcal{U}_{\gamma(t_i)}.$$

Consider the solution $\WWW$ over $\mathcal{U}_0$ constructed in Proposition \ref{prop:sol_w_asym}. The asymptotics of $\WWW$ as $z\to 0$ along $\mathfrak{R}$, $\WW e^{\mathsf{u}/z}$ is an asymptotical solution of \eqref{qde_z}. We compare  $\WW e^{\mathsf{u}/z}$ 
with the asymptotical solution (\ref{eqn:asym_sol}). By 
\cite[Remark 1 after the proof of Proposition 1.1]{g1}, $\WW$ and $\mathsf{R}^{\mathsf{Hilb}}$ differ by a $z$-series with coefficients given by diagonal 
matrices\footnote{Here, all operators are represented by matrices in the canonical basis. Since both 
$e^{\mathsf{u}/z}$ and
$\left(\mathsf{R}^{\mathsf{Hilb}}\Big|_{q=0}\right)^{-1} \left(\WW\Big|_{q=0}\right)$ 
are diagonal, these matrices
commute.} 
independent of $q$. Therefore, $$\WW=\mathsf{R}^{\mathsf{Hilb}}\left(\mathsf{R}^{\mathsf{Hilb}}\Big|_{q=0}\right)^{-1} \left(\WW\Big|_{q=0}\right)$$ and $\left(\mathsf{R}^{\mathsf{Hilb}}\Big|_{q=0}\right)^{-1} \left(\WW\Big|_{q=0}\right)$ is diagonal.

The solution $S$ in (\ref{eqn:sol}) is a fundamental solution to (\ref{qde_z}). Hence, there exists a matrix $C$, independent of $q$, satisfying
$$SC=\WWW$$ on $\mathcal{U}_0$. The asymptotics of $S\Big|_{q=0}$ as $z\to 0$ along $\mathfrak{R}$ were calculated in Proposition \ref{prop:asymp_S}. 
After comparing with the asymptotics of $\WWW\Big|_{q=0}$, we find  $$C\sim \left(\mathsf{R}^{\mathsf{Hilb}}\Big|_{q=0}\right)^{-1} \left(\WW\Big|_{q=0}\right)$$ as $z\to 0$ along $\mathfrak{R}$. Therefore, on $\mathcal{U}_0$, we find 
\begin{equation*}
S=\WWW C^{-1}\ \ \sim \ \  \WW e^{\mathsf{u}/z} \left(\WW\Big|_{q=0}\right)^{-1}\left(\mathsf{R}^{\mathsf{Hilb}}\Big|_{q=0}\right)
=  \WW \left(\WW\Big|_{q=0}\right)^{-1}
\left(\mathsf{R}^{\mathsf{Hilb}}\Big|_{q=0}\right) e^{\mathsf{u}/z}
= \mathsf{R}^\mathsf{Hilb} e^{\mathsf{u}/z}
\end{equation*}
as $z\to 0$ along $\mathfrak{R}$.

Suppose now (\ref{eqn:S_asymp_exp}) is proven over $\mathcal{U}_{<l}=\bigcup_{i=0}^{l-1} \mathcal{U}_i$. Consider the solution $\WWW$ over $\mathcal{U}_l$ constructed in Proposition \ref{prop:sol_w_asym}. The asymptotics of $\WWW$ as $z\to 0$ along $\mathfrak{R}$, $\WW e^{\mathsf{u}/z}$, is an asymptotical solution of
\eqref{qde_z} on $\mathcal{U}_l$. We compare
$\WW e^{\mathsf{u}/z}$
with the asymptotical solution \eqref{eqn:S_asymp_exp} over $\mathcal{U}_{<l}\cap\mathcal{U}_l$. As before, 
$\WW$ and $\mathsf{R}$ differ by a $z$-series with coefficients diagonal matrices independent of $q$. Let $p_l\in  \mathcal{U}_{<l}\cap\mathcal{U}_l$. Then, over $\mathcal{U}_{<l}\cap\mathcal{U}_l$,
we have 
$$\WW=\mathsf{R}
\left(\mathsf{R}\Big|_{q=p_l}\right)^{-1} \left(\WW\Big|_{q=p_l}\right)\,
.$$ 
Moreover, $\left(R\Big|_{q=p_l}\right)^{-1} \left(\WW\Big|_{q=p_l}\right)$ is a diagonal matrix. As before, there exists a matrix $C$, independent of $q$, satisfying $$SC=\WWW$$ on 
$\mathcal{U}_{<l}\cap\mathcal{U}_l$. Comparing asymptotics at $q=p_l$, we find 
$$C\sim \left(\mathsf{R}\Big|_{q=p_l}\right)^{-1} \left(\WW\Big|_{q=p_l}\right)$$ as $z\to 0$ along $\mathfrak{R}$.

On $\mathcal{U}_l$, define $\mathsf{R}=\WW\left(\WW\Big|_{q=p_l}\right)^{-1}\left(\mathsf{R}\Big|_{q=p_l}\right)$.  Over $\mathcal{U}_{<l}\cap \mathcal{U}_l$, we have 
\begin{equation*}
S=\WWW C^{-1}\ \sim \  \WW e^{\mathsf{u}/z} \left(\WW\Big|_{q=p_l}\right)^{-1}\left(\mathsf{R}\Big|_{q=p_l}
\right)
=  \WW \left(\WW\Big|_{q=p_l}^{-1}\right)\left(\mathsf{R}\Big|_{q=p_l}\right)e^{\mathsf{u}/z}
=\mathsf{R} e^{\mathsf{u}/z} 
\end{equation*}
as $z\to 0$  along  $\mathfrak{R}$.
We have proven \eqref{eqn:S_asymp_exp} over $\mathcal{U}_{<l}\cup \mathcal{U}_l$.

Finally, we prove
the constructed $\mathsf{R}$ is symplectic\footnote{An easier way to see the symplectic property of $\mathsf{R}$ is the following: being symplectic is a closed condition in $q$, and $\mathsf{R}$ is symplectic on $\mathcal{U}_0$ because $\mathsf{R}$ is $\mathsf{R}^\text{Hilb}$ on $\mathcal{U}_0$.
However, we include
a detailed calculation
to verify the symplectic
condition to show how all
the formula fit together.}. We compute  $S^\dagger(-z)S(z)$ in two ways. By \eqref{eqn:S_asymp_exp}, as $z\to 0$ along $\mathfrak{R}$, we find 
\begin{equation*}
S^\dagger(-z)S(z) \ \sim\  e^{-\mathsf{u}^\dagger/z}
\mathsf{R}^\dagger(-z) \mathsf{R}(z)e^{\mathsf{u}/z}
=e^{-\mathsf{u}/z}\mathsf{R}^\dagger(-z) \mathsf{R}(z)e^{\mathsf{u}/z}\, .
\end{equation*} 
On the other hand, using the definition of $S$ in 
\eqref{eqn:sol} and the matrix $L$, we find
\begin{equation*}
    S(z)=(2\pi)^{|\cdot|}\Theta\mathsf{Y}_z\mathsf{G}_{\text{DT}z}A=(\Theta\mathsf{Y}_zL^{-1})(2\pi)^{|\cdot|}L\mathsf{G}_{\text{DT}z}A\, .
\end{equation*}
%By (\ref{eqn:length_Y}) and the definition of the Hermition product $\langle -, -\rangle_H$, we see that columns of  $\Theta\mathsf{Y}_zL^{-1}$ are orthonormal. 
By direct calculation, detailed in Section \ref{subsec:scalar_prod}, we have 
\begin{equation}\label{eqn:Y_pairings}
\langle \Theta\mathsf{Y}^\lambda_z z^{|\lambda|}, \Theta\mathsf{Y}^\mu_z z^{|\mu|}\rangle_H=\eta\left(\Theta\mathsf{Y}^\lambda_z z^{|\lambda|}, (\Theta\mathsf{Y}^\mu_z z^{|\mu|})\Big|_{z\mapsto -z}\right) =\delta_{\lambda\mu}\prod_{\mathsf{w}: \text{ tangent weight at }\lambda} \mathsf{w}\, .
\end{equation}
Hence, $$(\Theta\mathsf{Y}_{z}L^{-1})^\dagger\Big|_{z\mapsto -z}(\Theta\mathsf{Y}_zL^{-1})=\Id\, .$$
Then,
\begin{equation*}
\begin{split}
S^\dagger(-z)S(z)&=((2\pi)^{|\cdot|}L\mathsf{G}_{\text{DT}z}A)^\dagger\Big|_{z\mapsto -z}(\Theta\mathsf{Y}_zL^{-1})^\dagger\Big|_{z\mapsto -z}(\Theta\mathsf{Y}_zL^{-1})((2\pi)^{|\cdot|}L\mathsf{G}_{\text{DT}z}A)\\
&=((2\pi)^{|\cdot|}L\mathsf{G}_{\text{DT}z}A)^\dagger\Big|_{z\mapsto -z}((2\pi)^{|\cdot|}L\mathsf{G}_{\text{DT}z}A)\, .
\end{split}
\end{equation*}

By the analysis of asymptotics of $\mathsf{G}_{\text{DT}}$ in the proof of Proposition \ref{prop:asymp_S},  we have the following as 
$z\to 0$ along $\mathfrak{R}$:
\begin{equation*}
\begin{split}
(2\pi)^{|\cdot|}L\mathsf{G}_{\text{DT}z}A\sim \text{Diag}\left(q^{-c(\lambda; t_1, t_2)/z}\prod_{\mathsf{w}: \text{ tangent weights at } \lambda} \exp\left(\sum_{m>0}\frac{B_{2m}}{2m(2m-1)}\left(\frac{-z}{\mathsf{w}}\right)^{2m-1}\right)\right),\\
((2\pi)^{|\cdot|}L\mathsf{G}_{\text{DT}z}A)^\dagger\Big|_{z\mapsto -z}\sim \text{Diag}\left(q^{c(\lambda; t_1, t_2)/z}\prod_{\mathsf{w}: \text{ tangent weights at } \lambda} \exp\left(\sum_{m>0}\frac{B_{2m}}{2m(2m-1)}\left(\frac{z}{\mathsf{w}}\right)^{2m-1}\right)\right)\, .
\end{split}    
\end{equation*}
We conclude $S^\dagger(-z)S(z)\sim \Id$. 

Comparing the two asymptotics of $S^\dagger(-z)S(z)$, we find $$\exp(-\mathsf{u}/z)\mathsf{R}^\dagger(-z)\mathsf{R}(z)\exp(\mathsf{u}/z)=\Id\, ,$$ which implies $\mathsf{R}^\dagger(-z)\mathsf{R}(z)=\Id$.
\end{proof}

\subsection{Calculations of scalar products}\label{subsec:scalar_prod}

We first check (\ref{eqn:jack_pairings}).
Write $$\mathsf{J}^\lambda=\sum_\epsilon \mathsf{J}^\lambda_\epsilon(t_1, t_2) |\epsilon\rangle\, ,\quad \mathsf{J}^\mu=\sum_{\epsilon'}\mathsf{J}^\mu_{\epsilon'}(t_1, t_2) |\epsilon\rangle\, ,$$
where $\mathsf{J}^\lambda_\epsilon(t_1, t_2), \mathsf{J}^\mu_{\epsilon'}(t_1, t_2)\in \mathbb{C}(t_1, t_2)[[q]]$.
Then 
\begin{equation}\label{eqn0}
    \begin{split}
    \<\mathsf{J}^\lambda, \mathsf{J}^\mu \>_H
    =&\sum_{\epsilon, \epsilon'}\mathsf{J}^\lambda_\epsilon(t_1, t_2)\overline{\mathsf{J}^\mu_{\epsilon'}(t_1,t_2)}\<\epsilon|\epsilon'\>_H\\
    =&\sum_{\epsilon, \epsilon'}\mathsf{J}^\lambda_\epsilon(t_1, t_2){\mathsf{J}^\mu_{\epsilon'}(-t_1,-t_2)}\frac{1}{(t_1t_2)^{\ell(\epsilon)}}\frac{\delta_{\epsilon\epsilon'}}{\mathfrak{z}(\epsilon)}\, .
    \end{split}
\end{equation}
Since $\mathsf{J}^\mu_{\epsilon'}(t_1,t_2)$ is $(t_1t_2)^{\ell(\epsilon')}$ times a polynomial in $t_1$ and $t_2$ of degree $|\mu|-\ell(\epsilon')$, we have $$\mathsf{J}^\mu_{\epsilon'}(-t_1,-t_2)=(-1)^{2\ell(\epsilon')}(-1)^{|\mu|-\ell(\epsilon')}\mathsf{J}^\mu_{\epsilon'}(t_1,t_2)\, .$$
We can therefore
write \eqref{eqn0} as
\begin{equation*}
    \begin{split}
    &\sum_{\epsilon, \epsilon'}\mathsf{J}^\lambda_\epsilon(t_1, t_2){\mathsf{J}^\mu_{\epsilon'}(t_1,t_2)}\frac{(-1)^{|\mu|-\ell(\epsilon')}}{(t_1t_2)^{\ell(\epsilon)}}\frac{\delta_{\epsilon\epsilon'}}{\mathfrak{z}(\epsilon)}\\
    =&\sum_{\epsilon, \epsilon'}\mathsf{J}^\lambda_\epsilon(t_1, t_2){\mathsf{J}^\mu_{\epsilon'}(t_1,t_2)}\frac{(-1)^{|\epsilon|-\ell(\epsilon)}}{(t_1t_2)^{\ell(\epsilon)}}\frac{\delta_{\epsilon\epsilon'}}{\mathfrak{z}(\epsilon)}\\
    =&\eta(\mathsf{J}^\lambda, \mathsf{J}^\mu)\, ,
    \end{split}
\end{equation*}
where, in the first equality, we have used $|\mu|=|\epsilon|$.

We now check (\ref{eqn:Y_pairings}). 
Write $$\mathsf{Y}^\lambda=\sum_\epsilon \mathsf{Y}^\lambda_\epsilon(t_1, t_2) |\epsilon\rangle\, ,\quad \mathsf{Y}^\mu=\sum_{\epsilon'}\mathsf{Y}^\mu_{\epsilon'}(t_1, t_2) |\epsilon'\rangle\, ,$$
where $\mathsf{Y}^\lambda_\epsilon(t_1, t_2), \mathsf{Y}^\mu_{\epsilon'}(t_1, t_2)\in \mathbb{C}(t_1, t_2)[[q]]$.
Then, 
\begin{equation*}
\begin{split}
\langle \Theta\mathsf{Y}^\lambda_z z^{|\lambda|}, \Theta\mathsf{Y}^\mu_z z^{|\mu|}\rangle_H
=&z^{|\lambda|+|\mu|}\sum_{\epsilon, \epsilon'} \mathsf{Y}^\lambda_\epsilon\left(\frac{t_1}{z}, \frac{t_2}{z}\right)\overline{\mathsf{Y}^\mu_{\epsilon'}\left(\frac{t_1}{z}, \frac{t_2}{z}\right)}\langle \Theta\epsilon|\Theta\epsilon'\rangle_H\\
=&z^{|\lambda|+|\mu|}\sum_{\epsilon, \epsilon'} \mathsf{Y}^\lambda_\epsilon\left(\frac{t_1}{z}, \frac{t_2}{z}\right){\mathsf{Y}^\mu_{\epsilon'}\left(-\frac{t_1}{z}, -\frac{t_2}{z}\right)}\langle \epsilon|\epsilon'\rangle_H z^{\ell(\epsilon)+\ell(\epsilon')}\\
=&z^{|\lambda|+|\mu|}\sum_{\epsilon, \epsilon'} \mathsf{Y}^\lambda_\epsilon\left(\frac{t_1}{z}, \frac{t_2}{z}\right){\mathsf{Y}^\mu_{\epsilon'}\left(-\frac{t_1}{z}, -\frac{t_2}{z}\right)} \frac{z^{\ell(\epsilon)+\ell(\epsilon')}}{(t_1t_2)^{\ell(\epsilon)}}\frac{\delta_{\epsilon\epsilon'}}{\mathfrak{z}(\epsilon)}\, .
\end{split}
\end{equation*}
We have 
\begin{equation}\label{eqn99}
    \begin{split}
    &\sum_{\epsilon, \epsilon'} \mathsf{Y}^\lambda_\epsilon\left(\frac{t_1}{z}, \frac{t_2}{z}\right){\mathsf{Y}^\mu_{\epsilon'}\left(-\frac{t_1}{z}, -\frac{t_2}{z}\right)} \frac{z^{\ell(\epsilon)+\ell(\epsilon')}}{(t_1t_2)^{\ell(\epsilon)}}\frac{\delta_{\epsilon\epsilon'}}{\mathfrak{z}(\epsilon)}\\
    =&\sum_{\epsilon, \epsilon'} \mathsf{Y}^\lambda_\epsilon\left(\frac{t_1}{z}, \frac{t_2}{z}\right){\mathsf{Y}^\mu_{\epsilon'}\left(-\frac{t_1}{z}, -\frac{t_2}{z}\right)} \frac{1}{(\frac{t_1}{z}\frac{t_2}{z})^{\ell(\epsilon)}}\frac{\delta_{\epsilon\epsilon'}}{\mathfrak{z}(\epsilon)}\\
    =&\langle \mathsf{Y}^\lambda, \mathsf{Y}^\mu\rangle_H\Big|_{t_i\mapsto t_i/z}\\
    =&\delta_{\lambda\mu}\prod_{\mathsf{w}: \text{ tangent weight at }\lambda} \mathsf{w}\Big|_{t_i\mapsto t_i/z}\\
    =&\delta_{\lambda\mu}\prod_{\mathsf{w}: \text{ tangent weight at }\lambda} \mathsf{w}/z^{2|\lambda|}\, .
    \end{split}
\end{equation}
In the second equality of
\eqref{eqn99}, 
we have used the definition of $\langle-,-\rangle_H$. In the third equality, we have used
\eqref{eqn:jack_lengths} and
\eqref{eqn:length_Y}.
Since $|\lambda|=|\mu|$, we have  $$\langle \Theta\mathsf{Y}^\lambda_z z^{|\lambda|}, \Theta\mathsf{Y}^\mu_z z^{|\mu|}\rangle_H=\delta_{\lambda\mu}\prod_{\mathsf{w}: \text{ tangent weight at }\lambda} \mathsf{w}\, .$$

On the other hand, we have
\begin{equation*}
    \begin{split}
    &\eta\left(\Theta\mathsf{Y}^\lambda_z z^{|\lambda|}, (\Theta\mathsf{Y}^\mu_z z^{|\mu|})\Big|_{z\mapsto -z}\right)\\
=&z^{|\lambda|+|\mu|}
(-1)^{|\mu|}
\sum_{\epsilon, \epsilon'} \mathsf{Y}^\lambda_\epsilon\left(\frac{t_1}{z}, \frac{t_2}{z}\right){\mathsf{Y}^\mu_{\epsilon'}\left(-\frac{t_1}{z}, -\frac{t_2}{z}\right)}\eta( \epsilon|\epsilon') z^{\ell(\epsilon)+\ell(\epsilon')}(-1)^{\ell(\epsilon')}\\
=&z^{|\lambda|+|\mu|}\sum_{\epsilon, \epsilon'} \mathsf{Y}^\lambda_\epsilon\left(\frac{t_1}{z}, \frac{t_2}{z}\right){\mathsf{Y}^\mu_{\epsilon'}\left(-\frac{t_1}{z}, -\frac{t_2}{z}\right)}
\frac{(-1)^{|\epsilon|-\ell(\epsilon)}}{(t_1t_2)^{\ell(\epsilon)}}\frac{\delta_{\epsilon\epsilon'}}{\mathfrak{z}(\epsilon)}
z^{\ell(\epsilon)+\ell(\epsilon')}(-1)^{\ell(\epsilon')}(-1)^{|\mu|}.
    \end{split}
\end{equation*}
Since the factors of $(-1)$ cancel, the above is
\begin{equation*}
%    \begin{split}&
    z^{|\lambda|+|\mu|}\sum_{\epsilon, \epsilon'} \mathsf{Y}^\lambda_\epsilon\left(\frac{t_1}{z}, \frac{t_2}{z}\right){\mathsf{Y}^\mu_{\epsilon'}\left(-\frac{t_1}{z}, -\frac{t_2}{z}\right)}
\frac{z^{\ell(\epsilon)+\ell(\epsilon')}}{(t_1t_2)^{\ell(\epsilon)}}\frac{\delta_{\epsilon\epsilon'}}{\mathfrak{z}(\epsilon)}
=\delta_{\lambda\mu}\prod_{\mathsf{w}: \text{ tangent weight at }\lambda} \mathsf{w}\, ,
\end{equation*}
    where we have used \eqref{eqn99}. We conclude 
$$\eta\left(\Theta\mathsf{Y}^\lambda_z z^{|\lambda|}, (\Theta\mathsf{Y}^\mu_z z^{|\mu|})\Big|_{z\mapsto -z}\right)=\delta_{\lambda\mu}\prod_{\mathsf{w}: \text{ tangent weight at }\lambda} \mathsf{w}\, .$$

\section{Analytic continuation}

\subsection{Asymptotics near $q=-1$}
We study the value of $S$ at $q=-1$, using the solution to the connection problem in \cite[Section 4]{op29}. Let $$\mathsf{H}^\lambda(q,t)$$ be the integral form of the Macdonald polynomial as in \cite[Equation (33)]{op29}. More precisely,
$$\mathsf{H}^\lambda(q,t)=t^{\mathsf{n}(\lambda)}\prod_{\square\in \lambda} (1-q^{a(\square)}t^{-l(\square)-1}){\Upsilon}\mathsf{P}^\lambda(q, t^{-1})\, ,$$
where $$\Upsilon |\mu\rangle =\prod_{i=1}^{\ell(\mu)} (1-t^{-\mu_i})^{-1} |\mu\rangle\, $$
and $$\mathsf{n}(\lambda)=\sum_{i=1}^{\ell(\lambda)} (i-1)\lambda_i\, .$$

Let $\mathsf{H}$ be the matrix with columns $\mathsf{H}^\lambda$ and the following identification of parameters: 
\begin{equation}\label{eqn:parameters}
(q, t)=(T_1, T_2), \quad T_i=\exp(2\pi\sqrt{-1}t_i)\, .
\end{equation}
Define the operators $\mathsf{G}_{\text{GW}}$ and $\mathbf{\Gamma}$ by 
\begin{eqnarray*}
\mathsf{G}_{\text{GW}}(t_1, t_2)|\mu\rangle&=&\prod_i g(\mu_i, t_1)g(\mu_i, t_2)|\mu \rangle\, ,\\ \mathbf{\Gamma} |\mu \rangle&=&\frac{(2\pi\sqrt{-1})^{\ell(\mu)}}{\prod_i \mu_i}\mathsf{G}_{\text{GW}}(t_1, t_2)|\mu\rangle\, ,
\end{eqnarray*}
where $g(x, t)=x^{tx}/\Gamma(tx)$.

By \cite[Theorem 4]{op29}, $\mathsf{Y}\mathsf{G}_{\text{DT}}|_{q=-1}=\frac{1}{(2\pi\sqrt{-1})^{|\cdot|}}\mathbf{\Gamma}\mathsf{H}$. Therefore, the solution $S$ in (\ref{eqn:sol}) satisfies 
\begin{equation}\label{eqn:sol_-1}
S|_{q=-1}=\frac{1}{\sqrt{-1}^{|\cdot|}}\Theta\mathbf{\Gamma}_z\mathsf{H}_zA\, .
\end{equation}
By Proposition \ref{prop:sol_w_asym2}, 
\begin{equation}\label{eqn:s_1_asymp}
S|_{q=-1}\sim \mathsf{R}\Big|_{q=-1}e^{\mathsf{u}/z}\Big|_{q=-1}
\end{equation}
as $z\to 0$ along $\mathfrak{R}$. We will determine $\mathsf{R}\Big|_{q=-1}$ by studying the asymptotics of the right side of (\ref{eqn:sol_-1}). 

As we will see, as $z\to 0$ along $\mathfrak{R}$, the right side of (\ref{eqn:sol_-1}) admits an asymptotical expansion of the following form 
\begin{equation}\label{eqn:s_1_asymp2}
\frac{1}{\sqrt{-1}^{|\cdot|}}\Theta\mathbf{\Gamma}_z\mathsf{H}_zA\sim \mathsf{Z}^+\mathsf{Z}^-,
\end{equation}
where $\mathsf{Z}^+=\Id+O(z)$ is a $z$-series and $\mathsf{Z}^-=\Id+O(1/z)$ is a $1/z$-series. 

We write $A=A_0A_1$, where
\begin{center}
\begin{tabular}[c]{|l|r|}
\hline
{Matrix} & {Eigenvalues} \\
\hline
$A_0$ & $\prod_{\mathsf{w}: \text{ tangent weights at $\lambda$}}z^{-\mathsf{w}/z}e^{-\mathsf{w}/z}$\\
\hline
$A_1$ & $\prod_{\mathsf{w}: \text{ tangent weights at $\lambda$}}\mathsf{w}^{\mathsf{w}/z}$\\
\hline
\end{tabular}
\end{center}
The operator $A_0$ is a scalar multiple of the identity matrix 
since
$$\sum_{\mathsf{w}: \text{ tangent weights}}\mathsf{w}=|\lambda|(t_1+t_2)\, .$$ 
As a result,
\begin{equation}\label{eqn:sol_-12}
S|_{q=-1}=\frac{1}{\sqrt{-1}^{|\cdot|}}\Theta\mathbf{\Gamma}_zA_0\mathsf{H}_zA_1\, .
\end{equation}
Since we have
$$\prod_{\mathsf{w}: \text{ tangent weights at $\lambda$}}\mathsf{w}^{\mathsf{w}/z}=\exp\left(\frac{1}{z}\sum_{\mathsf{w}: \text{ tangent weights at $\lambda$}}\mathsf{w}\log \mathsf{w}\right)\,, $$ $A_1$ contributes to $\mathsf{Z}^-$.

By \cite[Chapter VI, equation (8.19)]{mac}, matrix coefficients of $\mathsf{H}$ are polynomials in $T_1, T_2$. As mentioned in \cite{op29}, our $\mathsf{H}^\lambda$  is the same as $\widetilde{H}_\lambda$ in \cite[Definition 3.5.2]{haiman_cdm}. To see the equivalence, the first
step is 
$$\widetilde{H}_\lambda(z;q,t)=t^{\mathsf{n}(\lambda)}J_\lambda[Z/(1-t^{-1}); q, t^{-1}]\, ,$$
by the equation just below Theorem/Definition 6.1 of \cite{haiman_sym2001}. Here, $Z/(1-t^{-1})$ stands for the plethystic substitution defined in \cite[Section 3.3]{haiman_cdm}.  The function $J_\lambda(z;q, t)$, defined by equation (54) of \cite{haiman_cdm}, is the same\footnote{Note there is a typo in equation (54) of \cite{haiman_cdm}: the plethystic substitution should be $(1-t)Z$ instead of $(1-t^{-1})Z$.} as that defined in \cite[Chapter VI, Section 8, (8.3)]{mac}, as remarked just above Section 3.5.2 of \cite{haiman_cdm}. By \cite[Chapter VI, Section 8, (8.3)]{mac}, $$J_\lambda(z;q,t)=\prod_{\square\in \lambda}(1-q^{a(\square)}t^{l(\square)+1})\mathsf{P}^\lambda(z;q,t)\, .$$
Working through the definition given in  \cite[Section 3.3]{haiman_cdm}, we find that the plethystic substitution $$Z\mapsto Z/(1-t^{-1})$$ is equivalent to the map $|\mu\rangle \mapsto \frac{1}{\prod_i (1-t^{-\mu_i})}|\mu\rangle$. Thus $$\mathsf{H}^\lambda(q,t)=\widetilde{H}_\lambda(z;q,t).$$

By the identification of parameters (\ref{eqn:parameters}) and condition (\ref{real_condition_z}), we see that as $z\to 0$ along $\mathfrak{R}$, we have $q\to 0, t^{-1}\to 0$.
By \cite[Definition 3.5.3]{haiman_cdm}, we have 
$$\mathsf{H}^\lambda(q,t)=\sum_\mu \widetilde{K}_{\mu\lambda}(q,t)s_\mu,$$
where 
$\widetilde{K}_{\mu\lambda}(q,t)$ are the  {\em Kostka-Macdonald polynomials} (or $q,t$-Kostka coefficients) and
$$s_\mu=\sum_\nu\chi_\mu(\nu)|\nu\rangle$$
is the Schur function. 

By the discussion below Definition 3.5.3 of \cite{haiman_cdm}, we can define $K_{\mu\lambda}(q,t)$ by $$J_\lambda(z;q,t)=\sum_\mu K_{\mu\lambda}(q,t)s_\mu[Z/(1-t)].$$
As noted in the discussion below Definition 3.5.3 of \cite{haiman_cdm}, 
\begin{equation}\label{eqn:kostka}
\widetilde{K}_{\mu\lambda}(q,t)=t^{\mathsf{n}(\lambda)}K_{\mu\lambda}(q, t^{-1}).
\end{equation}
Therefore, we can write
\begin{equation*}
    \begin{split}
     \sum_\mu \widetilde{K}_{\mu\lambda}(q,t)s_\mu
   =&\sum_\mu \widetilde{K}_{\mu\lambda}(q,t)\sum_\nu \chi_\mu(\nu)|\nu\rangle\\
   =&\sum_\nu \left(\sum_\mu \widetilde{K}_{\mu\lambda}(q,t)\chi_\mu(\nu) \right)|\nu\rangle,
    \end{split}
\end{equation*}
and 
\begin{equation*}
\begin{split}
    \sum_\mu \widetilde{K}_{\mu\lambda}(q,t)\chi_\mu(\nu)
    =\sum_\mu t^{\mathsf{n}(\lambda)}K_{\mu\lambda}(q, t^{-1})\chi_\mu(\nu).
\end{split}
\end{equation*}

As $z\to 0$ along $\mathfrak{R}$, we have $q, t^{-1}\to 0$. By equation (59) of \cite{haiman_cdm}, $$\widetilde{K}_{\mu\lambda}(0, t)=\widetilde{K}_{\mu\lambda}(t),$$
where $\widetilde{K}_{\mu\lambda}(t)$ is the {\em cocharge Kostka-Foulkes polynomial} given in \cite[Definition 3.4.13]{haiman_cdm} in terms of the Kostka-Foulkes polynomials $K_{\mu\lambda}(t)$,
$$\widetilde{K}_{\mu\lambda}(t)=t^{\mathsf{n}(\lambda)}K_{\mu\lambda}(t^{-1}).$$
Comparing this with (\ref{eqn:kostka}), we see that $$K_{\mu\lambda}(0,t)=K_{\mu\lambda}(t).$$
By \cite[Corollary 3.4.12 (vi)]{haiman_cdm}, $$K_{\mu\lambda}(0)=\delta_{\mu\lambda}.$$
Therefore $$\lim_{z\to 0 \text{ along }\mathfrak{R}} K_{\mu\lambda}(q, t^{-1})=\delta_{\mu\lambda}.$$
Thus as $z\to 0$ along $\mathfrak{R}$, we have $\mathsf{X}^{-1}\mathsf{H}_z\exp(-2\pi\sqrt{-1}t_2\mathsf{n}/z)$ tends to $\Id$, where $\mathsf{X}$ is the matrix with entries $\chi_\lambda(\mu)$ and $\mathsf{n}$ is the diagonal matrix with diagonal entries $\mathsf{n}(\lambda)$. Hence,
\begin{equation}\label{eqn:asym_H}
\mathsf{H}_z\sim \mathsf{X}\exp(2\pi\sqrt{-1}t_2\mathsf{n}/z),
\end{equation}
as $z\to 0$ along $\mathfrak{R}$. So $\mathsf{H}_z$ contributes $\mathsf{X}$ to $\mathsf{Z}^+$ and $\exp(2\pi\sqrt{-1}t_2\mathsf{n}/z)$ to $\mathsf{Z}^-$.

It remains to study the asymptotics of $\frac{1}{\sqrt{-1}^{|\cdot|}}\Theta\mathbf{\Gamma}_zA_0$. By condition (\ref{t_condition_z}), the Stirling asymptotics (\ref{eqn:stirling}) is applicable to $\mathbf{\Gamma}_z$ as $z\to 0$ along $\mathfrak{R}$. We find $\mathbf{\Gamma}_z$ has the asymptotics a diagonal matrix with entries:
\begin{multline*}
\sqrt{-1}^{\ell(\mu)}({t_1t_2})^{\ell(\mu)/2}{z}^{-\ell(\mu)}e^{\frac{|\mu|(t_1+t_2)}{z}}\exp\left(-|\mu|\frac{t_1\log t_1+t_2\log t_2}{z} + |\mu|(t_1+t_2)\frac{\log z}{z}\right)\\
\times \prod_{i=1}^{\ell(\mu)} \exp\left(\sum_{m>0}\frac{B_{2m}}{2m(2m-1)}\left(\left(\frac{-z}{\mu_it_1}\right)^{2m-1}+\left(\frac{-z}{\mu_it_2}\right)^{2m-1}\right) \right)\, .
\end{multline*}
There are now several cancellations:
\begin{enumerate}
\item[$\bullet$]
 The term $e^{\frac{|\mu|(t_1+t_2)}{z}}$ cancels with $\prod_{\mathsf{w}: \text{ tangent weights at $\mu$}}e^{-\mathsf{w}/z}$ in $A_0$. 
 \item[$\bullet$]
 The term $\exp(|\mu|(t_1+t_2)\frac{\log z}{z})$ cancels with $\prod_{\mathsf{w}: \text{ tangent weights t $\mu$}}z^{-\mathsf{w}/z}$ in $A_0$. 
 \item[$\bullet$]
 The factor $z^{-\ell(\mu)}$ cancels with $\Theta$. 
 \end{enumerate}
 Also, the term $\exp(-|\mu|\frac{t_1\log t_1+t_2\log t_2}{z})$  contributes to $\mathsf{Z}^-$.
 
 Therefore, the part of $\mathsf{Z}^+$ coming from  $\frac{1}{\sqrt{-1}^{|\cdot|}}\Theta\mathbf{\Gamma}_zA_0$ is the diagonal matrix with entries 
 \begin{equation}\label{eqn:asym_coef2}
 \sqrt{-1}^{\ell(\mu)-|\mu|}(t_1t_2)^{\ell(\mu)/2}
 \prod_{i=1}^{\ell(\mu)}
 \exp\left(\sum_{m>0}\frac{B_{2m}}{2m(2m-1)}\left(\left(\frac{-z}{\mu_it_1}\right)^{2m-1}+\left(\frac{-z}{\mu_it_2}\right)^{2m-1}\right) \right).
 \end{equation}
 
 Comparing\footnote{What we need here is that if a function of the form $\exp(A/z)$, with $A$ a diagonal matrix independent of $z$, has an asymptotical expansion into a $z$-series starting with $\Id$, then $A=0$. The result follows by computing the asymptotical coefficients using their definitions.} the two asymptotical expansions of $S\Big|_{q=-1}$ in (\ref{eqn:s_1_asymp}) and (\ref{eqn:s_1_asymp2}), we find  $$\mathsf{R}\Big|_{q=-1}=\mathsf{Z}^+, \quad e^{\mathsf{u}/z}\Big|_{q=-1}=\mathsf{Z}^-.$$
Hence, $\mathsf{u}\Big|_{q=-1}$ is the diagonal matrix with diagonal entries $$-|\lambda|(t_1\log t_1+t_2\log t_2)+\sum_{\mathsf{w}: \text{ tangent weights at }\lambda} \mathsf{w}\log \mathsf{w}+2\pi\sqrt{-1}t_2\mathsf{n}(\lambda).$$ 

By construction, the columns of $\mathsf{R}\Big|_{q=-1, z=0}$ are normalized idempotents of the ring $$H_\mathsf{T}^*(I\text{Sym}^n(\mathbb{C}^2))$$ written in the basis $\{|\mu\rangle\, \mu\in \text{Part}(n)\}$ of $\mathsf{V}$. Since $\mathsf{R}\Big|_{q=-1}=\mathsf{Z}^+$, by looking at $\mathsf{Z}^+\Big|_{z=0}$, we find the idempotent appearing on the column indexed by $\lambda$ is $\mathsf{I}^\lambda$
written in the basis $\{|\mu\rangle\, \mu\in \text{Part}(n)\}$ of $\mathsf{V}$. Moreover, the action of $\mathsf{R}\Big|_{q=-1}$ on $\mathsf{I}^\lambda$ is given by the column of $\mathsf{Z}^+$ indexed by $\lambda$, which is computed by combining (\ref{eqn:asym_H}) and (\ref{eqn:asym_coef2}). More precisely,
  \begin{multline}\label{eqn:column1}
\mathsf{R}\Big|_{q=-1}(\mathsf{I}^\lambda)=\sum_{\mu}\chi_\lambda(\mu) \exp\left(-\sum_{m>0}\frac{B_{2m}}{2m(2m-1)} \sum_i\left(\frac{1}{(\mu_it_1)^{2m-1}}+\frac{1}{(\mu_it_2)^{2m-1}}\right)z^{2m-1} \right)\\
\cdot \sqrt{-1}^{\ell(\mu)-|\mu|} (t_1t_2)^{\ell(\mu)/2}\,
\Big|\mu\Big\rangle
\, .
\end{multline}
 %By \cite[Chapter I, Section 2, (2.13)]{mac} and \cite[Chapter I, Section 7, Example 2]{mac}, we have $$\chi_{\lambda'}(\mu)=(-1)^{|\mu|-\ell(\mu)}\chi_{\lambda}(\mu).$$
%With this in mind, comparing\footnote{Note that $(-1)^{|\mu|-\ell(\mu)}\sqrt{-1}^{\ell(\mu)-|\mu|}=\sqrt{-1}^{|\mu|-\ell(\mu)}$.} 

Comparing (\ref{eqn:R_sym0}) with (\ref{eqn:column1}), we find $$\mathsf{R}^\text{Sym}\Big|_{u=0}= \mathsf{R}\Big|_{q=-1}\, ,$$ after $-q=e^{iu}$.
As
a consequence, in a neighborhood of $q=-1$, $\mathsf{R}^{\text{Sym}}=\mathsf{R}$.

We have proven the following result
parallel to Proposition
\ref{prop:asymp_S}.

\begin{prop}\label{prop:asymp_S2}
As $z\to 0$ along $\mathfrak{R}$, the operator $Se^{\mathsf{-u}/z}|_{q=-1}$ has the asymptotics $$\mathsf{R}^{\text{\em Sym}}\Big|_{u=0}.$$
\end{prop}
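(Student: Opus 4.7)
The plan is to mirror the strategy used in Proposition \ref{prop:asymp_S}, but at the other endpoint $q=-1$ of the path $\gamma$, using the solution of the connection problem in \cite[Theorem 4]{op29} in place of the Jack polynomial expression at $q=0$. By that theorem we have
$$S\big|_{q=-1}=\frac{1}{\sqrt{-1}^{|\cdot|}}\,\Theta\,\mathbf{\Gamma}_z\,\mathsf{H}_z\,A,$$
and by Proposition \ref{prop:sol_w_asym2} we already know that
$S|_{q=-1}\sim \mathsf{R}|_{q=-1}\,e^{\mathsf{u}/z}|_{q=-1}$
as $z\to 0$ along $\mathfrak{R}$. So the task reduces to extracting an asymptotic factorization of the explicit right-hand side into a pure $z$-series $\mathsf{Z}^{+}=\mathbf{1}+O(z)$ times a pure $1/z$-series $\mathsf{Z}^{-}=\mathbf{1}+O(1/z)$, and then identifying $\mathsf{Z}^{+}$ with $\mathsf{R}^{\mathsf{Sym}}|_{u=0}$ through Proposition \ref{propsym}.

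First I would split $A=A_0 A_1$, noting $A_0$ is a scalar matrix (since the sum of the $2n$ tangent weights at $\lambda$ equals $|\lambda|(t_1+t_2)$) and $A_1$ contributes a pure $1/z$ diagonal factor to $\mathsf{Z}^{-}$. Next I would apply the Stirling asymptotic expansion \eqref{eqn:stirling} to $\mathbf{\Gamma}_z$; this is legitimate along $\mathfrak{R}$ thanks to \eqref{t_condition_z}. Three cancellations against $A_0$ and $\Theta$ occur: the $e^{|\mu|(t_1+t_2)/z}$ factor, the $\exp(|\mu|(t_1+t_2)\log z/z)$ factor, and the $z^{-\ell(\mu)}$ factor. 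What remains is a diagonal $\mathsf{Z}^{-}$ contribution $\exp(-|\mu|(t_1\log t_1+t_2\log t_2)/z)$ together with a diagonal $\mathsf{Z}^{+}$ contribution precisely of the form \eqref{eqn:asym_coef2}.

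Then I would handle $\mathsf{H}_z$ using the Macdonald-polynomial theory. Under the parameter identification $(q,t)=(T_1,T_2)$ with $T_i=\exp(2\pi\sqrt{-1}\,t_i/z)$, condition \eqref{real_condition_z} forces $q,t^{-1}\to 0$ as $z\to 0$ along $\mathfrak{R}$. The Kostka–Macdonald polynomials $\widetilde{K}_{\mu\lambda}(q,t)=t^{\mathsf{n}(\lambda)}K_{\mu\lambda}(q,t^{-1})$ reduce, by \cite[Cor.~3.4.12 and eq.~(59)]{haiman_cdm}, to $\delta_{\mu\lambda}$ in this limit, yielding
$$\mathsf{H}_z\sim \mathsf{X}\exp(2\pi\sqrt{-1}\,t_2\,\mathsf{n}/z),$$
where $\mathsf{X}$ is the $S_n$ character table and $\mathsf{n}=\text{diag}(\mathsf{n}(\lambda))$. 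The matrix $\mathsf{X}$ contributes to $\mathsf{Z}^{+}$, and $\exp(2\pi\sqrt{-1}\,t_2\,\mathsf{n}/z)$ adds to $\mathsf{Z}^{-}$.

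Finally I would assemble: the full asymptotics of the right-hand side of \eqref{eqn:sol_-1} takes the form $\mathsf{Z}^{+}\mathsf{Z}^{-}$ with $\mathsf{Z}^{+}=\mathbf{1}+O(z)$ and $\mathsf{Z}^{-}=\mathbf{1}+O(1/z)$. Matching with $\mathsf{R}|_{q=-1}\,e^{\mathsf{u}/z}|_{q=-1}$ and using the uniqueness of such a factorization (a function $\exp(A/z)$ with diagonal constant $A$ that admits an asymptotic expansion as a $z$-series starting with $\mathbf{1}$ forces $A=0$) identifies $\mathsf{R}|_{q=-1}=\mathsf{Z}^{+}$. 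Acting on the idempotent $\mathsf{I}^{\lambda}=\sum_\mu \chi_\lambda(\mu)\,(t_1t_2)^{\ell(\mu)/2}\,|\mu\rangle$ gives exactly formula \eqref{eqn:column1}, which by Proposition \ref{propsym} and the identification \eqref{eqn:mu_tilde} coincides with $\mathsf{R}^{\mathsf{Sym}}|_{u=0}$. The main obstacle is the bookkeeping of the $\mathsf{Z}^{+}/\mathsf{Z}^{-}$ separation and verifying the three cancellations among $\mathbf{\Gamma}_z$, $A_0$, and $\Theta$ so that the surviving diagonal $\mathsf{Z}^{+}$-piece is genuinely a Taylor series in $z$ starting at $\mathbf{1}$; once this is in place, the identification with $\mathsf{R}^{\mathsf{Sym}}|_{u=0}$ is a direct comparison of explicit Bernoulli-series diagonals.
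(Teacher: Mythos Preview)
Your proposal is correct and follows essentially the same route as the paper: the argument in Section~9.1 preceding the Proposition is precisely this analysis---splitting $A=A_0A_1$, applying Stirling to $\mathbf{\Gamma}_z$ with the three cancellations against $A_0$ and $\Theta$, reducing $\mathsf{H}_z$ via the Kostka--Macdonald/Kostka--Foulkes limit $K_{\mu\lambda}(0,0)=\delta_{\mu\lambda}$, and then matching the $\mathsf{Z}^+\mathsf{Z}^-$ factorization against $\mathsf{R}|_{q=-1}e^{\mathsf{u}/z}|_{q=-1}$ to identify $\mathsf{Z}^+$ with the expression \eqref{eqn:R_sym0} for $\mathsf{R}^{\mathsf{Sym}}|_{u=0}$. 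There is no substantive difference in strategy or in the key computations.
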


\subsection{Proof of Theorem \ref{crc}}
\label{thcrc}
We already have a match of the
complete genus 0 theory for
$\Hilb$ and $\Sym(\com^2)$
under the identification
\begin{equation}\label{p345}
\mathsf{V} \rightarrow \widetilde{\mathsf{V}}\,, \ \ \ \ \ \ \
|\mu\rangle \mapsto |\widetilde{\mu}\rangle\, .
\end{equation}
To prove Theorem \ref{crc},
we need only match the
$\mathsf{R}$-matrices $\mathsf{R}^{\mathsf{Hilb}}$
and $\mathsf{R}^{\mathsf{Sym}}$ via
\eqref{p345}  and the
variable change
\begin{equation} \label{ww99}
-q=e^{iu}\, .
\end{equation}
The coordinate $\widetilde{t}$
along $|2,1^{n-2}\rangle\in \widetilde{\mathsf{V}}$
is related to the
coordinate
$t$ along $|2,1^{n-2}\rangle
\in \mathsf{V}$ via \eqref{p345} by
$$\tilde{t}= (-i)^{-1}t= it\,.$$
By the chain rule,
$$-\frac{\partial}{\partial t}
= -i\frac{\partial}{\partial \widetilde{t}} \ \ \ \
\text{and} \ \ \ \
q\frac{\partial}{\partial q}
=-i \frac{\partial}{\partial u}\, .
$$
The differential equations
\begin{equation*}
-\frac{\partial}{\partial t} \mathsf{R}^{\mathsf{Hilb}} = q\frac{\partial}{\partial q} \mathsf{R}^{\mathsf{Hilb}} \ \ \  \
\text{and} \ \ \ \
\frac{\partial}{\partial \widetilde{t}}\, \mathsf{R}^{\mathsf{Sym}} = \frac{\partial}{\partial u} \mathsf{R}^{\mathsf{Sym}}\, 
\end{equation*}
therefore exactly match via \eqref{p345}
and the variable change \eqref{ww99}.
Hence, by Proposition \ref{vv55}, we need only match
$$
\mathsf{R}^{\mathsf{Hilb}}|_{q=-1}
=\left[\mathsf{R}^{\mathsf{Hilb}}|_{-q=e^{iu}}\right]_{u=0}$$
with $\mathsf{R}^{\mathsf{Sym}}|_{u=0}$.

The matching $\mathsf{R}^{\mathsf{Hilb}}|_{q=-1}
=\mathsf{R}^{\mathsf{Sym}}|_{u=0}$
is a non-trivial assertion.
The difficulty can be summarized
as follows. While we have closed forms for
$$\mathsf{R}^{\mathsf{Hilb}}|_{q=0}
\ \ \ \ \text{and} \ \ \ \
\mathsf{R}^{\mathsf{Sym}}|_{u=0}\, ,$$
by Propositions \ref{prophilb} and 
\ref{propsym} respectively, we must 
control the $q=-1$ evaluation
of $\mathsf{R}^{\mathsf{Hilb}}$
which is far away from $q=0$.

The issue is
resolved  by the analytic
continuation of the solution
to the QDE of $\Hilb$
computed in \cite{op29}.
The study of the QDE of $\Hilb$
in \cite{op29} concerns only
the {\em small} quantum cohomology
(all the coordinates of
$\mathsf{V}$ are set to 0).
The results of Proposition
\ref{prop:asymp_S} and Proposition \ref{prop:asymp_S2}
show the $\mathsf{R}$-matrix associated
to  the solution $S$ of the
QDE has asymptotics
$$\mathsf{R}^\mathsf{Hilb}\Big|_{q=0}= \mathsf{R}\Big|_{q=0}\ \ \ \ \text{and}\ \ \ \
\mathsf{R}^\mathsf{Sym}\Big|_{u=0}= \mathsf{R}\Big|_{q=-1}.$$
Since $\mathsf{R}$ matches
$\mathsf{R}^{\mathsf{Hilb}}$
for small $q$ 
by Proposition \ref{prop:sol_w_asym2}
and 
is analytic along
the path $\gamma$ connecting
$0$ to $-1$, we conclude
\begin{equation}\label{ggeedd}
\mathsf{R}^{\mathsf{Hilb}}|_{q=-1}
=\mathsf{R}^{\mathsf{Sym}}|_{u=0}
\end{equation}
at least when all the
coordinates of $\mathsf{V}$ are
set to 0. By Proposition 
\ref{prop:R_mat_sym}, any
difference between two operators
\eqref{ggeedd} persists after
setting the coordinates of
$\mathsf{V}$ to 0. Hence, the
equality \eqref{ggeedd} 
is valid when the dependence
on $\mathsf{V}$ is included. \qed

\vspace{8pt}
The proof of Theorem \ref{crc} not only yields
the series result  
$$\blang \mu^1, \mu^2, \ldots, \mu^r \brang_{g}^{\Hilb} =
(-i)^{\sum_{i=1}^r \ell(\mu^i)-|\mu^i|}\blang \mu^1, \mu^2, \ldots, \mu^r \brang_{g}^{{\Sym}(\mathbb{C}^2)}\ \ \ \ \ \text{after} \ \ \ -q=e^{iu}\, $$
but matches the full CohFTs 
$$\Omega_{g,r} = \widetilde{\Omega}_{g,r}
\ \ \ \ \ \text{after} \ \ \ \ \ \mathsf{V} \rightarrow 
\widetilde{\mathsf{V}} \ \ \ \text{and} \ \ \ -q=e^{iu}\,. $$

\subsection{Proof of Theorem \ref{tt22}}
We obtain a tetrahedron of 
equivalences of CohFTs:
\vspace{-15pt}
\begin{center}
\scriptsize
\begin{picture}(200,175)(-30,-50)
%\put(-100 ,-115 ){\line(1 ,0 ){340}}
%\put(-100 ,-115 ){\line(0 ,1 ){225}}
%\put(-100 ,110 ){\line(1 ,0 ){340}}
%\put(240 ,-115 ){\line(0 ,1 ){225}}
\thicklines
\put(25,25){\line(1,1){50}}
\put(25,25){\line(1,-1){50}}
\put(125,25){\line(-1,1){50}}
\put(125,25){\line(-1,-1){50}}
\put(75,-25){\line(0,1){100}}
\put(25,25){\line(1,0){45}}
\put(80,25){\line(1,0){45}}
\put(75,95){\makebox(0,0){ }}
\put(75,85){\makebox(0,0){$\Omega_{g,r}$ from $\Hilb$}}
\put(75,-35){\makebox(0,0){$\widetilde{\Omega}_{g,r}$
from $\Sym(\com^2)$}}
\put(75,-45){\makebox(0,0){}}
%\put(160,35){\makebox(0,0){Equivariant}}
\put(190,26){\makebox(0,0){${\Lambda}_{g,r}$ from DT theory of}}
\put(190,14){\makebox(0,0){$\pi:\mathbb{C}^2 \times 
\mathcal{C} \rightarrow
\overline{\mathcal{M}}_{g,r}
$}}
%\put(-15,35){\makebox(0,0){Equivariant}}
\put(-35,27){\makebox(0,0){$\widetilde{\Lambda}_{g,r}$ from 
GW theory of}}
\put(-35,14){\makebox(0,0){$\pi:\mathbb{C}^2 \times 
\mathcal{C} \rightarrow
\overline{\mathcal{M}}_{g,r}
$}}
\end{picture}
\end{center}
\normalsize
\vspace{5pt}

\noindent Both the CohFTs $\Omega$ and $\Lambda$ are
based on 
$$\mathsf{A}=\mathbb{Q}(t_1,t_2)[[q]]$$
and
$(\mathsf{V},\eta)$. The exact matching
$$\Omega=\Lambda$$
is Proposition \ref{pp15}. Similarly,
$\widetilde{\Omega}$ and $\widetilde{\Lambda}$ are
based on 
$$\widetilde{\mathsf{A}}=\mathbb{Q}(t_1,t_2)[[u]]$$
and
$(\widetilde{\mathsf{V}},\widetilde{\eta})$. The exact matching
$$\widetilde{\Omega}=\widetilde{\Lambda}$$
is Proposition \ref{pp12}. Finally, we have the 
crepant resolution matching
$$\Omega_{g,r} = \widetilde{\Omega}_{g,r}
\ \ \ \ \ \text{after} \ \ \ \ \ \mathsf{V} \rightarrow 
\widetilde{\mathsf{V}} \ \ \ \text{and} \ \ \ -q=e^{iu}\,$$
from Section \ref{thcrc}. As a result, we also 
obtain the GW/DT matching
$$\widetilde{\Lambda}_{g,r} = {\Lambda}_{g,r}
\ \ \ \ \ \text{after} \ \ \ \ \ \widetilde{\mathsf{V}} \leftarrow 
{\mathsf{V}} \ \ \ \text{and} \ \ \ -q=e^{iu}\,. $$
The proof of Theorem \ref{tt22} is complete. \qed

\end{document}